\documentclass[12pt]{article}

\usepackage{amsfonts}
\usepackage{amssymb}
\usepackage{amsmath}
\usepackage{amsthm}
\usepackage{mathptmx}
\usepackage[latin1]{inputenc}
\usepackage{txfonts}
\usepackage{mathrsfs}
\numberwithin{equation}{section}

\usepackage{a4}
\usepackage{mathrsfs}
\usepackage{fullpage}
\usepackage{color}
\usepackage[driverfallback=pdfmx,,colorlinks,bookmarksopen,bookmarksnumbered,citecolor=blue, urlcolor=blue]{hyperref}
\newenvironment{keywords}{
\list{}{\advance\topsep by0.35cm\relax\small
\leftmargin=1cm
\labelwidth=0.35cm
\listparindent=0.35cm
 \itemindent\listparindent
 \rightmargin\leftmargin}\item[\hskip\labelsep
 \bfseries Keywords:]}
 {\endlist}

\newtheorem{theorem}{Theorem}[section]
\newtheorem{lemma}[theorem]{Lemma}
\newtheorem{remark}[theorem]{Remark}
\newtheorem{pro}[theorem]{Proposition}

\newcommand{\N}{\mathbb{N}}

\newcommand{\A}{\mathscr{A}}
\newcommand{\B}{\mathscr{B}}

\newcommand{\Ar}{\sqrt{A}}
\newcommand{\He}{\mathcal{H}}

\begin{document}

\title{Well-posedness and exponential stability for abstract evolution equations with delay in the nonlinear source: frictional and viscoelastic cases}
 \date{}
\author{
  \textsc{Houria Chellaoua}\thanks{
    Laboratory of Pure and Applied Mathematics, University of Laghouat, P.O. BOX 37G, Laghouat (03000), Algeria; 
    Department of Mathematics and Computer Science, Faculty of Science and Technology, University of Ghardaia, Ghardaia (47000), Algeria. 
    E-mail: chellaoua.houria@univ-ghardaia.edu.dz
  }
  \and
  \textsc{Yamna Boukhatem}\thanks{
  	Laboratory of Innovation in Mathematics and Applied Sciences (LIMAS),
  	National Higher School of Mathematics,
  	Scientific and Technology Hub of Sidi Abdellah,
  	P.O. Box 75, Algiers 16093, Algeria.
    E-mail: yamna.boukhatem@nhsm.edu.dz
  }
  \and
  \textsc{Cristina Pignotti}\thanks{
    Dipartimento di Ingegneria e Scienze dell'Informazione e Matematica, Universit\`a dell'Aquila, Via Vetoio, Loc. Coppito, 67100 L'Aquila, Italy. 
    E-mail: cristina.pignotti@univaq.it
  }
}
\maketitle

\begin{abstract}
In this paper, we establish the well-posedness and exponential stability of a class of semilinear neutral abstract evolution equations with constant time delay. Two damping mechanisms are considered: frictional damping of the form $CC^*u_t$ and viscoelastic damping with memory. Under suitable assumptions, we prove global well-posedness and exponential decay of solutions for sufficiently small initial data. This result is of considerable importance, as it provides a unified framework for the analysis of a wide class of neutral systems arising in structural mechanics, seismic isolation, and control theory. The abstract results are illustrated by some concrete examples. The analysis relies on a combination of semigroup theory, energy estimates, Duhamel's formula, and  Gronwall-type arguments. 
\end{abstract}
\begin{keywords}  Semilinear evolution equations, time delay, exponential stability, Duhamel's formula, viscoelastic damping.\\
\textbf{AMS Subject Classifications:} 34K40, 93D15, 35L90, 35B35.
\end{keywords}


\section{Introduction}

 Let $H$ be a real Hilbert space with inner product and related norm denoted by $ \langle . , . \rangle $ and $ \Vert \: . \: \Vert$, respectively. Let $A : D(A)\longrightarrow H $ be a self-adjoint linear positive operator, with dense domain $D(A)\subset H,$ and $\tau >0$ represents a time delay.
In this work, we consider firstly the following second-order abstract evolution equation
 \begin{equation}\label{p}
\left\{
\begin{array}{ll}
u_{tt}(t)+A u(t)+ CC^*u_t(t)+\alpha(t) BB^* u_t(t-\tau)=\nabla \psi_1(u) + \nabla \psi_2(u(t-\tau)), & \quad t \in(0,+\infty), \\
u(t-\tau)=f_{0}(t-\tau),
 \; u_{t}(t-\tau)=g_{0}(t-\tau)& \quad t \in [0,\tau],\\
\end{array}
\right.
\end{equation}
where the initial data $(f_0,g_0)$ are taken in suitable spaces and
we denote $u_0 := f_0(0)$ and $u_1 := g_0(0)$. $B$ is a bounded linear operator of $H$ into itself. $\alpha$ is an $L^1_{loc}([0,+\infty))$ function satisfying, for all $t\geq 0$
\begin{equation}\label{cdt_alpha}
 \int_{t-\tau}^{t}\vert \alpha(s)\vert ds\leq \sigma \quad \text{for some} \quad \sigma>0.
\end{equation}

Moreover, for a given real Hilbert space $W,$ that will be identified with its dual space, $C:W\to H$ is a bounded linear operator. We denote ${C}^*, B^*$ the adjoint of ${C}, B$ respectively. We assume that the damping operator ${C}{C}^*$ satisfies a control geometric property (see e.g. \cite{Bardos} or \cite[Chapter 5]{K}), namely, for the model 
\begin{equation}\label{modello_linear}
\begin{array}{l}
\displaystyle{u_{tt}(t)+A u(t)+{C}{C}^*u_t(t)=0,
}\\
\displaystyle{u(0)=u_0, }\ \ {\displaystyle{u_t(0)=u_1, }}
\end{array}
\end{equation}
the observability inequality 
\begin{equation}\label{OI}
\int_0^T\Vert {C}^*u_t(t)\Vert_{W}^2 dt \ge c \left( \|u_1\|_H^2+ \|A^{\frac 12}u_0\|_H^2\right ),
\end{equation}
holds for some time $T>0,$ for a suitable positive constant $c$
independent of the initial datum $(u_0, u_1).$
The delay operator $B$ can be, instead, any bounded operator.

{Time delays occur naturally in numerous applications and physical models, where even an arbitrarily small delay is known to potentially destabilize an otherwise uniformly exponentially stable system (cf. \cite{DLP, nicaise_2006, XYL}). Nonetheless, stability can be recovered through an appropriate choice of the delay value (cf. \cite{Gugat}) or through suitable feedback laws (cf. \cite{nicaise_2006, XYL}). Since a time delay can act as a source of instability, the construction of suitable stabilizing controls, which are considerably more delicate to handle than the standard ones, becomes crucial.}

Inspired by \cite{alabau_2008}, we assume the following assumptions

\textbf{(A1)} For every \( u \in D(\Ar) \), there exists a constant \( c(u) > 0 \) such that
\[
|D\psi_i(u)(v)| \leq c(u)\|v\| \quad \forall v \in D(\Ar), \quad i=1, 2.
\]
Then, for $i=1,2$, \( \psi_i \) can be extended to the whole \( H \) and we denote by \( \nabla \psi_i(u) \) the unique vector representing \( D\psi_i(u) \) in the Riesz isomorphism, i.e.,
\[
\langle \nabla \psi_i(u), v \rangle = D\psi_i(u)(v), \quad \forall v \in H.
\]

\textbf{(A2)} For all 
\( R> 0 \), there exists a constant \( L(R) > 0 \) such that
\[
\|\nabla \psi_i(u) - \nabla \psi_i(v)\| \leq L(R)\|\Ar (u - v)\|,
\]
 for all \( u, v \in D(\Ar ) \) satisfying \( \|\Ar u\| \leq R \) and \( \|\Ar v\|\leq R\), for $i=1,2,$ and $L(R) \to 0$, if $R \to 0$.

\textbf{(A3)} $\psi_i(0)=\nabla\psi_i(0)=0$, for $i=1,2,$ and there exist a strictly increasing continuous function \( h_1 \), such that
\[\vert \langle 
\nabla \psi_1(u), u\rangle\vert\leq h_1(\|\Ar u\|) \|\Ar u\|^2, \quad \forall u \in D(\Ar ).
\]

Next, as a different class of systems, we consider a model with delay feedback and nonlinearities as above, but involving a memory term in place of the linear dissipative term, that is
 \begin{equation}\label{visco_p}
\left\{
\begin{array}{ll}
u_{tt}(t)+A u(t)-\int_{0}^{+\infty} k(s)Au(t-s)ds+\alpha(t) BB^* u_t(t-\tau)=\nabla \psi_1(u) + \nabla \psi_2(u(t-\tau)), & \quad t \in(0,+\infty), \\
u(t-\tau)=f_{0}(t-\tau), & \quad t\in (-\infty, \tau],\\
u_{t}(t-\tau)=g_{0}(t-\tau)& \quad t \in [0,\tau],\\
\end{array}
\right.
\end{equation}
where  the operators $A$ and $B$, the function $\alpha$, and the nonlinear functions $\psi_i$, for $i=1,2$, all satisfy the same conditions as before, and the initial data $(f_0,g_0)$ are taken in suitable spaces. Also here, we denote $u_0 := f_0(0)$ and $u_1 := g_0(0)$. The memory kernel \(k:[0,+\infty)\to[0,+\infty)\) satisfies the following classical assumptions:
\begin{enumerate}
 \item \(k\in C^1(\mathbb{R}_+)\cap L^1(\mathbb{R}_+);\)
 \item \(k(0)=k_0>0;\)
 \item \(\displaystyle \int_0^{\infty} k(t)\,\mathrm{d}t =: \tilde{k} < 1;\)
 \item $k'(t)\le -\zeta\,k(t)$, for some $\zeta>0.$
\end{enumerate}

The main goal of this paper is to establish the well-posedness of systems \eqref{p} and \eqref{visco_p} and to derive an exponential decay estimate for solutions corresponding to  sufficiently small initial data, subject to appropriate conditions on the parameters of the models. A linear model of this type of abstract evolution equation has been analyzed in \cite{nicaise_2015}. The authors showed that for that system, which is exponentially
stable in absence of time delay, the exponential stability is preserved
if the norm of the delay feedback operator  is sufficiently small. This result has then been extended to abstract semilinear evolution equations, see \cite{nicaise_2018,komornik_2022}.
See also \cite{chellaoua_2021, nicaise_2006,
 	Paolucci_2022, continelli} for further developments and extensions. 
 	
 	  For recent stability results for other specific models including time delay effects we refer to \cite{Oliveira, Baudouin, Ma, Capistrano, Issa, Ammari}.

Concerning viscoelastic models without delay, a one-dimensional viscoelastic wave equation was studied in \cite{dafermos_1970} where the author established that the energy of the problem decays asymptotically to zero under  Dirichlet boundary conditions and some assumption on the memory kernel. Since then, numerous papers have extended these results to various settings; for example exponential stability  is obtained in \cite{Giorgi} for exponentially decaying memory kernels. For results including both memory damping and time delays we mention
 \cite{alabau_2014,guesmia_2013,pignotti_2017,kirane_2011,Dai_2014,feng_2017, paolucci_2021_visco, continelli}. 

Motivated by these works, we are interested in this paper in studying a class of semilinear abstract neutral evolution equations with frictional or viscoelastic damping in the presence of constant delay with time-varying coefficient. 

 {The main novelty of this paper is the introduction of a time delay in the nonlinear source term. From a mathematical point of view, this feature creates additional difficulties. In particular, the energy functionals commonly used in the absence of delay in the source are no longer suitable, and new functionals have to be introduced. Moreover, the delayed source gives rise to an additional term in the energy estimates, which requires a careful treatment and suitable control. This extension is also relevant from the point of view of applications. Indeed, when the source term is artificially generated, time delays naturally arise in the practical implementation of control strategies and should therefore be incorporated into the mathematical model.}

Through a combination of semigroup arguments, suitable energy estimates, and an iterative method, we prove, under suitable assumptions, the well-posedness of the problems and  exponential decay estimates for solutions corresponding to small initial data. This work therefore extends the analysis initiated in \cite{nicaise_2015, komornik_2022} and further developed in \cite{paolucci_2021_visco,Paolucci_2022, continelli}. 

The remainder of this paper is structured as follows. Section \ref{linear_dam_sec} is devoted to introducing the abstract framework for systems with frictional damping of the form $CC^*u_t$, establishing the well-posedness and exponential stability results for system \eqref{p}. In Section \ref{visco_dam_sec}, we study abstract viscoelastic systems, where the damping is of memory type. For this second class of systems, we prove analogous well-posedness and stability results under suitable assumptions on the memory kernel and the nonlinearities. Finally, in Section \ref{expls_sec}, we illustrate the applicability of our abstract results by presenting concrete examples.

\section{Abstract systems with frictional damping: system \eqref{p}} \label{linear_dam_sec}
This section is devoted to the abstract reformulation of problem \eqref{p}, followed by well-posedness and exponential stability results. Let us reformulate problem \eqref{p} into the abstract form 
\begin{equation}\label{P_abstract}
\left\{
\begin{array}{ll}
U_t(t) = \A U(t) +\alpha(t)\B U(t - \tau)+ F(U(t), U(t - \tau)), &\quad t \in (0, +\infty), \\
 U(t - \tau)=\Phi(t-\tau), &\quad t \in [0, \tau]
\end{array}
\right.
\end{equation}
where $U(t) = (u(t),v(t) )^T$ with $v(t) = u_t(t)$. Let us denote $U_0=U(0).$ We introduce the Hilbert space $\mathcal{H}=D(\sqrt{A})\times H$, equipped with the inner product
$$\left\langle \begin{pmatrix} u_1 \\ v_1 \end{pmatrix},\begin{pmatrix} u_2 \\ v_2 \end{pmatrix}\right\rangle_{\mathcal{H}}=\left \langle \Ar u_1, \Ar u_2\right\rangle +\langle v_1, v_2\rangle .$$
The linear operators $\A$ and $\B$ are given by
$$\A \begin{pmatrix} u \\ v \end{pmatrix} = \begin{pmatrix} v \\ -A u-CC^*v \end{pmatrix}, \quad \B \begin{pmatrix} u \\ v \end{pmatrix} = \begin{pmatrix} 0 \\ -BB^*v \end{pmatrix} $$$$ \quad
\text{and} 
\quad F(U(t), U(t - \tau)) = \begin{pmatrix}
0 \\
\nabla \psi_1(u(t)) + \nabla \psi_2(u(t - \tau))
\end{pmatrix}.$$

We know that, under some controllability assumptions on the damping operator ${C}{C}^*,$ namely when the observability inequality \eqref{OI} holds for the linear model \eqref{modello_linear}, the operator $\A$ generates an exponentially stable $C_0$-semigroup $\{ S(t)\}_{t\geq 0}$, specifically there exist $M,\omega >0$ such that
\begin{equation}\label{decay_semigroup}
\|S(t)\|_{\mathcal L(\mathcal H)} \leq Me^{-\omega t}, \quad \forall t\geq 0.
\end{equation}
Moreover, the previous hypotheses (A2)-(A3) on $\psi_i, i=1,2,$ imply the following properties of $F$:
\begin{itemize}
\item[(F1)] $F(0)=0$;
\item[(F2)] for any $R>0$ there exists a constant $L(R)>0$ such that
\begin{equation}\label{F_local}
 \| F(U_1, U_1^\tau) - F(U_2, U_2^\tau) \| 
\leq L(R) \left( \| U_1 - U_2 \| + \| U_1^\tau - U_2^\tau \| \right),
\end{equation}
for $\| U_i\|,\| U_i^\tau \|\leq R,$ $i=1,2,$ with $L(R) \to 0$, if $R \to 0$, where $U^\tau=U(t-\tau)$.
\end{itemize}

We first give a local existence result.
\begin{lemma}\label{lem_abst_exis}
 Let us consider the system \eqref{P_abstract} with initial data $\Phi \in C([-\tau, 0]; \He)$. Then, there exists a unique continuous local solution $U$ defined on
a time interval $[0, T)$, with $T >0,$ which is given by
\begin{equation}\label{formule_U_rho}
U(t) = S(t) U_0 + \int_0^t S(t -s) \bigl[\alpha(s)\B U(s - \tau)+ F(U(s), U(s - \tau))\bigr] \, ds,
\end{equation}

for all $t \in [0, T)$.
\end{lemma}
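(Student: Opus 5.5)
We construct the solution by the method of steps on intervals of length $\tau$, combined with a fixed point argument for the Duhamel formulation \eqref{formule_U_rho}. The key structural fact is that on $[0,\tau]$ the delayed argument $U(s-\tau)=\Phi(s-\tau)$ is a known function. Hence $\alpha(s)\B U(s-\tau)$ is a known forcing term there, and the only unknown nonlinearity is $F(U(s),\Phi(s-\tau))$, which is locally Lipschitz in its first argument by (F2).

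\textbf{Step 1 (setting up the map).} Set $R_0:=\max_{[-\tau,0]}\|\Phi\|_{\He}$ and $R:=2M R_0+1$, where $M\ge 1$ is taken from \eqref{decay_semigroup} (we may assume $M\ge1$). For $T_1\in(0,\tau]$, let $X_{T_1}$ be the closed set of $U\in C([0,T_1];\He)$ with $U(0)=U_0$ and $\sup_{[0,T_1]}\|U\|\le R$, equipped with the sup metric. Extend each such $U$ by $\Phi$ on $[-\tau,0]$ and define
\[
(\Gamma U)(t)=S(t)U_0+\int_0^t S(t-s)\bigl[\alpha(s)\B \Phi(s-\tau)+F(U(s),\Phi(s-\tau))\bigr]\,ds .
\]
Continuity of $\Gamma U$ follows from strong continuity of $S$ together with $\alpha\in L^1_{loc}$: the integrand is Bochner integrable because $\B$ is bounded and $\|F(U,\Phi^\tau)\|\le L(R)(\|U\|+\|\Phi^\tau\|)$ by (F1)–(F2). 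Continuity of the convolution $t\mapsto\int_0^tS(t-s)g(s)\,ds$ for $g\in L^1(0,T_1;\He)$ is then standard.

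\textbf{Step 2 (self-map and contraction).} From $\|S(t)\|\le M$ we get
\[
\|\Gamma U(t)\|\le MR_0+M\|\B\|R_0\int_0^{T_1}|\alpha|+2ML(R)RT_1 ,
\]
and
\[
\|\Gamma U-\Gamma V\|_\infty\le ML(R)T_1\|U-V\|_\infty .
\]
Since $\int_0^{T_1}|\alpha|\to0$ as $T_1\to0$ by absolute continuity of the integral, we can choose $T_1$ small enough that $\Gamma$ maps $X_{T_1}$ into itself and is a contraction. Banach's fixed point theorem then gives a unique continuous solution of \eqref{formule_U_rho} on $[0,T_1]$. Uniqueness in the full class of continuous solutions (without the a priori bound $R$) follows by a Gronwall argument: any two continuous solutions are bounded on a compact interval by some common $R'$, and the Lipschitz estimate with $L(R')$ applies.

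\textbf{Step 3 (continuation).} We restart the argument from $T_1$ with initial state $U(T_1)$, the delayed values now being given by $\Phi$ or by the already constructed $U$, both known. This extends the solution step by step. After reaching $\tau$, we treat $[\tau,2\tau]$ in the same way, with history $U|_{[0,\tau]}$, and so on. Gluing these pieces gives a maximal interval $[0,T)$ with $T>0$ on which the continuous solution satisfies \eqref{formule_U_rho} and is unique.

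The main, though mild, obstacle is that $\alpha$ is only $L^1_{loc}$ rather than bounded. This is why the step length has to be controlled through $\int|\alpha|$, using \eqref{cdt_alpha} or absolute continuity, rather than through $\sup|\alpha|$. A second point is that (F2) is only local, which forces us to work in the ball of radius $R$.
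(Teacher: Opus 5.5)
Your proposal is correct and follows the same route as the paper. On $[0,\tau]$ the delayed terms are known, so \eqref{P_abstract} becomes a nondelayed inhomogeneous semilinear Cauchy problem, and the paper simply cites the classical local theory for locally Lipschitz perturbations of a $C_0$-semigroup (Pazy), whose proof is exactly your contraction argument followed by step-by-step continuation. Your version also makes explicit a point the citation glosses over: the forcing $\alpha(s)\B\Phi(s-\tau)$ is only integrable in time, and you handle this by choosing the step length through the absolute continuity of $\int|\alpha|$.
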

\begin{proof}
By considering $t \in [0,\tau],$ problem \eqref{P_abstract} can be rewritten as a nondelayed inhomogeneous Cauchy problem. Then, by applying the classical theory of nonlinear semigroups, see \cite{pazy}
we obtain the existence of a unique local solution on a set $[0, \delta )$, with $\delta \leq \tau,$ that can eventually exists on a larger interval $[0,T).$
\end{proof}

\subsection{An exponential decay estimate for system \eqref{p} }

For our decay estimate, we need the following assumption on the linear delay feedback.

For suitable constants $\gamma\geq 0$ and $\omega^\prime\in [0,\omega[$, it holds that
 \begin{equation}\label{cdt_alpha_2}
 M b e^{\omega \tau}\int_{0}^{t}\vert \alpha(s+\tau)\vert ds\leq \gamma+\omega^\prime t,\quad \forall t>0, 
\end{equation}
where $b=\Vert\B\Vert_{\mathcal{H}}=\Vert BB^*\Vert$ and $M, \omega$ are the parameters in \eqref{decay_semigroup}.

\begin{theorem}\label{th_stability}
Assume \eqref{cdt_alpha_2}.
 Let $U$ be a solution to \eqref{P_abstract}, defined on a time interval $[0, T'),$ 
satisfying
\begin{equation}\label{U_rho_th}
 \Vert U(t)\Vert \leq C' \quad \forall t\in [-\tau, T'),
\end{equation}
for some $C'>0$ such that 
\begin{equation}\label{serve}
L(C')<\frac {\omega-\omega^\prime}{M(1+e^{\omega\tau})}.
\end{equation}

Then, $U$ decays exponentially; 
 \begin{equation}\label{stab_estimate_2}
 \left\Vert U(t)\right\Vert_{\He} \leq Me^\gamma\left(\left\Vert U_0\right\Vert_{\He}+\beta+\int_{0}^{\tau} e^{\omega s}\vert \alpha(s)\vert\cdot\Vert \Phi(s-\tau)\Vert_{\He} ds\right) e^{-(\omega-\omega^\prime-ML(C')(1+e^{\omega \tau}))t},
\end{equation}
for all $t\in [0,T'),$ where
\begin{equation}\label{beta_eq}
	\beta = L(C')\int_0^\tau e^{\omega s} \Vert \Phi(s-\tau)\Vert_{\mathcal{H}}ds. 
	\end{equation}
\end{theorem}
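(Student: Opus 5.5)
We start from the Duhamel formula \eqref{formule_U_rho} of Lemma \ref{lem_abst_exis} and the decay bound \eqref{decay_semigroup}. Taking norms gives, for $t\in[0,T')$,
\[
\|U(t)\|\le Me^{-\omega t}\|U_0\|+M\int_0^t e^{-\omega(t-s)}\bigl[b|\alpha(s)|\,\|U(s-\tau)\|+\|F(U(s),U(s-\tau))\|\bigr]ds .
\]
Since $F(0,0)=0$ by (F1) and $\|U(s)\|,\|U(s-\tau)\|\le C'$ by \eqref{U_rho_th}, property (F2) with $R=C'$ yields $\|F(U(s),U(s-\tau))\|\le L(C')(\|U(s)\|+\|U(s-\tau)\|)$. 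Multiplying by $e^{\omega t}$ and setting $y(t)=e^{\omega t}\|U(t)\|$, we obtain
\[
y(t)\le M\|U_0\|+Mb\int_0^t|\alpha(s)|e^{\omega s}\|U(s-\tau)\|ds+ML(C')\int_0^t y(s)ds+ML(C')\int_0^t e^{\omega s}\|U(s-\tau)\|ds .
\]

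Next we treat the delayed terms. Each is split at $s=\tau$. On $[0,\min(t,\tau)]$ the factor $U(s-\tau)=\Phi(s-\tau)$ is initial data, and these pieces produce exactly $\int_0^\tau e^{\omega s}|\alpha(s)|\|\Phi(s-\tau)\|ds$ and $\beta$ from \eqref{beta_eq}; the factor $Mb$ in front of the first is harmless after adjusting constants, and I expect it is absorbed by the final prefactor. On $[\tau,t]$ we substitute $s=r+\tau$ and use $e^{\omega s}\|U(s-\tau)\|=e^{\omega\tau}y(r)$. This gives $Mbe^{\omega\tau}\int_0^{t-\tau}|\alpha(r+\tau)|y(r)dr$ and $ML(C')e^{\omega\tau}\int_0^{t-\tau}y(r)dr$, and both integrals can be extended to $[0,t]$ because $y\ge0$. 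Collecting terms,
\[
y(t)\le K+\int_0^t\Bigl(Mbe^{\omega\tau}|\alpha(r+\tau)|+ML(C')(1+e^{\omega\tau})\Bigr)y(r)\,dr,
\]
where $K=M(\|U_0\|+\beta+\int_0^\tau e^{\omega s}|\alpha(s)|\|\Phi(s-\tau)\|ds)$, up to the constant adjustments just mentioned.

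Finally, Gronwall's lemma gives $y(t)\le K\exp\bigl(Mbe^{\omega\tau}\int_0^t|\alpha(r+\tau)|dr+ML(C')(1+e^{\omega\tau})t\bigr)$. Hypothesis \eqref{cdt_alpha_2} bounds the exponent by $\gamma+\omega' t+ML(C')(1+e^{\omega\tau})t$. Multiplying back by $e^{-\omega t}$ yields \eqref{stab_estimate_2}, and condition \eqref{serve} guarantees that the resulting rate is positive.

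The main obstacle is the bookkeeping of the delayed terms. The change of variables must produce exactly the weights $e^{\omega\tau}$ appearing in \eqref{cdt_alpha_2} and \eqref{serve}, and the initial-history contributions must match the stated constants: the $\alpha$-term on $[0,\tau]$ carries a factor $Mb$, which must either be absorbed or tracked in $K$. A secondary point is that $\alpha$ is only $L^1_{loc}$, so the Gronwall lemma must be applied in its integral form with a locally integrable kernel.
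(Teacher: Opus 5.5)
Your argument follows the paper's proof step for step: Duhamel's formula with \eqref{decay_semigroup}, the bound $\|F(U(s),U(s-\tau))\|\le L(C')(\|U(s)\|+\|U(s-\tau)\|)$ from (F1)--(F2), splitting both delayed integrals at $s=\tau$, the shift $s=r+\tau$ that produces the weight $e^{\omega\tau}$, Gr\"onwall with kernel $Mbe^{\omega\tau}|\alpha(r+\tau)|+ML(C')(1+e^{\omega\tau})$, and finally \eqref{cdt_alpha_2}.

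One point needs correcting. You expect the factor $b$ in front of $\int_0^\tau e^{\omega s}|\alpha(s)|\,\|\Phi(s-\tau)\|\,ds$ to be absorbed by the prefactor $Me^{\gamma}$. It cannot be, because $b=\|BB^*\|$ may exceed $1$ and nothing else in the constant controls it. For a concrete case, take $F\equiv 0$ (so $L(C')$ can be chosen arbitrarily small), $U_0=0$, $BB^*=bI$, and $\alpha=1$ on $[0,\tau]$ with $\alpha=0$ afterwards. Then $\gamma=\omega'=0$ is admissible for every $b$, and the $b$-free right-hand side of \eqref{stab_estimate_2} stays bounded as $b$ grows. Meanwhile $\|U(\tau)\|=b\,\bigl\|\int_0^\tau S(\tau-s)(0,g_0(s-\tau))\,ds\bigr\|$ is linear in $b$.

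The paper's own proof also ends with $b\int_0^\tau\cdots$ inside the bracket; the displayed statement simply drops it. So what both arguments actually establish is \eqref{stab_estimate_2} with that integral multiplied by $b$ (equivalently, with $\max\{1,b\}$ in front of the bracket). You should keep $b$ in $K$ rather than try to remove it.
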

\begin{remark}\label{remark_1}
 In the following, we will refer to assumption \eqref{U_rho_th} in Theorem \ref{th_stability}, with $C'$ satisfying \eqref{serve}, as a well-posedness assumption.
\end{remark}
\begin{proof} From Duhamel's formula, we obtain 
\begin{eqnarray*}
 \left\Vert U(t)\right\Vert_{\He} &\leq& M e^{-\omega t} \left\Vert U_0\right\Vert_{\He}+ M e^{-\omega t} \int_{0}^{t} e^{\omega s}\vert \alpha(s)\vert\Vert \B U(s-\tau)\Vert_{\He} ds\\
 && + M e^{-\omega t}\int_{0}^{t} e^{\omega s}\Vert F(U(s), U(s- \tau))\Vert_{\He} ds.
\end{eqnarray*} 
From \eqref{F_local} and by using the fact that $F(0,0)=0$, we have
\begin{equation*}
 \Vert F(U(t), U(t- \tau))\Vert_{\He}\leq L(C')\left(\Vert U(t)\Vert_{\He}+\Vert U(t- \tau))\Vert_{\He}\right).
\end{equation*}
Consequently,
\begin{eqnarray*}
 \left\Vert U(t)\right\Vert_{\He} &\leq& M e^{-\omega t} \left\Vert U_0\right\Vert_{\He}+ M e^{-\omega t} \int_{0}^{t} e^{\omega s}\vert \alpha(s)\vert\Vert \B U(s-\tau)\Vert_{\He} ds\\
 && + M L(C')e^{-\omega t}\int_{0}^{t} e^{\omega s}\left(\Vert U(s)\Vert_{\He}+\Vert U(s- \tau))\Vert_{\He}\right)ds,
\end{eqnarray*}
from which it follows that
\begin{eqnarray*}
 \left\Vert U(t)\right\Vert_{\He} &\leq& M e^{-\omega t} \left\Vert U_0\right\Vert_{\He}+ M e^{-\omega t} \int_{0}^{\tau} e^{\omega s}b\vert \alpha(s)\vert \cdot\Vert \Phi(s-\tau)\Vert_{\He} ds
\\
 & & +M e^{-\omega t} b\int_{\tau}^{t} e^{\omega s}\vert \alpha(s)\vert\cdot\Vert U(s-\tau)\Vert_{\He} ds \\
 & & + M L(C')e^{-\omega t}\int_{0}^{t} e^{\omega s}\left(\Vert U(s)\Vert_{\He}+\Vert U(s- \tau))\Vert_{\He}\right)ds.
\end{eqnarray*}

Using the change of variables $\sigma=s-\tau$, we get
\begin{eqnarray*}
 \int_{\tau}^{t} e^{\omega s}\vert \alpha(s)\vert\cdot\Vert U(s-\tau)\Vert_{\He} ds&=& e^{\omega \tau} \int_{0}^{t-\tau} e^{\omega \sigma}\vert \alpha(\sigma+\tau)\vert\cdot\Vert U(\sigma)\Vert_{\He} d\sigma \\
 &\leq& e^{\omega \tau} \int_{0}^{t} e^{\omega s}\vert \alpha(s+\tau)\vert\cdot\Vert U(s)\Vert_{\He} ds.
\end{eqnarray*}
Thus, 
\begin{eqnarray*}
 \left\Vert U(t)\right\Vert_{\He} &\leq& M e^{-\omega t} \left\Vert U_0\right\Vert_{\He}+ M e^{-\omega t} b\int_{0}^{\tau} e^{\omega s}\vert \alpha(s)\vert\cdot\Vert \Phi(s-\tau)\Vert_{\He} ds\\
 & & +M e^{-\omega t} b e^{\omega \tau} \int_{0}^{t} e^{\omega s}\vert \alpha(s+\tau)\vert\cdot\Vert U(s)\Vert_{\He} ds \\
 & & + M L(C')e^{-\omega t}\int_{0}^{t} e^{\omega s}\left(\Vert U(s)\Vert_{\He}+\Vert U(s- \tau))\Vert_{\He}\right)ds.
\end{eqnarray*}
We put 
\begin{equation}
\label{beta_constant}
 \beta= L(C')\int_{0}^{\tau}e^{\omega s}\Vert U(s- \tau))\Vert_{\He}ds.
\end{equation}
Then, 
\begin{eqnarray*}
 \left\Vert U(t)\right\Vert_{\He} &\leq& M e^{-\omega t} \left( \left\Vert U_0\right\Vert_{\He}+ b\int_{0}^{\tau} e^{\omega s}\vert \alpha(s)\vert\cdot\Vert \Phi(s-\tau)\Vert_{\He} ds\right.\\
 & & +b e^{\omega \tau} \int_{0}^{t} e^{\omega s}\vert \alpha(s+\tau)\vert\cdot\Vert U(s)\Vert_{\He} ds \\
 & & \left.+ \beta + L(C')\int_{0}^{t} e^{\omega s}\Vert U(s)\Vert_{\He}ds+ L(C')\int_{\tau}^{t} e^{\omega s}\Vert U(s- \tau))\Vert_{\He}ds\right).
\end{eqnarray*}
Thus, by the change of variables $\sigma=s-\tau,$
\begin{eqnarray*}
 \left\Vert U(t)\right\Vert_{\He} &\leq& M e^{-\omega t} \left( \left\Vert U_0\right\Vert_{\He}+ b\int_{0}^{\tau} e^{\omega s}\vert \alpha(s)\vert\cdot\Vert \Phi(s-\tau)\Vert_{\He} ds\right.\\
 & & +b e^{\omega \tau} \int_{0}^{t} e^{\omega s}\vert \alpha(s+\tau)\vert\cdot\Vert U(s)\Vert_{\He} ds \\
 & & \left.+ \beta + L(C')\int_{0}^{t} e^{\omega s}\Vert U(s)\Vert_{\He}ds+ L(C')e^{\omega \tau} \int_{0}^{t} e^{\omega s}\Vert U(s))\Vert_{\He}ds\right).
\end{eqnarray*}
Finally, for all $t\in [0, T'),$ we obtain
\begin{eqnarray*}
 \left\Vert U(t)\right\Vert_{\He} &\leq& M e^{-\omega t} \left( \left\Vert U_0\right\Vert_{\He}+\beta+b\int_{0}^{\tau} e^{\omega s}\vert \alpha(s)\vert\cdot\Vert \Phi(s-\tau)\Vert_{\He} ds\right.\\
 & & \left.+ \int_{0}^{t}\left(b e^{\omega \tau} \vert \alpha(s+\tau)\vert +L(C')(1+e^{\omega \tau})\right)e^{\omega s}\Vert U(s)\Vert_{\He}ds\right).
\end{eqnarray*}
Now, let us denote $$\widetilde{u}(t) :=e^{\omega t}\left\Vert U(t)\right\Vert_{\He} , \quad \lambda=\left\Vert U_0\right\Vert_{\He}+\beta+ b\int_{0}^{\tau} e^{\omega s}\vert \alpha(s)\vert\cdot\Vert \Phi(s-\tau)\Vert_{\He} ds,$$ 
$$ \phi(s)= b e^{\omega \tau} \vert \alpha(s+\tau)\vert +L(C')(1+e^{\omega \tau}).$$
Hence 
$$ \widetilde{u}(t)\leq M\lambda+M\int_{0}^{t}\phi(s)\widetilde{u}(s)ds.$$
Then, using Gr\"{o}nwall's estimate and the definitions of $\widetilde{u}$, $\lambda$ and $\phi$, we get
\begin{eqnarray}\label{stab_estimate}\notag
 \left\Vert U(t)\right\Vert_{\He} &\leq & M\left(\left\Vert U_0\right\Vert_{\He}+\beta+ b\int_{0}^{\tau} e^{\omega s}\vert \alpha(s)\vert\cdot\Vert \Phi(s-\tau)\Vert_{\He} ds\right) \times \\
 & & \hspace{3 cm}\times e^{ML(C')(1+e^{\omega \tau})t+M b e^{\omega \tau}\int_{0}^{t}\vert \alpha(s+\tau)\vert ds-\omega t}.
\end{eqnarray}
By using the assumptions \eqref{cdt_alpha_2}, we arrive at
 \begin{equation*}\notag
 \left\Vert U(t)\right\Vert_{\He} \leq Me^\gamma\left(\left\Vert U_0\right\Vert_{\He}+\beta+b\int_{0}^{\tau} e^{\omega s}\vert \alpha(s)\vert\cdot\Vert \Phi(s-\tau)\Vert_{\He} ds\right) e^{-(\omega-\omega^\prime-ML(C')(1+e^{\omega \tau}))t}.
\end{equation*}
since $L(C')$ satisfies $L(C')<\frac {\omega-\omega^\prime}{M(1+e^{\omega\tau})},$ the exponential decay of the solution is guaranteed. 
\end{proof}
\subsection{Global existence and exponential stability for system \eqref{p}}

Establishing the exponential stability of the solutions to system \eqref{p} requires verifying that the well-posedness condition holds, see Remark \ref{remark_1}. For this purpose, we introduce the energy functional related to system \eqref{p} as follows
\begin{equation}\label{function_E}
\begin{array}{l}
\displaystyle{ E(t)=E(u(t))= \frac{1}{2}\left\|u_{t}(t)\right\|^{2}+\frac{1}{2}\left\|\sqrt{A}u(t)\right\|^{2}-\psi_1(u(t))+\frac{1}{2}\int^{t}_{t-\tau}
 \Vert \nabla \psi_2(u(s))\Vert^2 ds} \\
 \hspace{2,7 cm}\displaystyle{
+ \frac{1}{2}\int_{t-\tau}^{t}\vert \alpha(s+\tau)\vert \Vert B^*u_t(s) \Vert^2ds} , 
\end{array}
\end{equation}
for all $t\geq 0$.
Note that,  compared with the one in \cite{Paolucci_2022},
this energy functional contains the additional term $\frac{1}{2}\int^{t}_{t-\tau}
\Vert \nabla \psi_2(u(s))\Vert^2 ds,$ which is  crucial in order to menage the delayed source term. 

To prove our global well-posedness and stability result, we need some preliminary results.

\begin{pro}\label{pro_1}
Let $u$ be a solution to \eqref{p} defined on some interval $[0, T').$
If
\begin{equation}\label{aggiunta}
\Vert \nabla\psi_2(u(t))\Vert \le \Vert\sqrt{A}u(t)\Vert,\ \forall t\in [0, T'),
\end{equation}
and 
 the energy functional verifies
\begin{equation}\label{cdt_energy}
E(t)\geq \frac{1}{4}
\Vert u_t(t)\Vert^2 + \frac{1}{4}\left \Vert\sqrt{A}u(t)\right \Vert^2, \quad \forall t\in [0, T'),
\end{equation}
then,
\begin{equation}\label{estimate_energy}
 E(t)\leq C(t)E(0), \quad \forall t\geq 0 ,
\end{equation}
where
\begin{equation}\label{constant_t}
 C(t)=e^{2\int_{0}^{t}[ 1+ \left(\vert\alpha(s)\vert+\vert\alpha(s+\tau)\vert\right)b] ds}.
\end{equation}
\end{pro}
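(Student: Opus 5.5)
We differentiate the energy functional \eqref{function_E} along solutions, bound $E'(t)$ by a multiple of $E(t)$ using \eqref{aggiunta} and \eqref{cdt_energy}, and conclude with Gr\"onwall's lemma. The computation is done for regular solutions and then extended by density, which is standard.

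\textbf{Step 1: the derivative of $E$.} Differentiating \eqref{function_E} and using \eqref{p} tested against $u_t$, the terms $\langle Au,u_t\rangle$ cancel. The term $\langle\nabla\psi_1(u),u_t\rangle$ cancels against $-\frac{d}{dt}\psi_1(u(t))$. We obtain
\begin{align*}
E'(t)={}&-\Vert C^*u_t(t)\Vert_W^2-\alpha(t)\langle B^*u_t(t-\tau),B^*u_t(t)\rangle+\langle\nabla\psi_2(u(t-\tau)),u_t(t)\rangle\\
&+\tfrac12\Vert\nabla\psi_2(u(t))\Vert^2-\tfrac12\Vert\nabla\psi_2(u(t-\tau))\Vert^2\\
&+\tfrac12|\alpha(t+\tau)|\,\Vert B^*u_t(t)\Vert^2-\tfrac12|\alpha(t)|\,\Vert B^*u_t(t-\tau)\Vert^2 .
\end{align*}
The first term is nonpositive and is dropped.

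\textbf{Step 2: absorbing the delayed terms.} By Young's inequality,
\[
\langle\nabla\psi_2(u(t-\tau)),u_t\rangle\le \tfrac12\Vert\nabla\psi_2(u(t-\tau))\Vert^2+\tfrac12\Vert u_t\Vert^2 ,
\]
so the delayed source term is exactly absorbed by $-\frac12\Vert\nabla\psi_2(u(t-\tau))\Vert^2$. This cancellation is the reason the extra integral was added to $E$. Similarly,
\[
|\alpha(t)|\,|\langle B^*u_t(t-\tau),B^*u_t\rangle|\le \tfrac12|\alpha(t)|\,\Vert B^*u_t(t-\tau)\Vert^2+\tfrac12|\alpha(t)|\,\Vert B^*u_t\Vert^2 .
\]
The delayed part is absorbed, and $\Vert B^*u_t\Vert^2\le \Vert BB^*\Vert\,\Vert u_t\Vert^2=b\Vert u_t\Vert^2$. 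For the undelayed source term, \eqref{aggiunta} gives $\frac12\Vert\nabla\psi_2(u(t))\Vert^2\le\frac12\Vert\sqrt A u\Vert^2$. Altogether,
\[
E'(t)\le \tfrac12\Vert u_t\Vert^2+\tfrac12\Vert\sqrt A u\Vert^2+\tfrac b2\bigl(|\alpha(t)|+|\alpha(t+\tau)|\bigr)\Vert u_t\Vert^2 .
\]

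\textbf{Step 3: closing with Gr\"onwall.} By \eqref{cdt_energy}, $\frac12(\Vert u_t\Vert^2+\Vert\sqrt Au\Vert^2)\le 2E(t)$ and $\Vert u_t\Vert^2\le 4E(t)$. Hence
\[
E'(t)\le 2\bigl[1+b(|\alpha(t)|+|\alpha(t+\tau)|)\bigr]E(t).
\]
Note that \eqref{cdt_energy} also gives $E\ge0$, so the inequality may be used freely. Gr\"onwall's lemma then yields $E(t)\le C(t)E(0)$ with $C(t)$ as in \eqref{constant_t}; this is the claimed estimate \eqref{estimate_energy}, valid on $[0,T')$.

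The main obstacle is the bookkeeping in Step 1. One must check that the derivative of $\frac12\int_{t-\tau}^t|\alpha(s+\tau)|\Vert B^*u_t(s)\Vert^2ds$ produces exactly $-\frac12|\alpha(t)|\Vert B^*u_t(t-\tau)\Vert^2$, and that the new $\psi_2$-integral produces exactly $-\frac12\Vert\nabla\psi_2(u(t-\tau))\Vert^2$. These two terms are what absorb the delayed terms in Step 2. One must also justify the differentiation, which needs regular solutions and a density argument, with $\alpha$ only in $L^1_{loc}$ so that $E$ is absolutely continuous.
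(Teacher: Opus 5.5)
Your proof is correct and is essentially the paper's argument. In both, you differentiate $E$, absorb the two delayed terms by Young's inequality against the boundary contributions of the two integral terms, bound $\Vert B^*u_t\Vert^2\le b\Vert u_t\Vert^2$, and use \eqref{aggiunta} and \eqref{cdt_energy} to reach $E'(t)\le 2[1+b(|\alpha(t)|+|\alpha(t+\tau)|)]E(t)$ before concluding with Gr\"onwall. The paper adds an extra nonnegative $\Vert\sqrt{A}u\Vert^2$ term before invoking \eqref{cdt_energy}, whereas you use $\Vert u_t\Vert^2\le 4E$ directly, and you correctly state the conclusion on $[0,T')$ rather than for all $t\ge 0$.
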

\begin{proof}
 By multiplying the first equation in system \eqref{p} by $u_t$, we obtain
 \begin{equation*}
 \langle u_{tt}(t), u_t(t) \rangle =\langle -A u(t)-CC^*u_t(t)-\alpha(t) BB^* u_t(t-\tau)+\nabla \psi_1(u(t)) + \nabla \psi_2(u(t-\tau)), u_t(t) \rangle .
 \end{equation*}
 Otherwise, we have
 \begin{eqnarray*}
 E^\prime(t)&=& \langle u_{tt}(t), u_t(t) \rangle+\langle\sqrt{A}u(t), \sqrt{A}u_t(t) \rangle-\langle \nabla\psi_1(u), u_t(t) \rangle+\frac{1}{2}
 \Vert \nabla \psi_2(u(t))\Vert^2\\
& &-\frac{1}{2}
 \Vert \nabla \psi_2(u(t-\tau))\Vert^2 + \frac{1}{2}\vert \alpha(t+\tau)\vert \Vert B^*u_t(t) \Vert^2 -\frac{1}{2}\vert \alpha(t)\vert \Vert B^*u_t(t-\tau) \Vert^2 .
\end{eqnarray*}
By combining the previous estimates, we arrive at
 \begin{eqnarray*}
 E^\prime(t)&=& -\Vert C^* u_t(t)\Vert^2-\alpha(t) \langle BB^* u_t(t-\tau), u_t(t) \rangle+\langle\nabla \psi_2(u(t-\tau)), u_t(t) \rangle +\frac{1}{2}
 \Vert \nabla \psi_2(u(t))\Vert^2\\
& &-\frac{1}{2}
 \Vert \nabla \psi_2(u(t-\tau))\Vert^2 + \frac{1}{2}\vert \alpha(t+\tau)\vert \Vert B^*u_t(t) \Vert^2 -\frac{1}{2}\vert \alpha(t)\vert \Vert B^*u_t(t-\tau) \Vert^2 , 
\end{eqnarray*}
and by using Cauchy-Schwartz inequality, we obtain
 \begin{eqnarray*}
 E^\prime(t)&\leq& \frac{1}{2}
 \Vert u_t(t)\Vert^2 + \frac{1}{2}\Vert \nabla \psi_2(u(t))\Vert^2+\frac{1}{2} \left(\vert\alpha(t)\vert+\vert\alpha(t+\tau)\vert\right) \Vert B^*u_t(t) \Vert^2.
\end{eqnarray*}
By using the assumptions \eqref{aggiunta}, we then obtain 
\begin{equation*}
 E^\prime(t)\leq \frac{1}{2}
 \Vert u_t(t)\Vert^2 + \frac{1}{2} \left\Vert\sqrt{A}u(t)\right\Vert^2+\frac{1}{2} \left(\vert\alpha(t)\vert+\vert\alpha(t+\tau)\vert\right)b \Vert u_t(t) \Vert^2 , 
\end{equation*}
thus,
\begin{eqnarray*}
 E^\prime(t)&\leq& \frac{1}{2}
 \Vert u_t(t)\Vert^2 + \frac{1}{2} \left\Vert\sqrt{A}u(t)\right\Vert^2+\frac{1}{2} \left(\vert\alpha(t)\vert+\vert\alpha(t+\tau)\vert\right)b\Vert u_t(t) \Vert^2 \\
 & & +\frac{1}{2} \left(\vert\alpha(t)\vert+\vert\alpha(t+\tau)\vert\right)b\left\Vert\sqrt{A}u(t)\right\Vert^2.
\end{eqnarray*}
Since $ E$ satisfies \eqref{cdt_energy}, we then achieve
\begin{equation*}
 E^\prime(t)\leq 2 \left[\left(\vert\alpha(t)\vert+\vert\alpha(t+\tau)\vert\right)b+1\right] E(t) , 
\end{equation*}
By simple integration, we get
\begin{equation*}
 E(t)\leq E(0)+ 2 \int_{0}^{t} \left[\left(\vert\alpha(s)\vert+\vert\alpha(s+\tau)\vert\right)b +1\right] E(s)ds , 
\end{equation*}
and, from Gr\"{o}nwall's inequality, \eqref{estimate_energy} is obtained with $C(t)$ as in \eqref{constant_t}.
\end{proof}

\begin{lemma} \label{exis_lemma}
Let \( U(\cdot) \) be a non-zero solution to system \eqref{P_abstract} defined on the interval \( [0, \delta) \) and let $T>\delta$. Then,
\begin{enumerate}
 \item If \( h_1\left(\left\|\sqrt{A} u_0 \right\|\right) < \frac{1}{2} \), then \( E(0) > 0 \). 
 \item If \(h_1\left(2\sqrt{2}C^{1/2}(T)E^{1/2}(0)\right) <\frac{1 }{2} \), and \(L\left(2\sqrt{2}C^{1/2}(T)E^{1/2}(0)\right) <\frac{1 }{2} \), where \( C(\cdot) \) is defined in \eqref{constant_t}, then the following estimates hold for all $t \in [0, \delta)$:
\end{enumerate}
\begin{equation}\label{estimate_psi2}
\Vert \nabla\psi_2(u(t))\Vert \le\frac 12 \left\|\Ar u(t)\right\|, 
\end{equation}

\begin{equation}\label{estimate_energy_existance}
E(t) > \frac{1}{4} \|u_t(t)\|^2 + \frac{1}{4} \left\|\Ar u(t)\right\|^2 +\frac{1}{4}\int^{t}_{t-\tau}
 \Vert \nabla \psi_2(u(s))\Vert^2 ds + \frac{1}{4}\int_{t-\tau}^{t}\vert \alpha(s+\tau)\vert \Vert B^*u_t(s) \Vert^2ds.
\end{equation}
In particular, we arrive at
\begin{equation}\label{U_cdt_E}
E(t) > \frac{1}{4} \|U(t)\|_{\He}^2, 
\end{equation}
for all $t \in [0, \delta)$.
\end{lemma}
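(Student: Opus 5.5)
For item 1, I would evaluate the energy \eqref{function_E} at $t=0$ and control $\psi_1(u_0)$ through (A3). Write $\psi_1(u_0)=\int_0^1 \langle \nabla\psi_1(su_0),u_0\rangle\,ds=\int_0^1 \frac1s\langle \nabla\psi_1(su_0),su_0\rangle\,ds$, using $\psi_1(0)=0$. Then (A3) and the monotonicity of $h_1$ give
\[
|\psi_1(u_0)|\le \int_0^1 s\,h_1(s\|\Ar u_0\|)\|\Ar u_0\|^2\,ds\le \frac12 h_1(\|\Ar u_0\|)\|\Ar u_0\|^2 .
\]
Under $h_1(\|\Ar u_0\|)<\frac12$ this yields $-\psi_1(u_0)> -\frac14\|\Ar u_0\|^2$ (strictly when $u_0\neq 0$), so $E(0)\ge \frac12\|u_1\|^2+\frac14\|\Ar u_0\|^2$ plus nonnegative integral terms. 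This is positive for a nonzero solution: either $(u_0,u_1)\ne 0$, or the history integrals are nonzero. The same computation at a general time $t$ shows $-\psi_1(u(t))\ge -\frac12 h_1(\|\Ar u(t)\|)\|\Ar u(t)\|^2$.

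For item 2, I would use a continuity (bootstrap) argument on $[0,\delta)$. First, the inequalities hold at $t=0$. By the bound just derived, \eqref{estimate_energy_existance} at $t=0$ follows once $h_1(\|\Ar u_0\|)<\frac12$, and this holds because $\|\Ar u_0\|\le 2\sqrt{E(0)}\le 2\sqrt2 C^{1/2}(T)E^{1/2}(0)$, with $h_1$ increasing and $C(T)\ge 1$. For \eqref{estimate_psi2} at $t=0$, I would use (A2) with $v=0$ and $\nabla\psi_2(0)=0$ to get $\|\nabla\psi_2(u_0)\|\le L(R)\|\Ar u_0\|$ with $R=\|\Ar u_0\|$. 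A smallness bound $L(2\sqrt2 C^{1/2}(T)E^{1/2}(0))<\frac12$ then gives the claim, provided $L(\cdot)$ can be taken nondecreasing in $R$, which one may assume after replacing $L(R)$ by $\sup_{r\le R}L(r)$ (or $L$ can be defined as a best Lipschitz constant on balls).

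Next, let $t^*=\sup\{s\in[0,\delta): \eqref{estimate_psi2}\text{ and }\eqref{estimate_energy_existance}\text{ hold on }[0,s)\}$ and suppose $t^*<\delta$. On $[0,t^*)$, \eqref{estimate_psi2} gives \eqref{aggiunta} and \eqref{estimate_energy_existance} gives \eqref{cdt_energy}, so Proposition \ref{pro_1} yields $E(t)\le C(t)E(0)\le C(T)E(0)$, since $C$ is increasing and $t<\delta<T$. Combining this with \eqref{estimate_energy_existance}, I get $\|\Ar u(t)\|^2<4E(t)\le 4C(T)E(0)$, so $\|\Ar u(t)\|<2C^{1/2}(T)E^{1/2}(0)$ on $[0,t^*]$ by continuity. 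This is strictly below the threshold $2\sqrt2 C^{1/2}(T)E^{1/2}(0)$. The hypotheses on $h_1$ and $L$ then give, at $t^*$ and slightly beyond (by continuity of $u$ in $D(\Ar)$), the strict inequalities $h_1(\|\Ar u\|)<\frac12$ and $L(\|\Ar u\|)<\frac12$.

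From these, \eqref{estimate_psi2} follows as at $t=0$, and \eqref{estimate_energy_existance} follows from the $\psi_1$ bound, because $\frac12\|\Ar u\|^2-\psi_1(u)>\frac14\|\Ar u\|^2$ and the remaining terms of $E$ carry coefficient $\frac12>\frac14$. This holds on a neighbourhood of $t^*$, contradicting maximality, so $t^*=\delta$.

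The main obstacle is making this continuation rigorous. It needs continuity of $E$ and of $\|\Ar u\|$ in time, which follow from $U\in C([0,\delta);\He)$ and the continuity of the integral terms. It also needs the strict versus non-strict inequalities to be handled carefully, and a degenerate case to be excluded: when $u(t)=0$ and the other terms vanish, the strict inequality fails, but the nonzero-solution assumption and $E(t)>0$ (from item 1 and the energy comparison) should take care of this. The factor $\sqrt2$ of slack is exactly what turns the non-strict bound at $t^*$ into strict hypotheses beyond it. Finally, \eqref{U_cdt_E} follows immediately from \eqref{estimate_energy_existance}, since $\|U\|_{\He}^2=\|u_t\|^2+\|\Ar u\|^2$ and the dropped terms are nonnegative.
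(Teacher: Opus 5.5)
Your strategy is the paper's: the bound $|\psi_1(u)|\le\frac12 h_1(\|\Ar u\|)\|\Ar u\|^2$ from (A3), then a continuation argument on the maximal interval where both estimates hold, with Proposition \ref{pro_1} giving $E(t)\le C(T)E(0)$ there. The genuine problem is your base case at $t=0$.

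You justify $h_1(\|\Ar u_0\|)<\frac12$ by $\|\Ar u_0\|\le 2E^{1/2}(0)$. But that inequality is exactly $E(0)\ge\frac14\|\Ar u_0\|^2$, the energy estimate you are trying to prove at $t=0$. It is only available once $\psi_1(u_0)$ is controlled, which is what $h_1(\|\Ar u_0\|)<\frac12$ was supposed to provide, so the argument is circular.

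It also cannot be repaired from the hypotheses of item 2 alone, because they involve only $E(0)$. Set $R_0:=2\sqrt2\,C^{1/2}(T)E^{1/2}(0)$. Take $u_1=0$, negligible history terms, and $u_0=\lambda w$ with $\lambda$ slightly below the positive zero of $\lambda\mapsto\frac12\|\Ar(\lambda w)\|^2-\psi_1(\lambda w)$, for a power-type $\psi_1$. Then $E(0)$ is as small as you like, so $h_1(R_0)<\frac12$ and $L(R_0)<\frac12$ hold. Yet $\|\Ar u_0\|$ stays bounded away from $0$, and \eqref{estimate_energy_existance} fails at $t=0$.

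So item 2 must be read with item 1's hypothesis $h_1(\|\Ar u_0\|)<\frac12$ still in force. This is what the paper does when it starts the continuation from \eqref{E1}, and it is what Theorem \ref{Stability} verifies. You need this hypothesis anyway for $E(0)>0$, and hence for the strict slack $2C^{1/2}(T)E^{1/2}(0)<R_0$. Once it is made explicit, your proof goes through.

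Given that, your continuation step is correct and somewhat cleaner than the paper's two-case analysis. You observe that both \eqref{estimate_psi2} and \eqref{estimate_energy_existance} at time $t$ follow pointwise from $\|\Ar u(t)\|\le R_0$. You then use the gap between $2C^{1/2}(T)E^{1/2}(0)$ and $R_0$ to cross $t^*$. The paper instead uses the gap between the constants $\frac12$ and $1$ in the $\psi_2$ estimate to apply Proposition \ref{pro_1} on a slightly longer interval.

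Two small remarks:
\begin{itemize}
\item No monotonicity of $L$ is needed. Since $\|\Ar u(t)\|\le R_0$ and $\nabla\psi_2(0)=0$, you can apply (A2) directly with $R=R_0$ and $v=0$.
\item Your handling of the degenerate case does not work as stated. Item 1 gives only $E(0)>0$, and Proposition \ref{pro_1} only bounds $E(t)$ from above, so nothing yields $E(t)>0$ for $t>0$. The paper passes over the same point. Within your approach, the clean remedy is to prove the non-strict versions of \eqref{estimate_energy_existance} and \eqref{U_cdt_E}. Your pointwise argument gives these directly, and they are all that Proposition \ref{pro_1} and the later bound $\|U(t)\|_{\He}\le 2E^{1/2}(t)$ require.
\end{itemize}
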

\begin{proof}
From the assumption (A3) on the function $\psi_1$, we can write
\begin{equation}\label{cdt_psi_1}
|\psi_1(u)|\leq \int_0^1 |\langle \nabla \psi_1(su), u \rangle| ds 
\leq \|\Ar u\|^2 \int_0^1 s h_1(s\|\Ar u\|) ds
\leq \frac{1}{2} h_1(\|\Ar u\|) \|\Ar u\|^2. 
\end{equation}

Hence, from the assumption $h_1(\|\Ar u_0\|) < \frac{1 }{2}$
and from \eqref{cdt_psi_1}, we have that

\begin{eqnarray*}
E(0)&=& \frac{1}{2} \|u_1\|^2 + \frac{1}{2} \|\Ar u_0\|^2 - \psi_1(u_0) +\frac{1}{2}\int^{0}_{-\tau}
 \Vert \nabla \psi_2(f_0(s))\Vert^2 ds \\
& &+ \frac{1}{2}\int_{-\tau}^{0}\vert \alpha(s+\tau)\vert \Vert B^*g_0(s) \Vert^2ds 
 \nonumber \\
&\geq& \frac{1}{2} \|u_1\|^2 + \frac{1}{2} \|\Ar u_0\|^2 
- \frac{1}{2} h_1(\|\Ar u_0\|) \|\Ar u_0\|^2 +\frac{1}{2}\int^{0}_{-\tau}
 \Vert \nabla \psi_2(f_0(s))\Vert^2 ds \\
& & + \frac{1}{2} \int_{-\tau}^0 |\alpha(s+\tau)| \cdot \|B^* g_0(s)\|^2 ds \nonumber \\
&>& \frac{1}{4} \|u_1\|^2 + \frac{1}{4} \|\Ar u_0\|^2+\frac{1}{4}\int^{0}_{-\tau}
 \Vert \nabla \psi_2(f_0(s))\Vert^2 ds + \frac{1}{4} \int_{-\tau}^0 |\alpha(s+\tau)| \cdot \|B^* g_0(s)\|^2 ds.
\end{eqnarray*}
So, we have 
\begin{equation}\label{E1}
 E(0)>\frac{1}{4} \|u_1\|^2 + \frac{1}{4} \|\Ar u_0\|^2+\frac{1}{4}\int^{0}_{-\tau}
 \Vert \nabla \psi_2(f_0(s))\Vert^2 ds + \frac{1}{4} \int_{-\tau}^0 |\alpha(s+\tau)| \cdot \|B^* g_0(s)\|^2 ds >0,
\end{equation}
since $u$ a non zero solution.

Note also that, from \eqref{E1},
$$
\left\|\Ar u_0\right\| <2 E^{1/2}(0)<2\sqrt{2} C^{1/2}(T) E^{1/2}(0),
$$
being $C(T)>1.$
Then, from the lemma's assumption $L(2\sqrt{2} C^{1/2}(T) E^{1/2}(0))<\frac 12$, and the assumptions (A2)-(A3), it follows that
 \begin{equation}\label{E2}
 \Vert \nabla\psi_2(u_0)\Vert \le L (\|\Ar u_0\| ) \left\|\Ar u_0\right\|\le \frac 12 \left\|\Ar u_0\right\| .
\end{equation}

 To establish the second claim, we proceed by contradiction. Denote
\begin{multline*}
 r :=\sup \bigg\{ s \in [0, \delta) : \; \Vert \nabla\psi_2(u(t))\Vert \le \frac 12 \left\|\Ar u(t)\right\|\ \text{and} \\
E(t)> \frac{1}{4} \|u_t(t)\|^2 + \frac{1}{4} \|\Ar u(t)\|^2 +\frac{1}{4}\int^{t}_{t-\tau}
 \Vert \nabla \psi_2(u(s))\Vert^2 ds + \frac{1}{4}\int_{t-\tau}^{t}\vert \alpha(s+\tau)\vert \Vert B^*u_t(s) \Vert^2ds \\
 \text{hold for all} \ t \in [0, s] .\bigg\}
\end{multline*}
From \eqref{E1} and \eqref{E2}, the number $r$ is well-defined. We have to show that $r=\delta.$ 
We suppose by contradiction that $r < \delta$. Then, we have to distinguish two cases.
\\{\bf Case i)} Suppose that 
$$\Vert \nabla\psi_2(u(t))\Vert \le\frac 12 \left\|\Ar u(t)\right\|, \forall\ t\in [0,r),\hspace{8,5 cm}$$ 
$$
E(t) > \frac{1}{4} \|u_t(t)\|^2 + \frac{1}{4} \|\Ar u(t)\|^2 +\frac{1}{4}\int^{t}_{t-\tau} 
 \Vert \nabla \psi_2(u(s))\Vert^2 ds + \frac{1}{4} \int_{t - \tau}^t |\alpha(s+\tau)| \cdot \|B^* u_t(s)\|^2 ds ,\ \forall\ t\in [0,r),
$$
and
$$ E(r) = \frac{1}{4} \|u_t(r)\|^2 + \frac{1}{4} \|\Ar u(r)\|^2+\frac{1}{4}\int^{r}_{r-\tau}
 \Vert \nabla \psi_2(u(s))\Vert^2 ds + \frac{1}{4} \int_{r - \tau}^r |\alpha(s+\tau)| \cdot \|B^* u_t(s)\|^2 ds.
$$

In particular, the last identity implies that
\[
\frac{1}{4} \|u_t(r)\|^2+\frac{1}{4} \Vert \Ar u(r)\Vert^2 \leq E(r).
\]
Thus, the estimates \eqref{aggiunta} and \eqref{cdt_energy} are satisfied on $[0,r).$ Then, from Proposition \ref{pro_1} and the monotonicity of $h_1$, we get
\begin{equation}\label{cdt_r_2}
 h_1\left(\Vert \Ar u(r)\Vert\right) \leq h_1\left(2{E^{1/2}(r)} \right)\leq h_1\left(2\sqrt{2}C^{1/2}(T)E^{1/2}(0)\right) <\frac{1 }{2}.
\end{equation}
Finally, using \eqref{cdt_psi_1} and \eqref{cdt_r_2}, we can conclude that
\begin{eqnarray*}
E(r) &=& \frac{1}{2} \| u_t(r) \|^2 + \frac{1}{2} \| \Ar u(r) \|^2 - \psi_1(u(r))+\frac{1}{2}\int^{r}_{r-\tau}
 \Vert \nabla \psi_2(u(s))\Vert^2 ds \\
& &+ \frac{1}{2}\int_{r-\tau}^{r}\vert \alpha(s+\tau)\vert \Vert B^*u_t(s) \Vert^2ds \\
&>& \frac{1}{4} \| u_t(r) \|^2 + \frac{1}{4} \| \Ar u(r) \|^2 +\frac{1}{4}\int^{r}_{r-\tau}
 \Vert \nabla \psi_2(u(s))\Vert^2 ds + \frac{1}{4} \int_{r - \tau}^r |\alpha(s+\tau)| \| B^* u_t(s) \|^2 ds.
\end{eqnarray*}
contradicting the maximality of $r$. This implies $r = \delta$.
\\{\bf Case ii)} Suppose that 
$$\Vert \nabla\psi_2(u(t))\Vert \le\frac 12 \left\|\Ar u(t)\right\|, \forall\ t\in [0,r],\hspace{8,5 cm}$$
$$
E(t) > \frac{1}{4} \|u_t(t)\|^2 + \frac{1}{4} \|\Ar u(t)\|^2 +\frac{1}{4}\int^{t}_{t-\tau} 
 \Vert \nabla \psi_2(u(s))\Vert^2 ds + \frac{1}{4} \int_{t - \tau}^t |\alpha(s+\tau)| \cdot \|B^* u_t(s)\|^2 ds ,\ \forall\ t\in [0,r],
$$
and
\begin{equation}\label{E3}
\Vert \nabla\psi_2(u(s))\Vert >\frac 12 \left\|\Ar u(s)\right\|, \ s\in (r, r+\epsilon),
\end{equation}
for some $\epsilon >0$ with $ r+\epsilon<\delta.$
However, by continuity, there exists $\epsilon'>0$ (take $\epsilon'<\epsilon$) such that the lower bound on the energy still holds on $[r, r+\epsilon'],$
namely,
$$
E(t) > \frac{1}{4} \|u_t(t)\|^2 + \frac{1}{4} \|\Ar u(t)\|^2 +\frac{1}{4}\int^{t}_{t-\tau} 
 \Vert \nabla \psi_2(u(s))\Vert^2 ds + \frac{1}{4} \int_{t - \tau}^t |\alpha(s+\tau)| \cdot \|B^* u_t(s)\|^2 ds ,
$$
for all $t\in [0, r+\epsilon').$
Moreover, for $\epsilon'$ small enough, we have
$$
\Vert \nabla\psi_2(u(s))\Vert \le \|\Ar u(s)\|, \ s\in (r, r+\epsilon').
$$
Therefore, \eqref{aggiunta} and \eqref{cdt_energy} are satisfied and we can apply Proposition \ref{pro_1} on the larger interval $[0, r+\epsilon').$ We then obtain
$$
 \Vert \Ar u(r+\epsilon')\Vert\leq 2{E^{1/2}(r+\epsilon')} \leq 2\sqrt{2}C^{1/2}(T)E^{1/2}(0),$$
that together with (A2)-(A3) implies
$$\Vert \nabla\psi_2(u(r+\epsilon'))\Vert \le L\left (2\sqrt{2}C^{1/2}(T)E^{1/2}(0)\right )\left\|\Ar u(r+\epsilon')\right\|<\frac 12\left\|\Ar u(r+\epsilon')\right\|,$$
in contradiction with \eqref{E3}. Then, it must be $r=\delta.$
\end{proof}

 We are now in a position to establish that, for sufficiently small initial data, the solutions to system \eqref{p} are always defined and satisfy the well-posedness assumptions \eqref{U_rho_th}-\eqref{serve} of Theorem \ref{th_stability}.

\begin{theorem}\label{Stability}
Assume \eqref{cdt_alpha_2} and \eqref{cdt_alpha}. Then, there exist $\rho>0$ and $C_\rho>0,$ with
$L(C_\rho)<\frac {\omega-\omega'}{M(1+e^{\omega\tau})}$, for which if $\Phi=(f_0,g_0)\in C([-\tau,0]; \He)$ satisfies
\begin{equation}\label{smallnessID}
	\|u_1\|^2 + \|\Ar u_0\|^2 +\int^{0}_{-\tau}
	\Vert \nabla \psi_2(u_0(s))\Vert^2 ds +\int_{-\tau}^0 |\alpha(s+\tau)| \cdot \|B^* g_0(s)\|^2 \, ds < \rho^2,
\end{equation}
and
\begin{equation}\label{additional}
	\max _{s\in [-\tau,0]} \Vert \Phi(s)\Vert_{\mathcal H}<\rho,
	\end{equation}
then, problem \eqref{P_abstract} has a unique global solution 
satisfying 
$$\Vert U(t)\Vert_{\mathcal H}\le C_\rho,\quad \forall \ t\ge 0.$$
\end{theorem}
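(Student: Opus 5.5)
We combine the local existence result (Lemma \ref{lem_abst_exis}), the energy bound (Proposition \ref{pro_1}), Lemma \ref{exis_lemma} and the decay estimate of Theorem \ref{th_stability}, in an iterative argument on intervals of length $T$.

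\textbf{Choice of constants.} First fix $C_\rho>0$ so small that $L(C_\rho)<\frac{\omega-\omega'}{M(1+e^{\omega\tau})}$, $L(C_\rho)<\frac12$ and $h_1(C_\rho)<\frac12$. This is possible since $L(R)\to0$ as $R\to 0$ by (A2), and $h_1$ is continuous and increasing with $h_1(0)$ presumably $<\frac12$. Next fix a time $T>0$ large enough that the decay factor of Theorem \ref{th_stability} satisfies
\[
Me^{\gamma}e^{-(\omega-\omega'-ML(C_\rho)(1+e^{\omega\tau}))(T-\tau)}\le \kappa,
\]
with $\kappa$ small, to be chosen below. By \eqref{cdt_alpha}, the constant $C(T)$ in \eqref{constant_t} is bounded by $e^{2T+4b\sigma(T/\tau+1)}$, a quantity independent of the data. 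Finally choose $\rho$ small in terms of $C_\rho,T,M,\gamma,b,\sigma,\omega,\tau$.

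\textbf{Existence on $[0,T]$.} By (A3) and \eqref{cdt_psi_1}, \eqref{smallnessID} gives $E(0)\le \rho^2$ up to a harmless factor, since $|\psi_1(u_0)|\le\frac12 h_1\|\Ar u_0\|^2$. Hence $2\sqrt2C^{1/2}(T)E^{1/2}(0)\le c\,C(T)^{1/2}\rho\le C_\rho$ once $\rho$ is small. Lemma \ref{exis_lemma} then applies on any existence interval $[0,\delta)\subset[0,T)$. It yields $\|U(t)\|^2<4E(t)$ together with \eqref{aggiunta} and \eqref{cdt_energy}. Proposition \ref{pro_1} then gives $\|U(t)\|\le 2C(T)^{1/2}E(0)^{1/2}\le C_\rho$. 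Since the solution stays bounded, it cannot blow up, so the local solution of Lemma \ref{lem_abst_exis} extends to $[0,T]$. On $[-\tau,0]$ the bound $\|U\|\le C_\rho$ comes from \eqref{additional}.

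\textbf{Decay and iteration.} Now \eqref{U_rho_th} holds with $C'=C_\rho$ on $[-\tau,T]$, so Theorem \ref{th_stability} applies. The bracket in \eqref{stab_estimate_2}, including $\beta$, is bounded by $K\rho$, with $K$ depending on $\tau,\omega,b,\sigma,L(C_\rho)$. Hence
\[
\|U(t)\|\le \kappa K\rho \quad\text{for } t\in[T-\tau,T].
\]
Choosing $\kappa K\le$ a small absolute constant, the translated history $U(T+\cdot)$ on $[-\tau,0]$ again satisfies \eqref{additional} and \eqref{smallnessID}. For \eqref{smallnessID} the integral terms are controlled using $\|\nabla\psi_2(u)\|\le\frac12\|\Ar u\|$ and $\int|\alpha|\le\sigma$. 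Note that \eqref{cdt_alpha_2}, \eqref{cdt_alpha} and $C(T)$ are translation-invariant in the relevant sense. We then restart at time $T$ and repeat, obtaining a global solution with $\|U(t)\|\le C_\rho$ for all $t\ge0$. Uniqueness follows from Lemma \ref{lem_abst_exis}.

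\textbf{Main obstacle.} The delicate step is closing the iteration. The bound of Theorem \ref{th_stability} on the restarted interval involves $\beta$ and the $\alpha$-weighted history integral, so I must check carefully that the shifted smallness \eqref{smallnessID} is recovered with the same $\rho$, uniformly in the number of restarts. An alternative that avoids restarting is a single continuity argument on the whole $[0,\infty)$. In that version, Theorem \ref{th_stability} directly gives $\|U(t)\|\le K\rho<C_\rho$ as long as $\|U\|\le C_\rho$, which closes the bootstrap. I would try this simpler version first, using Lemma \ref{exis_lemma} only to guarantee nonblow-up on bounded intervals.
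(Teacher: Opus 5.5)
Your closing suggestion, the single continuity argument on $[0,\infty)$, is correct and takes a genuinely different, much shorter route than the paper. It is the version to write up. To make it precise:
\begin{itemize}
\item First fix $C_\rho$ with $L(C_\rho)<\frac{\omega-\omega'}{M(1+e^{\omega\tau})}$. Set $\mu:=\omega-\omega'-ML(C_\rho)(1+e^{\omega\tau})>0$ and $K:=1+e^{\omega\tau}\bigl(\tau L(C_\rho)+(1+b)\sigma\bigr)$.
\item Then take $\rho<C_\rho/(Me^{\gamma}K)$. This also gives $\rho<C_\rho$, since $M\ge 1$.
\item Let $[0,T_{\max})$ be the maximal existence interval and $T_1$ the supremum of the times up to which $\|U\|_{\He}\le C_\rho$. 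Then \eqref{U_rho_th} holds on $[-\tau,T_1)$ by \eqref{additional}.
\item Theorem \ref{th_stability}, together with $\int_0^\tau|\alpha|\le\sigma$, gives $\|U(t)\|_{\He}\le Me^{\gamma}K\rho\,e^{-\mu t}<C_\rho$ on $[0,T_1)$. Note the factor $Me^{\gamma}$, which is missing from your ``$K\rho$''.
\item By continuity $T_1=T_{\max}$. The blow-up alternative of the local theory then gives $T_{\max}=\infty$. State this alternative explicitly, since Lemma \ref{lem_abst_exis} does not: on each interval $[k\tau,(k+1)\tau]$ the delayed terms are known and $F$ is Lipschitz on bounded sets.
\end{itemize}
This argument uses only \eqref{additional}, (F1)--(F2), \eqref{cdt_alpha} and \eqref{cdt_alpha_2}. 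The energy $E$, Lemma \ref{exis_lemma}, Proposition \ref{pro_1}, the function $h_1$ and condition \eqref{smallnessID} play no role. You do not need Lemma \ref{exis_lemma} for non-blow-up either.

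The paper instead obtains boundedness window by window. It uses the energy functional with the extra term $\frac12\int_{t-\tau}^t\|\nabla\psi_2(u(s))\|^2ds$, together with Lemma \ref{exis_lemma} and Proposition \ref{pro_1}. These give $\|U\|_{\He}\le 2\sqrt{C_T^*}\rho$ on each $[nT,(n+1)T]$, with $C_T^*$ uniform in $n$ thanks to \eqref{cdt_alpha}. Theorem \ref{th_stability} is used only to recover smallness of the shifted history at $(n+1)T$. That route also yields energy information: positivity of $E$ and its two-sided comparison with $\|U\|_{\He}^2$. It is, however, much heavier. Yours gives boundedness and the decay rate in one stroke, with explicit constants and under weaker hypotheses on the data.

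Your primary, restart-based argument has a genuine gap at the claim that \eqref{cdt_alpha_2} is translation invariant. It is not. Shifting by $nT$ one only gets $Mbe^{\omega\tau}\int_{nT}^{nT+t}|\alpha(s+\tau)|\,ds\le(\gamma+\omega' nT)+\omega' t$. This is essentially sharp when $\alpha(\cdot+\tau)$ vanishes on $[0,nT]$ and then spends the accumulated budget in a burst. Condition \eqref{cdt_alpha} alone only gives the growth rate $Mbe^{\omega\tau}\sigma/\tau$, which may exceed $\omega$.

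Consequently, applying Theorem \ref{th_stability} to the problem restarted at $nT$ yields the factor $Me^{\gamma+\omega' nT}e^{-\mu(T-\tau)}$. This is not uniformly small in $n$, so the iteration does not close. The terms you flagged, $\beta$ and the $\alpha$-weighted history integral, are harmless, since both are controlled by the shifted maximum.

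The paper avoids this by always applying Theorem \ref{th_stability} on $[0,(n+1)T]$ from time $0$ with the original data, once the a priori bound holds on all of $[-\tau,(n+1)T]$. The smallness at $(n+1)T$ then only improves with $n$.
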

\begin{proof}
Let $T$ be a fixed time with $T \geq \tau$ such that,
	\begin{equation*}
C_T :=
 2M^2 e^{2\gamma}\max \left\{\left(1+ e^{\tau} (b\sigma+\tau)\right), e^{\omega\tau}\right \}\left(1 + e^{2\omega \tau} (\sigma+\tau)^2 \right) e^{-(\omega - \omega') T}
 < 1. 
\end{equation*}
Also, we denote
\begin{equation*}
C^*_T :=\sup \left\{e^{2T+2b\int_{nT}^{(n+1)T} \left(\vert\alpha(s)\vert+\vert\alpha(s+\tau)\vert\right) ds},\; n\in \N \right\}.
\end{equation*}
Condition \eqref{cdt_alpha} guarantees that $C_T^*$ is finite. Note that $C_T^* \geq C(T)$, where $C(T)$ is defined in \eqref{constant_t}. 
Next, let $\rho > 0$ be chosen such that
\begin{equation}\label{rho_condition}
\rho \leq \frac{1}{2 \sqrt{2C^*_T}} h^{-1}\left(\frac{1}{2}\right)\quad \mbox{and}\quad L\left(2 \sqrt{2C^*_T}\rho\right)<\frac12,
\end{equation}
and assume the conditions \eqref{smallnessID} and \eqref{additional} are verified, i.e.,
\begin{equation*}
\|u_1\|^2 + \|\Ar u_0\|^2 +\int^{0}_{-\tau}
 \Vert \nabla \psi_2(u_0(s))\Vert^2 ds +\int_{-\tau}^0 |\alpha(s+\tau)| \cdot \|B^* g_0(s)\|^2 \, ds < \rho^2, 
\end{equation*}
and
$$\max _{s\in [-\tau,0]} \Vert \Phi(s)\Vert_{\mathcal H}<\rho.$$
This implies (by considering the abstract setting, the system \eqref{P_abstract}) that
\begin{equation}\label{rho_abstract}
\|U_0\|^2 +\int_{0}^{\tau}
 \Vert \nabla \psi_2(u_0(s-\tau))\Vert^2 ds + \int_0^\tau |\alpha(s)| \cdot \|B^*g_0(s-\tau)\|^2\, ds < \rho^2.
\end{equation}

From Lemma \ref{lem_abst_exis}, we know that there exists a solution \( u \) to problem \eqref{p} on a time interval \( [0,\delta) \). Now, we have
\[
h_1\left(\left\|\Ar u_0\right\| \right) \leq h_1(\rho) \leq h_1\left( \frac{1}{2\sqrt{2C^*_T}} h_1^{-1}\left(\frac{1}{2}\right) \right) < \frac{1}{2},
\]

using the fact that \( C^*_T > 1 \). Thus, by Lemma \ref{exis_lemma}, \( E(0) > 0 \). Moreover, from \eqref{cdt_psi_1}, we obtain
\[
E(0) \leq \frac{1}{2} \|u_1\|^2 + \frac{3}{4} \|\Ar u_0\|^2 +\frac{1}{2}\int^{0}_{-\tau}
 \Vert \nabla \psi_2(u_0(s))\Vert^2 ds + \frac{1}{2} \int_{-\tau}^0 |\alpha(s+\tau)| \cdot \|B^* g_0(s)\|^2 \, ds \leq \rho^2,
\]
which gives us
\begin{equation}\label{h_cdt_1_2}
 h_1\left(2 \sqrt{2}\sqrt{C^*_T E(0)} \right) < h_1\left( 2\sqrt{2}\sqrt{C^*_T} \rho \right) < h_1\left( h_1^{-1}\left( \frac{1}{2} \right) \right) = \frac{1}{2},
\end{equation}
and
\begin{equation}\label{suL}
L\left(2 \sqrt{2}\sqrt{C^*_T E(0)}\right) \leq L\left( 2\sqrt{2}\sqrt{C^*_T} \rho \right) < \frac{1}{2}.
\end{equation}
Therefore, by applying Lemma \ref{exis_lemma} once more, we can conclude that \eqref{estimate_psi2} and \eqref{U_cdt_E} hold for all \( t \in [0, \delta) \). Then, using Proposition \ref{pro_1}, we obtain
\begin{multline}\label{eq_E_bound}
\qquad 0< \frac{1}{4} \| u_t(t) \|^2 + \frac{1}{4} \| \Ar u(t) \|^2 +\frac{1}{4}\int^{t}_{t-\tau}
 \Vert \nabla \psi_2(u(s))\Vert^2 ds\\ + \frac{1}{4} \int_{t - \tau}^t |\alpha(s+\tau)| \| B^* u_t(s) \|^2 ds \leq E(t)\leq C^*_T E(0), \qquad \qquad
\end{multline}
for any \( t \in [0, \delta) \). Then, we can extend the solution on the whole interval \( [0, T] \) and \eqref{eq_E_bound} continues to hold on $[0,T].$
 From \eqref{h_cdt_1_2} and \eqref{suL}, for \( t = T \), we have
\begin{equation}\label{sqrt_A_u_t}
h_1\left( \| \Ar u(T)\| \right) \leq h_1(2\sqrt{E(T)}) \leq h_1\left( 2\sqrt{C^*_T E(0)} \right) \leq h_1\left(2 \sqrt{C^*_T} \rho \right) < \frac{1}{2}.
\end{equation}

Moreover, from the assumption \eqref{rho_abstract} on the initial datum $\Phi$ and
from \eqref{eq_E_bound}, we arrive at
\[
\frac{1}{4} \|U(t)\|_{\He}^2 \leq E(t) \leq C^*_T E(0) < C^*_T \rho^2, \quad \forall t \in [0, T],
\]
and so
\begin{equation}\label{U_rdo_leq}
\|U(t)\| \leq C_{\rho} := 2\sqrt{C^*_T} \rho,
\end{equation}
for any \( t \in [0, T] \). By choosing smaller values of \( \rho \), we assume that \( \rho \) is such that \( L(C_\rho) < \frac{\omega - \omega'}{2M(1+e^\tau)} \). Thus, the well-posedness assumption \eqref{U_rho_th} is satisfied on the interval \( [0, T] \). Applying Theorem \ref{th_stability}, we obtain the following estimate:
\begin{equation}\label{eq_23}
\|U(t)\| \leq M e^{\gamma} \left( \|U_0\| +\beta + \int_0^\tau e^{\omega s} |\alpha(s)| \cdot \|\Phi(s - \tau)\|_{\He} ds \right) e^{-\frac{\omega - \omega'}{2} t}, 
\end{equation}
for any $t \in [0, T]$. By using assumption \eqref{cdt_alpha}, \eqref{additional}, and H\"{o}lder inequality, we get
\[
\int_0^{\tau} |\alpha(s)| e^{\omega s} \|\Phi(s - \tau)\|_{\He} ds \leq e^{\omega \tau} \sqrt{\int_0^{\tau} |\alpha(s)| ds }\sqrt{\int_0^{\tau} |\alpha(s)| \cdot \|\Phi(s - \tau)\|_{\He}^2 ds }\leq e^{\omega \tau}{\sigma} \rho.
\]
Analogously, by \eqref{beta_eq}, \eqref{additional}, and \eqref{suL}, we obtain
$$\beta\le \frac 12 \int_0^\tau e^{\omega s} \Vert \Phi (s-\tau)\Vert_{\mathcal{H}} ds \le \frac 12 e^{\omega\tau}\rho \tau.
$$

From \eqref{eq_23} we thus get
\begin{eqnarray*}\label{eq_24}\notag
 \|U(t)\|^2_{\He} &\leq& M^2 e^{2\gamma} \left( \|U_0\| + e^{\omega \tau} \rho (\sigma+\tau) \right)^2 e^{-(\omega - \omega')t}, \\
 &\leq&2 M^2 e^{2\gamma} \left( \|U_0\|^2 + e^{2\omega \tau} (\sigma+\tau)^2 \rho^2 \right) e^{-(\omega - \omega')t}, 
\end{eqnarray*}
which, by using \eqref{rho_abstract}, leads to
\begin{equation}\label{U_estimate_bound_above}
 \|U(t)\|^2_{\He} \leq 2M^2 \rho^2 e^{2\gamma} \left(1 + e^{2\omega \tau} (\sigma+\tau)^2 \right) e^{-(\omega - \omega') t}, 
\end{equation}
for all \( t \in [0, T] \). Also, for all \( s \in [T - \tau, T] \subseteq [0, T] \), \eqref{U_estimate_bound_above} yields
\begin{eqnarray*}
 \Vert u_t(s)\Vert^2 &\leq & \|U(s)\|_{\mathcal{H}}^2 \leq 2M^2 \rho^2 e^{2\gamma} \left(1 + e^{2\omega \tau} (\sigma+\tau )^2 \right) e^{-(\omega - \omega') s} \\
 &\leq & 2M^2 \rho^2 e^{2\gamma+\omega\tau} \left(1 + e^{2\omega \tau} (\sigma+\tau)^2 \right) e^{-(\omega - \omega') T}, 
\end{eqnarray*}
Consequently, by \eqref{cdt_alpha},
\begin{equation}\label{nuova1}
\begin{array}{l}
\displaystyle{ \int_{T-\tau}^{T} |\alpha(s)| \cdot \|B^* u_t(s)\|^2 ds \leq 2M^2 \rho^2 e^{2\gamma+\omega\tau} b\left(1 + e^{2\omega \tau} (\sigma+\tau)^2 \right) e^{-(\omega - \omega') T} \int_{T-\tau}^{T} |\alpha(s)| ds} \\
 \displaystyle{ \hspace{4.3 cm}\leq 2M^2 \rho^2 e^{2\gamma+\omega\tau} b\sigma\left(1 + e^{2\omega \tau} (\sigma+\tau)^2 \right) e^{-(\omega - \omega') T}. }
 \end{array}
\end{equation}
Moreover, from
 \begin{equation*}
 \int^{T}_{T-\tau} \Vert \nabla \psi_2(u(s))\Vert^2 ds \leq \int^{T}_{T-\tau} L^2\left(\left\|\sqrt{A}u(s)\right\|\right) \left\|\sqrt{A}u(s)\right\|^2ds,
 \end{equation*}
since we have
$$ L\left( \| \Ar u(s)\| \right) \leq L (2\sqrt{E(s)}) \leq L\left( 2\sqrt{2}\sqrt{C_T^* E(0)} \right) < \frac{1}{2}<1,$$
and 
\begin{eqnarray*}
 \left\|\sqrt{A}u(s)\right\|^2 &\leq & \|U(s)\|_{\mathcal{H}}^2 \leq 2M^2 \rho^2 e^{2\gamma} \left(1 + e^{2\omega \tau} (\sigma+\tau)^2 \right) e^{-(\omega - \omega') s} \\
 &\leq & 2M^2 \rho^2 e^{2\gamma+\omega\tau} \left(1 + e^{2\omega \tau} (\sigma+\tau)^2 \right) e^{-(\omega - \omega') T}, 
\end{eqnarray*}
it results
\begin{equation}\label{cdt_bound_psi}
 \int^{T}_{T-\tau} \Vert \nabla \psi_2(u(s))\Vert^2 ds \leq 2M^2 \rho^2 e^{2\gamma+\omega\tau} \tau\left(1 + e^{2\omega \tau} (\sigma+\tau)^2 \right) e^{-(\omega - \omega') T} .
\end{equation} 
This last fact, together with \eqref{U_estimate_bound_above} and \eqref{nuova1}, implies that
\begin{multline}\label{U_T_rho}
 \|U(T)\|_{\mathcal{H}}^2 + \int^{T}_{T-\tau} \Vert \nabla \psi_2(u(s))\Vert^2 ds + \int_{T-\tau}^{T} |\alpha(s)| \cdot \|B^* u_t(s)\|_{\mathcal{H}}^2 \, ds \\
 \leq 2M^2 \rho^2 e^{2\gamma}\left(1+ e^{\omega\tau} (b\sigma+\tau)\right)\left(1 + e^{2\omega \tau} (\sigma+\tau)^2 \right) e^{-(\omega - \omega') T} \leq C_T \rho^2 < \rho^2.
\end{multline}
Moreover, from \eqref{U_estimate_bound_above}, we have
\begin{equation}\label{U_rho}
\max_{s\in [T-\tau, T]} \|U(s)\|_{\mathcal{H}}^2< 
 2M^2 \rho^2 e^{2\gamma+\omega\tau} \left(1 + e^{2\omega \tau} (\sigma+\tau)^2 \right) e^{-(\omega - \omega') T}\le C_T\rho^2<\rho^2.
\end{equation}

Conditions \eqref{U_T_rho} and \eqref{U_rho} allow us to replicate the previous arguments on the interval \([T, 2T]\). Specifically, we consider the initial value problem
\begin{equation}\label{P_abstract_222}
\left\{
\begin{array}{ll}
W_t(t) = \A W(t) +\alpha(t)\B W(t - \tau)+ F(W(t), W(t - \tau)), &\quad t \in [T, 2T], \\
W(s) = U(s), \quad s \in [T - \tau, T],
\end{array}
\right.
\end{equation}
where \(U(\cdot)\) denotes the solution to \eqref{p} on \([0, T]\).
Now, let us introduce the energy functional for the solution:
\begin{eqnarray}\label{function_E_222} \notag
 \mathcal{E}(t)&=& \frac{1}{2}\left\|w_{t}(t)\right\|^{2}+\frac{1}{2}\left\|\sqrt{A}w(t)\right\|^{2}-\psi_1(u(t))+\frac{1}{2}\int^{t}_{t-\tau}
 \Vert \nabla \psi_2(w(s))\Vert^2 ds \\
& &+ \frac{1}{2}\int_{t-\tau}^{t}\vert \alpha(s+\tau)\vert \Vert B^*w_t(s) \Vert^2ds . 
\end{eqnarray}
Observe that \(\mathcal{E}(T) = E(T)\). By Lemma \ref{lem_abst_exis}, problem \eqref{P_abstract_222} with initial data \(U(s)\), \(s \in [T - \tau, T]\), admits a unique local solution \(W(\cdot)\) on \([T, T + \delta)\) given by Duhamel's formula:
\begin{equation}\label{Formule_W_2T}
W(t) = S(t - T) U(T) + \int_T^t S(t - T - s) \bigl[\alpha(s)\B W(s - \tau)+ F(W(s), W(s - \tau))\bigr] \, ds,
\end{equation}
for all \(t \in [T, T + \delta)\). We may assume \(\delta \leq \tau\) and that \(W\) is nontrivial; otherwise, \eqref{U_rho} holds trivially for all \(t \geq T\).
From \eqref{sqrt_A_u_t}, we have \(\mathcal{E}(T) > 0\). If
 $$ \mathcal{E}(t) \geq \frac{1}{4} \|w_t(t)\|^2+\frac{1}{4} \|\sqrt{A} w(t)\|^2,$$
 and 
$\Vert \nabla\psi_2(w(t))\Vert\le \Vert \sqrt{A}w(t) \Vert$, for all \(t \in [T, T + \delta)\), then, following Proposition \ref{pro_1}, we obtain the growth estimate
\begin{equation}\label{estiamte_energy_22}
\mathcal {E}(t)\leq e^{2\int_{T}^{t}[ 1+ \left(\vert\alpha(s)\vert+\vert\alpha(s+\tau)\vert\right)b] ds}\mathcal{E}(T), \quad \forall t \in [T, T + \delta) ,
\end{equation}
Additionally, by applying the same reasoning as in Lemma \ref{exis_lemma} and noting that $$C^*_T \geq e^{2\int_{T}^{2T}[ 1+ \left(\vert\alpha(s)\vert+\vert\alpha(s+\tau)\vert\right)b] ds},$$ if
\begin{equation}\label{cdt_h_T_2T}
 h_1\left(2\sqrt{2}\sqrt{C^*_T\mathcal{E}(T)}\right) <\frac{1 }{2} \quad \mbox{and} \quad L \left(2\sqrt{2}\sqrt{C^*_T\mathcal{E}(T)}\right) <\frac{1 }{2},
\end{equation}
then, for every $t \in [T, T + \delta)$,
\begin{eqnarray}\label{function_E_333} \notag
 \mathcal{E}(t)&>& \frac{1}{4}\left\|w_{t}(t)\right\|^{2}+\frac{1}{4}\left\|\sqrt{A}w(t)\right\|^{2}+\frac{1}{4}\int^{t}_{t-\tau}
 \Vert \nabla \psi_2(w(s))\Vert^2 ds \\
& &+ \frac{1}{4}\int_{t-\tau}^{t}\vert \alpha(s+\tau)\vert \Vert B^*w_t(s) \Vert^2ds ,
\end{eqnarray}
and 
	\begin{equation}\label{quasi}
	\Vert \nabla\psi_2(w(t))\Vert\le \Vert \sqrt{A}w(t) \Vert.
	\end{equation}
This implies, in particular,
\begin{equation}\label{E_W_cdt}
 \mathcal{E}(t)>\frac{1}{4}\left\|W(t)\right\|_\He^{2}, \quad \forall t \in [T, T + \delta).
\end{equation} 
Now, observe that, by \eqref{U_T_rho}, we have $\mathcal{E}(T)<\rho$, thus, condition \eqref{cdt_h_T_2T} is satisfied. Consequently, \eqref{function_E_333}, \eqref{quasi}, and \eqref{E_W_cdt} hold for all $t \in [T, T + \delta).$
As a result, the inequality \eqref{estiamte_energy_22} is fulfilled. Combining \eqref{estiamte_energy_22} and \eqref{function_E_333}, we finally get
\begin{multline}\label{eq_E_bound_T}
\frac{1}{4} \|w_t(t)\|^2 + \frac{1}{4} \|\Ar w(t)\|^2 +\frac{1}{4}\int^{t}_{t-\tau}
 \Vert \nabla \psi_2(w(s))\Vert^2 ds \\
 + \frac{1}{4} \int_{t - \tau}^t |\alpha(s+\tau)| \cdot \|B^* w_t(s)\|^2 ds< \mathcal{E}(t) \leq e^{2\int_{T}^{2T}[ 1+ \left(\vert\alpha(s)\vert+\vert\alpha(s+\tau)\vert\right)b] ds} \mathcal{E}(T) \leq C_T^* \mathcal{E}(T).
\end{multline}

Therefore, the solution $W$ remains bounded, allowing us to extend it to the whole interval $[T, 2T]$. Furthermore, the estimates \eqref{eq_E_bound_T} and \eqref{E_W_cdt} remain valid for every $t$ in $[T, 2T]$. Consequently, we obtain
\begin{equation}\label{W_rho_leq}
\|W(t)\|_\He \leq 2\sqrt{C_T^* \mathcal{E}(T)} \leq 2\sqrt{C_T^*} \rho = C_\rho.
\end{equation}
By combining the two partial solutions \eqref{formule_U_rho} and \eqref{Formule_W_2T} to \eqref{P_abstract} obtained on the time intervals $[0, T]$ and $[T, 2T]$, respectively, we establish the existence of a unique solution $U \in C([0, 2T]; \He)$ to \eqref{P_abstract} defined on $[0, 2T]$ and satisfying Duhamel's formula \eqref{formule_U_rho} for all $t \in [0, 2T]$.

Additionally, from \eqref{U_rdo_leq} and \eqref{W_rho_leq}, the solution $U$ satisfies \eqref{U_rho_th} on the time interval $[0, 2T]$ with $C'=C_\rho.$ Hence, the exponential decay estimate \eqref{stab_estimate_2} holds for $U$; which is, 
 \begin{equation}\label{stab_estimate_2_needed}
 \left\Vert U(t)\right\Vert_{\He} \leq Me^\gamma\left(\left\Vert U_0\right\Vert_{\He}+\beta+\int_{0}^{\tau} e^{\omega s}\vert \alpha(s)\vert\cdot\Vert \Phi(s-\tau)\Vert_{\He} ds\right) e^{-\frac{\omega - \omega'}{2} t}.
\end{equation}
for all $t \in [0, 2T]$. Again, we deduce that \eqref{U_estimate_bound_above} holds for all $t \in [0, 2T]$. Furthermore, for all $s \in [2T - \tau, 2T]$, we can obtain estimates 
\eqref{nuova1} and \eqref{cdt_bound_psi}.
Then, similarly to \eqref{U_T_rho}, we get
\begin{multline}\label{U_2T_rho}
 \|U(2T)\|_{\mathcal{H}}^2 + \int^{2T}_{2T-\tau} \Vert \nabla \psi_2(u(s))\Vert^2 ds + \int_{2T-\tau}^{2T} |\alpha(s)| \cdot \|B^* u_t(s)\|_{\mathcal{H}}^2 \, ds \\
 \leq 2M^2 \rho^2 e^{2\gamma}\left(1+e^{\omega\tau} (b\sigma+\tau)\right)\left(1 + e^{2\omega \tau} (\sigma +\tau)^2\right) e^{-(\omega - \omega')2 T} \leq C_T \rho^2 < \rho^2.
\end{multline}
Moreover,
\begin{equation}\label{U_rho_bis}
	\max_{s\in [2T-\tau, 2T]} \|U(s)\|_{\mathcal{H}}^2< 
	2M^2 \rho^2 e^{2\gamma+\omega\tau} \left(1 + e^{2\omega \tau} (\sigma+\tau)^2 \right) e^{-(\omega - \omega') 2T}\le C_T\rho^2<\rho^2.
\end{equation}

Estimates \eqref{U_2T_rho} and \eqref{U_rho_bis} enable us to repeat the same reasoning used earlier on the interval $[2T, 3T]$. This yields the existence of a unique solution to \eqref{P_abstract} defined on $[0, 3T]$ that satisfies \eqref{formule_U_rho} for every $t \in [0, 3T]$. By iterating this process, we ultimately obtain a unique global solution $U \in C([0, +\infty); \He)$ to \eqref{P_abstract}, which fulfills \eqref{formule_U_rho}. 

Consequently, we have shown that, for sufficiently small initial data, solutions to problem \eqref{P_abstract} exist globally in time and are bounded by a positive constant $C_\rho$ satisfying $L(C_\rho)< \frac {\omega-\omega'}{M(1+e^{\omega\tau})}.$ 
\end{proof}

From Theorem \ref{Stability} and Theorem \ref{th_stability}, we immediately deduce the exponential decay estimate for the energy of solutions to \eqref{p} corresponding to {\em small} initial data.

\begin{theorem}\label{last}
	Assume \eqref{cdt_alpha_2} and \eqref{cdt_alpha}. There exists $\rho>0$ such that, if the initial data $\Phi=(f_0,g_0)\in C([-\tau,0]; \mathcal H)$ satisfy
	$$
		\|u_1\|^2 + \|\Ar u_0\|^2 +\int^{0}_{-\tau}
		\Vert \nabla \psi_2(u_0(s))\Vert^2 ds +\int_{-\tau}^0 |\alpha(s+\tau)| \cdot \|B^* g_0(s)\|^2 \, ds < \rho^2,
	$$
	and
	$$
		\max _{s\in [-\tau,0]} \Vert \Phi(s)\Vert_{\mathcal H}<\rho,
	$$
	then
	problem \eqref{p} has a unique global solution 
	satisfying 
	$$E(t)\le \tilde C e^{-(\omega-\omega') t},\quad \forall \ t\ge 0,$$
where $\tilde C$ is a suitable constant depending on the initial data.
\end{theorem}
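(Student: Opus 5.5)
Our plan is to combine Theorem \ref{Stability} with Theorem \ref{th_stability}, and then convert the resulting decay of $\|U(t)\|_{\He}$ into a decay estimate for the full energy $E(t)$, including its two memory integrals.

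\textbf{Step 1 (global existence and decay of $\|U\|$).} We take $\rho$ as in Theorem \ref{Stability}, shrinking it if needed so that $L(C_\rho)\le \frac{\omega-\omega'}{2M(1+e^{\omega\tau})}$. Theorem \ref{Stability} then gives a unique global solution with $\|U(t)\|_{\He}\le C_\rho$ for all $t\ge0$. Hence the well-posedness assumption \eqref{U_rho_th}--\eqref{serve} holds on $[0,+\infty)$ with $C'=C_\rho$. Theorem \ref{th_stability} therefore yields
\[
\|U(t)\|_{\He}\le K e^{-\frac{\omega-\omega'}{2}t},\qquad t\ge 0,
\]
where $K=Me^\gamma\bigl(\|U_0\|_{\He}+\beta+\int_0^\tau e^{\omega s}|\alpha(s)|\,\|\Phi(s-\tau)\|_{\He}ds\bigr)$ depends only on the initial data. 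Squaring gives $\|u_t(t)\|^2+\|\Ar u(t)\|^2\le K^2e^{-(\omega-\omega')t}$, which is exactly the rate claimed for $E$.

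\textbf{Step 2 (bounding each term of $E(t)$).} We estimate the terms of \eqref{function_E} one at a time.
\begin{itemize}
\item By \eqref{cdt_psi_1} and $h_1(\|\Ar u(t)\|)<\frac12$, we get $|\psi_1(u(t))|\le \frac14\|\Ar u(t)\|^2$.
\item For the delayed source term, (A2) with $\nabla\psi_2(0)=0$ and $L(C_\rho)<1$ gives $\|\nabla\psi_2(u(s))\|\le \|\Ar u(s)\|$ for $s\ge0$. For $s\in[-\tau,0]$ the corresponding piece is part of the initial data.
\item For the delay-feedback term, $\|B^*u_t(s)\|^2\le b\|U(s)\|^2_{\He}$.
\end{itemize}
For $t\ge\tau$ this yields
\[
\frac12\int_{t-\tau}^t\|\nabla\psi_2(u(s))\|^2ds\le \frac{\tau}{2}K^2e^{(\omega-\omega')\tau}e^{-(\omega-\omega')t},
\]
and, using \eqref{cdt_alpha} for the weight,
\[
\frac12\int_{t-\tau}^t|\alpha(s+\tau)|\,\|B^*u_t(s)\|^2ds\le \frac{b\sigma}{2}K^2e^{(\omega-\omega')\tau}e^{-(\omega-\omega')t}.
\]

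\textbf{Step 3 (the interval $[0,\tau)$).} For $t\in[0,\tau)$ the two integrals reach back into $[-\tau,0]$, where they involve the data $f_0,g_0$. On the part $[0,t]$ we use the same bounds as in Step 2. On the part $[t-\tau,0]$ the integrands are controlled by the smallness assumptions, so they are bounded by a constant $\rho^2$-type quantity. On this compact interval we have $e^{-(\omega-\omega')t}\ge e^{-(\omega-\omega')\tau}$, so these bounded contributions can be absorbed by enlarging the constant by the factor $e^{(\omega-\omega')\tau}$.

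Putting Steps 1--3 together gives $E(t)\le \tilde C e^{-(\omega-\omega')t}$, with $\tilde C$ depending on $K$, $\rho$, $\tau$, $b$ and $\sigma$, hence on the initial data.

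The main obstacle is the bookkeeping of the nonlocal integral terms in $E$: they must be shown to inherit the decay rate, including for $t<\tau$, and the bound on $\nabla\psi_2$ must be applied uniformly. This uniform application relies on the global bound $\|U\|\le C_\rho$ together with the smallness of $L(C_\rho)$.
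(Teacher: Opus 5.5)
Your proposal is correct and follows the same route as the paper, which simply asserts that Theorem \ref{last} is an immediate consequence of Theorems \ref{Stability} and \ref{th_stability}: the choice $L(C_\rho)<\frac{\omega-\omega'}{2M(1+e^{\omega\tau})}$ made in the proof of Theorem \ref{Stability} gives the rate $\frac{\omega-\omega'}{2}$ for $\|U(t)\|_{\He}$, hence $\omega-\omega'$ for $E$. Your Steps 2--3 supply the bookkeeping for $\psi_1$ and for the two integral terms of $E$, which the paper leaves implicit; two small points there are that $h_1(\|\Ar u(t)\|)<\frac12$ follows from $\|U(t)\|_{\He}\le C_\rho=2\sqrt{C^*_T}\rho$ together with \eqref{rho_condition}, and that you do not actually need $L(C_\rho)<1$, since $L(C_\rho)$ only enters the constant $\tilde C$.
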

	\section{Viscoelastic models: system \eqref{visco_p}} \label{visco_dam_sec}
We now turn our attention to the second class of systems considered in this paper, namely the viscoelastic model \eqref{visco_p}, in which the damping is provided by a memory term rather than by a linear velocity feedback. To handle the infinite memory, we introduce the auxiliary variable $\eta^t$, as in Dafermos \cite{dafermos_1970}, by
\begin{equation}\label{def_eta}
\eta^t(s) := u(t) - u(t-s), \qquad s,t \in (0,+\infty),
\end{equation}
so that we can rewrite \eqref{visco_p} in the following way
\begin{equation}\label{visco_P_memory_first_eq}
u_{tt}(t)+ (1-\tilde{k})A u(t)
+ \int_{0}^{+\infty} k(s)\, A \eta^t(s)ds+\alpha(t) BB^* u_t(t-\tau)=\nabla \psi_1(u) + \nabla \psi_2(u(t-\tau)), 
\end{equation}
for all $t \in(0,+\infty)$, with
 \begin{equation}\label{visco_p_memory}
\left\{
\begin{array}{ll}
\eta_t^{t}(s)= -\eta^t_{s}(s) + u_t(t), & \quad t,s \in (0,+\infty), \\
u(t-\tau)=f_{0}(t-\tau), \; u_{t}(t-\tau)=g_{0}(t-\tau)& \quad t \in [0,\tau],\\
u(0)=u_{0}, \quad u_{t}(0)=u_{1}, & \\
	\eta^0(s) = u_0 - f_0(-s), &\quad s \in (0,+\infty),
\end{array}
\right.
\end{equation}

Let us reformulate {\eqref{visco_P_memory_first_eq}-\eqref{visco_p_memory}} 
into the abstract form 
\begin{equation}\label{visco_P_abstract}
	\left\{
	\begin{array}{ll}
		U_t(t) = \hat{\A} U(t) +\alpha(t)\hat{\B} U(t - \tau)+ \hat{F}(U(t), U(t - \tau)), &\quad t \in (0, +\infty), \\
		U(t - \tau)=\hat{\Phi}(t-\tau), &\quad t \in [0, \tau],
	\end{array}
	\right.
\end{equation}
where $U(t) = (u(t),v(t),\eta^t(t) )^T$ with $v(t) = u_t(t)$, and $\hat{\Phi}(t-\tau)=(f_0(t-\tau), g_0(t-\tau), \eta^0).$ We introduce the Hilbert space $$\hat{\He}=D(\sqrt{A})\times H \times L_k^{2}\bigl((0,+\infty); D(\sqrt{A})\bigr),$$
where $L_k^{2}\bigl((0,+\infty); D(\sqrt{A})\bigr)$ be the Hilbert space of \(D(\sqrt{A})\)-valued functions on \((0,+\infty)\),
endowed with the scalar product
\[
\langle \varphi_1 , \varphi_2\rangle_{L_k^{2}((0,+\infty);D(\sqrt{A}))}
= \int_{0}^{\infty} k(s)\,
\langle \sqrt{A}\varphi_1 , \sqrt{A}\varphi_2 \rangle_H \, ds.
\]
The Hilbert space $\hat{\He}$ is equipped with the inner product
$$\left\langle \begin{pmatrix} u \\ v \\ w \end{pmatrix},
\begin{pmatrix} \tilde{u} \\ \tilde{v} \\ \tilde{w} \end{pmatrix}
\right\rangle_{\hat{\He}}
= (1-\tilde{k}) \langle \sqrt{A} u, \sqrt{A} \tilde{u} \rangle_H
+ \langle v, \tilde{v} \rangle_H
+ \int_{0}^{\infty} k(s)\,
\langle \sqrt{A} w, \sqrt{A} \tilde{w} \rangle_H\, ds.$$
The linear operators $\hat{\A}$ with its domain are given by
$$\hat{\A} \begin{pmatrix} u \\ v \\w \end{pmatrix} = \begin{pmatrix} v \\ -(1-\tilde{k})Au - \int_{0}^{\infty} k(s) A w(s) ds \\ - w_s+ v \end{pmatrix}, $$ 
\begin{equation}\label{visco_domain_A}
D(\hat{\A})
= \left\{ 
\begin{array}{l}
(u,v,w) \in D(\sqrt{A}) \times D(\sqrt{A})
\times L_k^{2}\bigl((0,+\infty); D(\sqrt{A})\bigr) : \\
(1-\tilde{k})u + \int_{0}^{\infty} k(s) w(s) ds \in D(A),
\ w_s \in L_k^{2}\bigl((0,+\infty); D(\sqrt{A})\bigr)
\end{array} \right\}. 
\end{equation}
The operator $\hat{\B}$ and the function $\hat{F}$ are defined as follows
$$ \hat{\B} \begin{pmatrix} u \\ v \\ w \end{pmatrix} = \begin{pmatrix} 0 \\ -BB^*v \\ 0 \end{pmatrix} \quad
\text{and} 
\quad \hat{F}(U(t), U(t - \tau)) = \begin{pmatrix}
0 \\
\nabla \psi_1(u(t)) + \nabla \psi_2(u(t - \tau))\\
0
\end{pmatrix}.$$

It is well-known that
 the operator $\hat{\A}$ in \eqref{visco_P_abstract}, corresponding to the linear nondelayed part of the system, generates an exponentially stable semigroup $\{\hat{S}(t)\}_{ t\geq 0}$ on $\hat{\He} $, see \cite{Giorgi}.
 Then, for positive constants $\hat{M},\hat{\omega},$ it holds
\begin{equation}\label{visco_decay_semigroup}
 \Vert \hat{S}(t)\Vert \le \hat{M} e^{-\hat{\omega}t}.
 \end{equation}
Moreover, the previous assumptions (A2)-(A3) on $\psi_i,$ $i=1,2,$ imply that $\hat{F}$ satisfies (F1) and (F2) above.

The reformulation \eqref{visco_P_abstract} allows us to apply a semigroup strategy analogous to the one developed in the previous section. We first prove a local existence result, then extend it and prove exponential decay under suitable smallness assumptions on the initial data.
\begin{lemma}\label{visco_lem_abst_exis}
 Let us consider the system \eqref{visco_P_abstract} with initial data $\hat{\Phi} \in C([-\tau,0]; \hat{\He})$. Then, there exists a unique continuous local solution $U$ defined on
a time interval $[0, T)$, with $T>0,$ given by 
\begin{equation}\label{visco_formule_U_rho}
U(t) = \hat{S}(t) U_0 + \int_0^t \hat{S}(t -s) \bigl[\alpha(s)\hat{\B} U(s - \tau)+ \hat{F}(U(s), U(s - \tau))\bigr] \, ds,
\end{equation}
for all $t \in [0, T)$.
\end{lemma}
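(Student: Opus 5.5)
The proof will follow the argument used for Lemma \ref{lem_abst_exis}, namely the method of steps combined with a standard fixed point argument for the Duhamel formulation \eqref{visco_formule_U_rho}. The linear ingredient is already available: $\hat{\A}$ generates the $C_0$-semigroup $\{\hat S(t)\}_{t\ge 0}$ on $\hat{\He}$, with the bound \eqref{visco_decay_semigroup}. Since $\hat{\B}$ only acts on the second component through $BB^*$, it is a bounded operator on $\hat{\He}$ with $\|\hat{\B}\|\le \|BB^*\|=b$. As noted after \eqref{visco_decay_semigroup}, the nonlinearity $\hat F$ satisfies (F1)--(F2).

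On the first step $t\in[0,\tau]$ the delayed argument $U(s-\tau)=\hat\Phi(s-\tau)$ is a known continuous function. Hence $g(s):=\alpha(s)\hat{\B}\hat\Phi(s-\tau)$ belongs to $L^1(0,\tau;\hat{\He})$, because $\alpha\in L^1_{loc}$ and $\hat\Phi$ is bounded. The map $s\mapsto \hat F(U(s),\hat\Phi(s-\tau))$ is then locally Lipschitz in $U$, uniformly in $s$, by \eqref{F_local} with $R$ a bound for both arguments.

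I will take $R:=2\hat M\big(\|U_0\|+\|g\|_{L^1(0,\tau)}+1\big)+\max_{[-\tau,0]}\|\hat\Phi\|$ and work on the closed ball $K$ of radius $R$ in $C([0,\delta];\hat{\He})$. On $K$ I define
$$(\Gamma U)(t)=\hat S(t)U_0+\int_0^t\hat S(t-s)\big[g(s)+\hat F(U(s),\hat\Phi(s-\tau))\big]ds .$$
Continuity of $\Gamma U$ in $t$ is the usual property of mild solutions with an $L^1$ forcing. Using (F1)--(F2) and $\|\hat S(t)\|\le \hat M$, for $\delta\le\tau$ small one gets $\|\Gamma U(t)\|\le \hat M(\|U_0\|+\|g\|_{L^1})+\hat M\delta L(R)\,2R\le R$ and $\|\Gamma U-\Gamma V\|_\infty\le \hat M\delta L(R)\|U-V\|_\infty\le\frac12\|U-V\|_\infty$. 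Banach's fixed point theorem then gives a unique continuous mild solution on $[0,\delta]$.

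The argument is repeated from $t=\delta$, so the solution extends as long as it stays bounded. This produces a maximal interval $[0,T)$, possibly with $T>\tau$: past $\tau$ the delayed term is the already constructed solution, and the same contraction argument applies on each $[k\tau,(k+1)\tau]$. Uniqueness on the overlap follows from the Gronwall inequality applied to the difference of two Duhamel representations.

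The only point needing care is the memory component. The state includes $\eta^t$, whose initial history $\eta^0(s)=u_0-f_0(-s)$ must lie in $L^2_k((0,\infty);D(\sqrt A))$. This membership is exactly what is encoded in the hypothesis $\hat\Phi\in C([-\tau,0];\hat{\He})$. One also has to check that the delay acts through $v(s-\tau)=u_t(s-\tau)$ only, so no delayed $\eta$ is needed, and $\hat\Phi$ may be taken with a constant third component $\eta^0$. With these points in place, the proof is formally identical to Lemma \ref{lem_abst_exis} via \cite{pazy}, and I expect no real obstacle beyond verifying this functional setting.
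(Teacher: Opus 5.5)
Your proposal is correct and follows the paper's route. On $[0,\tau]$ the delayed arguments are the known datum $\hat\Phi(s-\tau)$, so the problem is a nondelayed semilinear inhomogeneous Cauchy problem, and your contraction on a ball of $C([0,\delta];\hat{\He})$ just writes out the classical fixed-point theory the paper invokes via Lemma \ref{lem_abst_exis}; it also correctly handles the forcing $\alpha(s)\hat{\B}\hat\Phi(s-\tau)$, which is only $L^1$ because $\alpha\in L^1_{loc}$. One small imprecision: the delay enters through both $u(s-\tau)$ (in $\nabla\psi_2$) and $v(s-\tau)$, not through $v(s-\tau)$ alone, but nothing changes, since your map $\Gamma$ already uses the full $\hat\Phi(s-\tau)$ and no delayed memory component appears.
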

\begin{proof}
The system \eqref{visco_P_abstract} can be rewritten as a nondelayed inhomogeneous Cauchy problem on the interval $[0,\tau]$. Applying the classical theory of nonlinear semigroups (as in the proof of Lemma \ref{lem_abst_exis}) yields the existence of a unique local solution on $[0,\delta)$, with $\delta \leq \tau$, given by Duhamel's formula \eqref{visco_formule_U_rho}.
\end{proof}

\subsection{Exponential decay and well-posedness of model \eqref{visco_p}}
In the viscoelastic setting, we require the following assumption on the linear delay feedback, which is the analogue of \eqref{cdt_alpha_2}:

For suitable constants $\gamma\geq 0$ and $\omega^\prime\in [0,\hat{\omega})$, it holds
\begin{equation}\label{visco_cdt_alpha_2}
	\hat{M} b e^{\hat{\omega} \tau}\int_{0}^{t}\vert \alpha(s+\tau)\vert ds\leq \gamma+\omega^\prime t,\quad \forall t>0, 
\end{equation}
where $b=\Vert\hat{\B}\Vert_{\hat{\mathcal{H}}}=\Vert BB^*\Vert$ and $\hat{M}, \hat{\omega}$ are the parameters in \eqref{visco_decay_semigroup}.

The following theorem provides the exponential decay estimate for the viscoelastic system. Its proof follows the same arguments as Theorem \ref{th_stability}, adapted to the augmented state space $\hat{\He}.$
\begin{theorem}\label{visco_th_stability}
	Assume \eqref{visco_cdt_alpha_2}.
	Let $U$ be a solution to \eqref{visco_P_abstract}, defined on a time interval $[0, T'),$ 
	satisfying
	\begin{equation}\label{visco_U_rho_th}
		\Vert U(t)\Vert \leq C' \quad \forall t\in [-\tau, T'),
	\end{equation}
	for some $C'>0$ such that 
\begin{equation}\label{serve2}
L(C')<\frac {\hat{\omega}-\omega'}{\hat{M}(1+e^{\hat{\omega}\tau})}.
\end{equation}
		Then, $U$ decays exponentially: 
	\begin{equation}\label{visco_stab_estimate_2}
		\left\Vert U(t)\right\Vert_{\hat{\He}} \leq \hat{M}e^{\gamma}\left(\left\Vert U_0\right\Vert_{\hat{\He}}+\hat{\beta}+\int_{0}^{\tau} e^{\hat{\omega} s}\vert \alpha(s)\vert\cdot\Vert \hat{\Phi}(s-\tau)\Vert_{\hat{\He}} ds\right) e^{-(\hat{\omega}-\omega^\prime-\hat{M}L(C')(1+e^{\hat{\omega} \tau}))t},
	\end{equation}
	for all $t\in [0,T'),$ where
	\begin{equation}\label{visco_beta_eq}
	 \hat{\beta} = L(C')\int_0^\tau e^{\hat{\omega} s} \Vert \hat{\Phi}(s-\tau)\Vert_{\hat{\mathcal{H}}}ds. 
	\end{equation}
	\end{theorem}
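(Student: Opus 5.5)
The plan is to repeat the Duhamel–Gronwall argument of Theorem \ref{th_stability} verbatim in the augmented space $\hat{\He}$, with $\hat{S}$, $\hat{M}$, $\hat{\omega}$, $\hat{\B}$, $\hat{F}$ in place of $S$, $M$, $\omega$, $\B$, $F$. The only structural facts needed are the following.

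\begin{itemize}
\item The semigroup bound \eqref{visco_decay_semigroup}.
\item The operator-norm identity $\Vert\hat{\B}\Vert_{\mathcal L(\hat{\He})}=b$. This holds because $\hat{\B}$ acts only on the $v$-component, and that component carries the plain $H$-norm in the $\hat{\He}$ inner product.
\item The local Lipschitz property (F2) for $\hat{F}$ together with $\hat{F}(0,0)=0$.
\end{itemize}

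For (F2), since $\hat{F}$ depends only on the first components, we need $\Vert\nabla\psi_i(u_1)-\nabla\psi_i(u_2)\Vert\le L(R)\Vert\sqrt{A}(u_1-u_2)\Vert$. Here $\Vert\sqrt{A}u\Vert\le(1-\tilde k)^{-1/2}\Vert U\Vert_{\hat{\He}}$, so a harmless constant $(1-\tilde k)^{-1/2}$ appears. I would absorb it into the definition of $L(\cdot)$ for $\hat{F}$, exactly as the paper implicitly asserts that (F1)-(F2) hold for $\hat F$.

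Concretely, starting from \eqref{visco_formule_U_rho}, I take norms and obtain
\[
\Vert U(t)\Vert_{\hat{\He}}\le \hat M e^{-\hat\omega t}\Vert U_0\Vert+\hat M e^{-\hat\omega t}\int_0^t e^{\hat\omega s}\Big(b|\alpha(s)|\,\Vert U(s-\tau)\Vert+L(C')\big(\Vert U(s)\Vert+\Vert U(s-\tau)\Vert\big)\Big)ds,
\]
using \eqref{visco_U_rho_th} to place all arguments in the ball of radius $C'$. Next I split each integral over $[0,\tau]$ and $[\tau,t]$. On $[0,\tau]$, $U(s-\tau)=\hat\Phi(s-\tau)$; these pieces produce the constants $\hat\beta$ from \eqref{visco_beta_eq} and $b\int_0^\tau e^{\hat\omega s}|\alpha(s)|\Vert\hat\Phi(s-\tau)\Vert ds$. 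On $[\tau,t]$, the substitution $\sigma=s-\tau$ followed by extending the upper limit to $t$ gives factors $e^{\hat\omega\tau}|\alpha(\sigma+\tau)|$ and $L(C')e^{\hat\omega\tau}$.

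With $\tilde u(t)=e^{\hat\omega t}\Vert U(t)\Vert_{\hat{\He}}$, this yields
\[
\tilde u(t)\le \hat M\lambda+\hat M\int_0^t\phi(s)\tilde u(s)ds,\qquad \phi(s)=be^{\hat\omega\tau}|\alpha(s+\tau)|+L(C')(1+e^{\hat\omega\tau}).
\]
Gronwall's inequality then gives $\tilde u(t)\le \hat M\lambda\exp(\hat M\int_0^t\phi)$. Applying \eqref{visco_cdt_alpha_2} bounds $\hat M be^{\hat\omega\tau}\int_0^t|\alpha(s+\tau)|ds$ by $\gamma+\omega' t$, which produces \eqref{visco_stab_estimate_2}. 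Condition \eqref{serve2} makes the resulting exponent negative.

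The only point that needs genuine care, and so the expected obstacle, is checking that the abstract ingredients really transfer to the memory setting. There are two things to confirm. First, $\Vert\hat{\B}\Vert=b$ with the weighted norm. Second, the Lipschitz constant for $\hat{F}$ must be expressible through $\Vert U\Vert_{\hat{\He}}$ while keeping $L(R)\to0$ as $R\to0$, despite the weight $(1-\tilde k)$ in front of $\Vert\sqrt{A}u\Vert^2$. Once these are in place, the remaining computation is identical to that of Theorem \ref{th_stability}. As in that proof, the factor $b$ multiplying the $\alpha$-integral of the history is kept or absorbed as in the statement.
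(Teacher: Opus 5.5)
Your proposal is correct and follows the paper's route. The paper only says the proof repeats the Duhamel--Gronwall argument of Theorem \ref{th_stability} in $\hat{\He}$. Your explicit checks fill in what the paper leaves implicit: that $\Vert\hat{\B}\Vert_{\mathcal L(\hat{\He})}=b$, and that for $\hat F$ one must work with the rescaled constant $(1-\tilde k)^{-1/2}L\bigl((1-\tilde k)^{-1/2}R\bigr)$, which still tends to $0$ as $R\to 0$. One caveat: the factor $b$ in front of $\int_0^\tau e^{\hat\omega s}|\alpha(s)|\,\Vert\hat\Phi(s-\tau)\Vert_{\hat{\He}}\,ds$ cannot simply be ``absorbed'' unless $b\le 1$. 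What your argument (like the paper's own computation in Theorem \ref{th_stability}) actually proves is \eqref{visco_stab_estimate_2} with that factor $b$ kept.
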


\begin{remark}\label{visco_remark_1}
	Also here, we refer to the assumption \eqref{visco_U_rho_th} in Theorem \ref{visco_th_stability}, with $C'$ satisfying \eqref{serve2}, as the well-posedness assumption.
\end{remark}

 Therefore, establishing the exponential stability of the solutions to system \eqref{visco_p} requires verifying that the well-posedness condition \eqref{visco_U_rho_th} holds.
 
 For this purpose, we introduce the energy functional related to system \eqref{visco_p} as follows
\begin{eqnarray}\label{visco_function_E} \notag
	\hat{E}(t)&=& \frac{1}{2}\left\|u_{t}(t)\right\|^{2}+\frac{ 1 - \tilde{k}}{2}\left\|\sqrt{A}u(t)\right\|^{2}-\psi_1(u(t))+\frac{1}{2}\int^{t}_{t-\tau}
	\Vert \nabla \psi_2(u(s))\Vert^2 ds \\
	& &+ \frac{1}{2}\int_{t-\tau}^{t}\vert \alpha(s+\tau)\vert \Vert B^*u_t(s) \Vert^2ds + \frac{1}{2} \int_{0}^{+\infty} k(s) \|\sqrt{A} \eta^t(s)\|^2\, ds , 
\end{eqnarray}
To prove our global well-posedness and stability result, we need some preliminary results.
\begin{pro}\label{visco_pro_1}
Let $u$ be a solution to \eqref{visco_p} defined on some interval $[0, T').$ If
	\begin{equation}\label{visco_aggiunta}
\Vert \nabla\psi_2(u(t))\Vert \le (1-\tilde{k}) \Vert\sqrt{A}u(t)\Vert,\ \forall t\in [0, T'),
	\end{equation}and the functional energy verifies\begin{equation}\label{visco_cdt_energy}
		\hat{E}(t)\geq \frac{1}{4}
			\Vert u_t(t)\Vert^2 + \frac{1-\tilde{k}}{4} \left\Vert\sqrt{A}u(t)\right\Vert^2.
	\end{equation}
	Then,
	\begin{equation}\label{visco_estiamte_energy}
	\hat{E}(t)\leq C(t)\hat{E}(0), \quad \forall t\geq 0 ,
	\end{equation}
where $C(t)$
is defined in \eqref{constant_t}.
\end{pro}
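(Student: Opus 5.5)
The plan mirrors the proof of Proposition \ref{pro_1}. The only genuinely new ingredients are the memory term, which must be shown to dissipate, and the weight $1-\tilde{k}$ in both the energy and the hypotheses.

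\textbf{Energy identity.} We differentiate $\hat{E}$ along a (sufficiently regular) solution and use \eqref{visco_P_memory_first_eq} tested against $u_t$. The term $\langle (1-\tilde{k})Au, u_t\rangle$ cancels against the derivative of $\frac{1-\tilde{k}}{2}\|\sqrt{A}u\|^2$. For the memory part we use $\eta^t_t=-\eta^t_s+u_t$, which gives
$$\frac{d}{dt}\,\frac12\int_0^\infty k(s)\|\sqrt{A}\eta^t(s)\|^2ds=-\int_0^\infty k(s)\langle \sqrt{A}\eta^t,\sqrt{A}\eta^t_s\rangle ds+\int_0^\infty k(s)\langle \sqrt{A}\eta^t(s),\sqrt{A}u_t\rangle ds.$$
The second term cancels the contribution $-\int_0^\infty k(s)\langle A\eta^t(s),u_t\rangle ds$ from the equation. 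We integrate the first term by parts in $s$, using $\eta^t(0)=0$ and boundedness of $k$ together with the $L^2_k$-integrability at infinity. It equals $\frac12\int_0^\infty k'(s)\|\sqrt{A}\eta^t(s)\|^2ds\le0$ by assumption 4 on $k$.

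The remaining terms are exactly as in Proposition \ref{pro_1}: the $\psi_1$ terms cancel, and the derivatives of the two integrals over $[t-\tau,t]$ produce $\pm\frac12\|\nabla\psi_2\|^2$ and $\pm\frac12|\alpha|\,\|B^*u_t\|^2$. This yields
$$\hat{E}'(t)\le \tfrac12\int_0^\infty k'\|\sqrt{A}\eta^t\|^2-\alpha(t)\langle B^*u_t(t-\tau),B^*u_t(t)\rangle+\langle\nabla\psi_2(u(t-\tau)),u_t\rangle+\tfrac12\|\nabla\psi_2(u(t))\|^2-\tfrac12\|\nabla\psi_2(u(t-\tau))\|^2+\dots$$

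\textbf{Young and Cauchy--Schwarz.} We estimate
$$\langle\nabla\psi_2(u(t-\tau)),u_t\rangle\le\tfrac12\|\nabla\psi_2(u(t-\tau))\|^2+\tfrac12\|u_t\|^2,$$
and the first piece cancels the delayed $\psi_2$ term. Young's inequality on the $\alpha$ term absorbs $\frac12|\alpha(t)|\|B^*u_t(t-\tau)\|^2$. Dropping the nonpositive memory contribution, we obtain
$$\hat{E}'\le\tfrac12\|u_t\|^2+\tfrac12\|\nabla\psi_2(u(t))\|^2+\tfrac b2(|\alpha(t)|+|\alpha(t+\tau)|)\|u_t\|^2.$$

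\textbf{Closing the estimate.} By \eqref{visco_aggiunta},
$$\|\nabla\psi_2(u)\|^2\le(1-\tilde{k})^2\|\sqrt{A}u\|^2\le(1-\tilde{k})\|\sqrt{A}u\|^2,$$
since $0<1-\tilde{k}\le1$; this is why the hypothesis carries the factor $(1-\tilde{k})$. Hence
$$\hat{E}'\le\left[1+b(|\alpha(t)|+|\alpha(t+\tau)|)\right]\cdot\tfrac12\left(\|u_t\|^2+(1-\tilde{k})\|\sqrt{A}u\|^2\right).$$
Here a harmless term $\tfrac b2(|\alpha(t)|+|\alpha(t+\tau)|)(1-\tilde{k})\|\sqrt{A}u\|^2$ has been added, as in Proposition \ref{pro_1}. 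The lower bound \eqref{visco_cdt_energy} gives $\tfrac12(\|u_t\|^2+(1-\tilde{k})\|\sqrt{A}u\|^2)\le2\hat{E}$, so $\hat{E}'\le2[1+b(|\alpha(t)|+|\alpha(t+\tau)|)]\hat{E}$. Integration and Gr\"onwall then give \eqref{visco_estiamte_energy} with $C(t)$ as in \eqref{constant_t}.

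\textbf{Main obstacle.} The delicate point is the rigorous treatment of the memory term. The integration by parts requires the boundary terms at $s=0$ and $s=\infty$ to vanish, which is justified for data in $D(\hat{\A})$, where $\eta^t(0)=0$ and $k\|\sqrt{A}\eta^t\|^2\to0$ along a sequence. The inequality for mild solutions then follows by density and the continuous dependence provided by Lemma \ref{visco_lem_abst_exis}.
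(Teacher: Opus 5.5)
Your proposal is correct and follows essentially the same route as the paper: differentiate $\hat{E}$, cancel the $(1-\tilde{k})A$, $\psi_1$ and memory-coupling terms, integrate the memory term by parts in $s$ so that $k'\le -\zeta k$ makes it nonpositive, absorb the two delayed terms by Young's inequality, use \eqref{visco_aggiunta} together with $(1-\tilde{k})^2\le 1-\tilde{k}$, and conclude via the lower bound \eqref{visco_cdt_energy} and Gr\"onwall. Adding the nonnegative term with factor $b$ (the paper's display has $b^2$ there, apparently a typo) is what yields exactly the constant $C(t)$ of \eqref{constant_t}, and your remark on justifying the integration by parts through regular data is an addition the paper does not make.
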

\begin{proof}
	By multiplying the equation \eqref{visco_P_memory_first_eq} by $u_t$, we obtain
	\begin{eqnarray*}
		\langle u_{tt}(t), u_t(t) \rangle & =& \langle -(1-\tilde{k})A u(t)
		- \int_{0}^{+\infty} k(s)\, A \eta^t(s)ds-\alpha(t) BB^* u_t(t-\tau)+\nabla \psi_1(u(t)), u_t(t)\rangle \\
		& & + \langle \nabla \psi_2(u(t-\tau)), u_t(t) \rangle .
	\end{eqnarray*}
	Otherwise, we have
	\begin{eqnarray*}
		\hat{E}^\prime(t)&=& \langle u_{tt}(t), u_t(t) \rangle+(1 - \tilde{k})\langle\sqrt{A}u(t), \sqrt{A}u_t(t) \rangle-\langle \nabla\psi_1(u), u_t(t) \rangle+\frac{1}{2}
		\Vert \nabla \psi_2(u(t))\Vert^2\\
		& &-\frac{1}{2}
		\Vert \nabla \psi_2(u(t-\tau))\Vert^2 + \frac{1}{2}\vert \alpha(t+\tau)\vert \Vert B^*u_t(t) \Vert^2 -\frac{1}{2}\vert \alpha(t)\vert \Vert B^*u_t(t-\tau) \Vert^2 \\
		& & + \int_{0}^{+\infty} k(s)
		\left\langle \sqrt{A} \eta^t(s), \sqrt{A}\, \eta^t_{t}(s) \right\rangle ds.
	\end{eqnarray*}
	By combining the previous estimates, we obtain
	\begin{eqnarray*}
		\hat{E}^\prime(t)&=& -\langle \int_{0}^{+\infty} k(s)\, A \eta^t(s)ds, u_t(t)\rangle-\alpha(t) \langle BB^* u_t(t-\tau), u_t(t) \rangle+\langle\nabla \psi_2(u(t-\tau)), u_t(t) \rangle \\
		& &+\frac{1}{2}
		\Vert \nabla \psi_2(u(t))\Vert^2-\frac{1}{2}
		\Vert \nabla \psi_2(u(t-\tau))\Vert^2 + \frac{1}{2}\vert \alpha(t+\tau)\vert \Vert B^*u_t(t) \Vert^2 \\
		& &-\frac{1}{2}\vert \alpha(t)\vert \Vert B^*u_t(t-\tau) \Vert^2 + \int_{0}^{+\infty} k(s)
		\left\langle \sqrt{A} \eta^t(s), \sqrt{A}\, \eta^t_{t}(s) \right\rangle ds, 
	\end{eqnarray*}
	by the first equation in system \eqref{visco_p_memory}, and we find
	\begin{eqnarray*}
	\hat{E}^\prime(t)&=& -\alpha(t) \langle BB^* u_t(t-\tau), u_t(t) \rangle+\langle\nabla \psi_2(u(t-\tau)), u_t(t) \rangle +\frac{1}{2}
		\Vert \nabla \psi_2(u(t))\Vert^2\\
		& &-\frac{1}{2}
		\Vert \nabla \psi_2(u(t-\tau))\Vert^2 + \frac{1}{2}\vert \alpha(t+\tau)\vert \Vert B^*u_t(t) \Vert^2 -\frac{1}{2}\vert \alpha(t)\vert \Vert B^*u_t(t-\tau) \Vert^2 \\
		& & - \int_{0}^{+\infty} k(s)
		\left\langle \sqrt{A} \eta^t(s), \sqrt{A}\, \eta^t_{s}(s) \right\rangle ds.
	\end{eqnarray*}
	Apart from that, by employing that $\eta^t(0)=0$ and $k(s)\|\sqrt{A}\eta^t(s)\|^2\to 0$ as $s\to+\infty$, an integration by parts gives
	\[
	\int_0^{+\infty} k(s)\langle \eta^t_{s}(s),A\eta_t(s)\rangle ds
	= -\frac12\int_0^{+\infty} k^\prime(s)\|\sqrt{A}\eta^t(s)\|^2 ds.
	\]
	Therefore,
	\[
	-\frac12\int_0^{+\infty} k^\prime(s)\|\sqrt{A}\eta^t(s)\|^2 ds
	\ge \frac{\zeta}{2}\int_0^{+\infty} k(s)\|\sqrt{A}\eta^t(s)\|^2 ds \ge 0.
	\] 
	With the use of this last result and Cauchy-Schwartz inequality, we arrive at
	\begin{eqnarray*}
		\hat{E}^\prime(t)&\leq& \frac{1}{2}
		\Vert u_t(t)\Vert^2 + \frac{1}{2}\Vert \nabla \psi_2(u(t))\Vert^2+\frac{1}{2} \left(\vert\alpha(t)\vert+\vert\alpha(t+\tau)\vert\right) \Vert B^*u_t(t) \Vert^2 , 
	\end{eqnarray*}
	By using the assumption \eqref{visco_aggiunta}, we get
	\begin{equation*}
		\hat{E}^\prime(t)\leq \frac{1}{2}
		\Vert u_t(t)\Vert^2 + \frac{ 1 - \tilde{k}}{2}\left\Vert\sqrt{A}u(t)\right\Vert^2+\frac{1}{2} \left(\vert\alpha(t)\vert+\vert\alpha(t+\tau)\vert\right)b \Vert u_t(t) \Vert^2 , 
	\end{equation*}
	thus,
	\begin{eqnarray*}
	\hat{E}^\prime(t)&\leq& \frac{1}{2}
		\Vert u_t(t)\Vert^2 + \frac{ 1 - \tilde{k}}{2} \left\Vert\sqrt{A}u(t)\right\Vert^2+\frac{1}{2} \left(\vert\alpha(t)\vert+\vert\alpha(t+\tau)\vert\right)b \Vert u_t(t) \Vert^2 \\
		& & + \frac{ 1 - \tilde{k}}{2} \left(\vert\alpha(t)\vert+\vert\alpha(t+\tau)\vert\right)b^2\left\Vert\sqrt{A}u(t)\right\Vert^2.
	\end{eqnarray*}
	Since $ \hat{E}$ satisfies \eqref{visco_cdt_energy}, we arrive at
	\begin{equation*}
		\hat{E}^\prime(t)\leq 2 \left[\left(\vert\alpha(t)\vert+\vert\alpha(t+\tau)\vert\right)b +1\right] \hat{E}(t) , 
	\end{equation*}
	By simple integration, we get
	\begin{equation*}
		\hat{E}(t)\leq \hat{E}(0)+ 2 \int_{0}^{t} \left[\left(\vert\alpha(s)\vert+\vert\alpha(s+\tau)\vert\right)b +1\right] \hat{E}(s)ds , 
	\end{equation*} and
	by using Gr\"{o}nwall's inequality, \eqref{visco_estiamte_energy} is obtained with $C(t)$ given in \eqref{constant_t}.
\end{proof}

\begin{lemma} \label{visco_exis_lemma}
	Let \( U(\cdot) \) be a non-zero solution to system \eqref{visco_P_abstract} defined on the interval \( [0, \delta) \) and let \( T>\delta \). Then,
	\begin{enumerate}
		\item If \( h_1\left(\left\|\sqrt{A} u_0 \right\|\right) < \frac{1-\tilde{k}}{2} \), then \( \hat{E}(0) > 0 \).
		
		\item If 
			\( h_1\left(\frac{2\sqrt{2} }{\sqrt{1-\tilde{k}}} C^{1/2}(T)\hat{E}^{1/2}(0)\right) <\frac{1-\tilde{k}}{2} \), and \( L\left(\frac{2\sqrt{2} }{\sqrt{1-\tilde{k}}} C^{1/2}(T)\hat{E}^{1/2}(0)\right) <\frac{1-\tilde{k}}{2} \),
		where \( C(\cdot) \) is defined in \eqref{constant_t}, then the following estimates hold for all $t \in [0, \delta)$:
	\end{enumerate}
		\begin{equation}\label{visco_estimate_psi2}
			\Vert \nabla\psi_2(u(t))\Vert \le \frac{1-\tilde{k}}{2} \left\|\Ar u(t)\right\|, 
		\end{equation}
	\begin{eqnarray}\notag\label{visco_estimate_energy_existance}
		\hat{E}(t)& >& \frac{1}{4} \|u_t(t)\|^2 + \frac{1-\tilde{k}}{4} \left\|\sqrt{A} u(t)\right\|^2 + \frac{1}{4}\int_{t-\tau}^{t}\vert \alpha(s+\tau)\vert \Vert B^*u_t(s) \Vert^2ds \\
		& & +\frac{1}{4}\int^{t}_{t-\tau}
		\Vert \nabla \psi_2(u(s))\Vert^2 ds +\frac{1}{4}\int_{0}^{+\infty} k(s)\|\sqrt{A}\eta^t(s)\|^{2} ds,
	\end{eqnarray}
	for all $t \in [0, \delta)$. Particularly, we arrive at
	\begin{equation}\label{visco_U_cdt_E}
		\hat{E}(t) > \frac{1}{4} \|U(t)\|_{\hat{\He}}^2, 
	\end{equation}
	for all $t \in [0, \delta)$.
\end{lemma}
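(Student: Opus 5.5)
The proof will mirror that of Lemma \ref{exis_lemma}, with the constant $\frac12$ replaced by $\frac{1-\tilde k}{2}$ wherever it is used to absorb $\psi_1$ and $\nabla\psi_2$ into the elastic term. For the first claim, we start from the bound \eqref{cdt_psi_1}, $|\psi_1(u_0)|\le \frac12 h_1(\|\Ar u_0\|)\|\Ar u_0\|^2$. Together with $h_1(\|\Ar u_0\|)<\frac{1-\tilde k}{2}$, this gives
$$\frac{1-\tilde k}{2}\|\Ar u_0\|^2-\psi_1(u_0)> \frac{1-\tilde k}{4}\|\Ar u_0\|^2$$
whenever $u_0\neq 0$. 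Every other term of $\hat E(0)$ is nonnegative and enters with coefficient $\frac12\ge\frac14$. This yields \eqref{visco_estimate_energy_existance} at $t=0$, and hence $\hat E(0)>0$ because $U$ is nonzero. In particular $\|\Ar u_0\|<\frac{2}{\sqrt{1-\tilde k}}\hat E^{1/2}(0)\le \frac{2\sqrt2}{\sqrt{1-\tilde k}}C^{1/2}(T)\hat E^{1/2}(0)$. Combining (A2) with $\nabla\psi_2(0)=0$ and the monotonicity of $L$, we get $\|\nabla\psi_2(u_0)\|\le L(\|\Ar u_0\|)\|\Ar u_0\|\le\frac{1-\tilde k}{2}\|\Ar u_0\|$, which is \eqref{visco_estimate_psi2} at $t=0$.

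For the second claim, I will define $r$ as the supremum of those $s\in[0,\delta)$ such that both \eqref{visco_estimate_psi2} and \eqref{visco_estimate_energy_existance} hold on $[0,s]$, and argue by contradiction assuming $r<\delta$. As before, there are two cases. In Case i) the energy inequality becomes an equality at $r$. Since \eqref{visco_estimate_psi2} implies \eqref{visco_aggiunta} and the strict energy bound implies \eqref{visco_cdt_energy} on $[0,r)$, Proposition \ref{visco_pro_1} applies, and continuity extends its conclusion to $t=r$. The equality at $r$ still gives $\frac{1-\tilde k}{4}\|\Ar u(r)\|^2\le\hat E(r)$, so
$$\|\Ar u(r)\|\le \frac{2}{\sqrt{1-\tilde k}}\hat E^{1/2}(r)\le \frac{2\sqrt2}{\sqrt{1-\tilde k}}C^{1/2}(T)\hat E^{1/2}(0).$$
Monotonicity of $h_1$ then gives $h_1(\|\Ar u(r)\|)<\frac{1-\tilde k}{2}$. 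Repeating the computation of the first claim at time $r$ produces a strict inequality, which contradicts the equality. In Case ii) the inequality \eqref{visco_estimate_psi2} fails on some $(r,r+\epsilon)$. By continuity of the energy inequality and of $\nabla\psi_2(u(\cdot))$, there is $\epsilon'>0$ such that \eqref{visco_cdt_energy} and the weaker bound \eqref{visco_aggiunta} hold on $[0,r+\epsilon')$. Proposition \ref{visco_pro_1} then bounds $\|\Ar u(t)\|$ by the same quantity on that interval. Applying (A2) with the hypothesis on $L$ gives $\|\nabla\psi_2(u(t))\|<\frac{1-\tilde k}{2}\|\Ar u(t)\|$ on $(r,r+\epsilon')$, which contradicts the assumed failure. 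Hence $r=\delta$.

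The final claim \eqref{visco_U_cdt_E} follows by dropping the nonnegative delay integrals in \eqref{visco_estimate_energy_existance}. By the definition of the $\hat\He$-norm, the remaining right-hand side is exactly $\frac14\|U(t)\|^2_{\hat\He}$.

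The step that needs the most care is Case ii) in the second claim. The continuity argument must produce a small interval on which both hypotheses of Proposition \ref{visco_pro_1} hold. This works because the proposition only needs the weaker bound with constant $1-\tilde k$, which leaves room above the $\frac{1-\tilde k}{2}$ threshold. We also have to check that the memory term does not spoil the energy bookkeeping. It does not: it enters $\hat E$ and the $\hat\He$-norm with the same weight $k(s)$, and Proposition \ref{visco_pro_1} already shows that its contribution to $\hat E'$ is nonpositive.
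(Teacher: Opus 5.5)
Your proposal is correct and follows the paper's proof essentially step for step. You use the same absorption of $\psi_1$ via \eqref{cdt_psi_1} at $t=0$ to get the strict lower bound and the bound on $\nabla\psi_2(u_0)$, the same supremum $r$, and the same two-case contradiction argument built on Proposition \ref{visco_pro_1}, the monotonicity of $h_1$ and $L$, and continuity. Your version is slightly tidier in three places: in Case ii) you use the threshold $(1-\tilde{k})\|\Ar u\|$, which \eqref{visco_aggiunta} actually requires (the paper writes $\|\Ar u(s)\|$ there); you make explicit the continuity extension of the energy bound to $t=r$; and you check that the remaining terms equal exactly $\frac14\|U(t)\|^2_{\hat{\He}}$.
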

\begin{proof}
	
	Combining \eqref{cdt_psi_1} and the assumption $h_1(\|\sqrt{A} u_0\|) < \frac{1 -\tilde{k}}{2}$, we deduce
	\begin{eqnarray*}
	\hat{E}(0)&=& \frac{1}{2} \|u_1\|^2 + \frac{1-\tilde{k}}{2} \|\sqrt{A} u_0\|^2 - \psi_1(u_0) +\frac{1}{2}\int^{0}_{-\tau}
		\Vert \nabla \psi_2(f_0(s))\Vert^2 ds \\
		& &+ 
		\frac{1}{2}\int_{0}^{+\infty} k(s)\|\sqrt{A}\eta^0(s)\|^{2} ds+ \frac{1}{2}\int_{-\tau}^{0}\vert \alpha(s+\tau)\vert \cdot \Vert B^*g_0(s) \Vert^2ds 
		\nonumber \\
		&\geq& \frac{1}{2} \|u_1\|^2 + \frac{1-\tilde{k}}{2} \|\sqrt{A} u_0\|^2 
		- \frac{1}{2} h_1(\|\sqrt{A} u_0\|) \|\sqrt{A} u_0\|^2 +\frac{1}{2}\int^{0}_{-\tau}
		\Vert \nabla \psi_2(f_0(s))\Vert^2 ds \\
		& &+ 
		\frac{1}{2}\int_{0}^{+\infty} k(s)\|\sqrt{A}\eta^0(s)\|^{2} ds + \frac{1}{2} \int_{-\tau}^0 |\alpha(s)| \cdot \|B^* g_0(s)\|^2 ds \nonumber \\
		&>& \frac{1}{4} \|u_1\|^2 + \frac{1-\tilde{k}}{4} \|\sqrt{A} u_0\|^2+\frac{1}{4}\int^{0}_{-\tau}
		\Vert \nabla \psi_2(f_0(s))\Vert^2 ds + 
		\frac{1}{4}\int_{0}^{+\infty} k(s)\|\sqrt{A}\eta^0(s)\|^{2} ds\\ \nonumber
		& & + \frac{1}{4} \int_{-\tau}^0 |\alpha(s+\tau)| \cdot \|B^* g_0(s)\|^2 ds.
	\end{eqnarray*}
	So, we have 
	\begin{multline}\label{visco_eq_E_zero}
			\hat{E}(0)> \frac{1}{4} \|u_1\|^2 + \frac{1-\tilde{k}}{4} \|\sqrt{A} u_0\|^2+\frac{1}{4}\int^{0}_{-\tau}
		\Vert \nabla \psi_2(f_0(s))\Vert^2 ds\\ + \frac{1}{4}\int_{0}^{+\infty} k(s)\|\sqrt{A}\eta^0(s)\|^{2} ds
		+ \frac{1}{4} \int_{-\tau}^0 |\alpha(s+\tau)| \cdot \|B^* g_0(s)\|^2 ds>0,
	\end{multline}
	for $u$ being a nonzero solution. Remark also that, from \eqref{visco_eq_E_zero}, we have 
		$$
		\left\|\Ar u_0\right\| <\frac{2 	\hat{E}^{1/2}(0)}{\sqrt{1-\tilde{k}}}<\frac{2\sqrt{2} }{\sqrt{1-\tilde{k}}} C^{1/2}(T)\hat{E}^{1/2}(0),
		$$
		since $C(T)>1.$ Then, due to the assumptions (A2)-(A3) and from the lemma's assumption $$L\left(\frac{2\sqrt{2} }{\sqrt{1-\tilde{k}}} C^{1/2}(T)	\hat{E}^{1/2}(0)\right)<\frac{1-\tilde{k}}{2},$$ it follows
	\begin{equation}\label{visco_E2}
		\Vert \nabla\psi_2(u_0)\Vert \le L (\|\Ar u_0\| ) \left\|\Ar u_0\right\|\le \frac{1-\tilde{k}}{2} \left\|\Ar u_0\right\| .
	\end{equation}
	To establish the second claim, we proceed by contradiction. Denote
	\begin{multline*}
		r :=\sup \bigg\{ s \in [0, \delta) : \ \Vert \nabla\psi_2(u(t))\Vert \le \frac{1-\tilde{k}}{2} \left\|\Ar u(t)\right\|\ \text{and} \\
		\hat{E}(t)> \frac{1}{4} \|u_t(t)\|^2 + \frac{1-\tilde{k}}{4} \|\Ar u(t)\|^2 +\frac{1}{4}\int^{t}_{t-\tau}
		\Vert \nabla \psi_2(u(s))\Vert^2 ds + 
		\frac{1}{4}\int_{0}^{+\infty} k(s)\|\sqrt{A}\eta^t(s)\|^{2} ds\\ + \frac{1}{4}\int_{t-\tau}^{t}\vert \alpha(s+\tau)\vert \cdot \Vert B^*u_t(s) \Vert^2ds \qquad
		\text{hold for all} \ t \in [0, s] .\bigg\}
	\end{multline*}
		The number $r$ is well-defined according to \eqref{visco_eq_E_zero} and \eqref{visco_E2}. We have to show that $r=\delta.$ 
		We suppose by contradiction that $r < \delta$. Then, we have to distinguish two cases.
	\\{\bf Case i)} Suppose that 
		$$\Vert \nabla\psi_2(u(t))\Vert \le \frac{1-\tilde{k}}{2} \left\|\Ar u(t)\right\|, \quad \forall\ t\in [0,r),$$ 
		\begin{eqnarray*}
			\hat{E}(t)&>& \frac{1}{4} \|u_t(t)\|^2 + \frac{1-\tilde{k}}{4} \|\Ar u(t)\|^2 +\frac{1}{4}\int^{t}_{t-\tau}
			\Vert \nabla \psi_2(u(s))\Vert^2 ds \\
			& & + 
			\frac{1}{4}\int_{0}^{+\infty} k(s)\|\sqrt{A}\eta^t(s)\|^{2} ds+ \frac{1}{4}\int_{t-\tau}^{t}\vert \alpha(s+\tau)\vert \cdot \Vert B^*u_t(s) \Vert^2ds ,
		\end{eqnarray*} 
		for all $t\in [0,r)$, and
	\begin{eqnarray*} 
		\hat{E}(r)& = &\frac{1}{4} \| u_t(r) \|^2 + \frac{1-\tilde{k}}{4} \| \sqrt{A} u(r) \|^2 +\frac{1}{4}\int^{r}_{r-\tau}
		\Vert \nabla \psi_2(u(s))\Vert^2 ds\\
		& & + 
		\frac{1}{4}\int_{0}^{+\infty} k(s)\|\sqrt{A}\eta^r(s)\|^{2} ds + \frac{1}{4} \int_{r - \tau}^r |\alpha(s+\tau)| \cdot \| B^* u_t(s) \|^2 ds .
	\end{eqnarray*}	
	In particular, the last identity implies that
	\[
	\frac{1}{4} \|u_t(r)\|^2+\frac{1-\tilde{k}}{4} \Vert \Ar u(r)\Vert^2 \leq \hat{E}(r).
	\]
		Thus, the estimates \eqref{visco_aggiunta} and \eqref{visco_cdt_energy} are satisfied on $[0,r).$ Then, from Proposition \ref{visco_pro_1} and the monotonicity of $h_1$, we get
	\begin{equation}\label{visco_cdt_r_2}
		h_1\left(\Vert \Ar u(r)\Vert\right) \leq h_1\left(\frac{2}{\sqrt{1-\tilde{k}}}{\hat{E}^{1/2}(r)} \right)\leq h_1\left(\frac{2\sqrt{2}}{\sqrt{1-\tilde{k}}}C^{1/2}(T)\hat{E}^{1/2}(0)\right) <\frac{1-\tilde{k} }{2}.
	\end{equation}
	%
	Finally, using \eqref{cdt_psi_1} and \eqref{visco_cdt_r_2}, we can conclude that
	\begin{eqnarray*}
		\hat{E}(r) &=& \frac{1}{2} \| u_t(r) \|^2 + \frac{1-\tilde{k}}{2} \| \sqrt{A} u(r) \|^2 - \psi_1(u(r))+\frac{1}{2}\int^{r}_{r-\tau}
		\Vert \nabla \psi_2(u(s))\Vert^2 ds \\
		& & + 
		\frac{1}{2}\int_{0}^{+\infty} k(s)\|\sqrt{A}\eta^r(s)\|^{2} ds+ \frac{1}{2}\int_{r-\tau}^{r}\vert \alpha(s+\tau)\vert \cdot\Vert B^*u_t(s) \Vert^2ds \\
		&>& \frac{1}{4} \| u_t(r) \|^2 + \frac{1-\tilde{k}}{4} \| \sqrt{A} u(r) \|^2+\frac{1}{4}\int^{r}_{r-\tau}
		\Vert \nabla \psi_2(u(s))\Vert^2 ds\\
		& & + 
		\frac{1}{4}\int_{0}^{+\infty} k(s)\|\sqrt{A}\eta^r(s)\|^{2} ds+ \frac{1}{4} \int_{r - \tau}^r |\alpha(s+\tau)|\cdot \| B^* u_t(s) \|^2 ds.
	\end{eqnarray*}
	contradicting the maximality of $r$. This implies $r = \delta$.

	{\bf Case ii)} Suppose that 
		$$\Vert \nabla\psi_2(u(t))\Vert \le\frac{1-\tilde{k}}{2} \left\|\Ar u(t)\right\|, \quad \forall\ t\in [0,r],$$
		\begin{eqnarray*}
			\hat{E}(t)&>& \frac{1}{4} \|u_t(t)\|^2 + \frac{1-\tilde{k}}{4} \|\Ar u(t)\|^2 +\frac{1}{4}\int^{t}_{t-\tau}
			\Vert \nabla \psi_2(u(s))\Vert^2 ds \\
			& & + 
			\frac{1}{4}\int_{0}^{+\infty} k(s)\|\sqrt{A}\eta^t(s)\|^{2} ds+ \frac{1}{4}\int_{t-\tau}^{t}\vert \alpha(s+\tau)\vert \cdot \Vert B^*u_t(s) \Vert^2ds ,
		\end{eqnarray*} 
		for all $t\in [0,r]$, and
		\begin{equation}\label{visco_E3}
			\Vert \nabla\psi_2(u(s))\Vert >\frac{1-\tilde{k}}{2} \left\|\Ar u(s)\right\|, \quad s\in (r, r+\epsilon),
		\end{equation}
		for some $\epsilon >0$ with $ r+\epsilon<\delta.$
		However, by continuity, there exists $\epsilon'>0$ (take $\epsilon'<\epsilon$) such that the lower bound on the energy still holds on $[r, r+\epsilon'],$
		namely,
		\begin{eqnarray*}
			\hat{E}(t)&>& \frac{1}{4} \|u_t(t)\|^2 + \frac{1-\tilde{k}}{4} \|\Ar u(t)\|^2 +\frac{1}{4}\int^{t}_{t-\tau}
			\Vert \nabla \psi_2(u(s))\Vert^2 ds \\
			& & + 
			\frac{1}{4}\int_{0}^{+\infty} k(s)\|\sqrt{A}\eta^t(s)\|^{2} ds+ \frac{1}{4}\int_{t-\tau}^{t}\vert \alpha(s+\tau)\vert \cdot \Vert B^*u_t(s) \Vert^2ds ,
		\end{eqnarray*}
		for all $t\in [0, r+\epsilon').$
		Moreover, for $\epsilon'$ small enough, we have
		$$
		\Vert \nabla\psi_2(u(s))\Vert \le \|\Ar u(s)\|, \ s\in (r, r+\epsilon'). 
		$$
		Therefore, \eqref{visco_aggiunta} and \eqref{visco_cdt_energy} are satisfied and we can apply Proposition \ref{visco_pro_1} on the larger interval $[0, r+\epsilon')$ obtaining 
		$$
		\Vert \Ar u(r+\epsilon')\Vert\leq \frac{2 \hat{E}^{1/2}((r+\epsilon'))}{\sqrt{1-\tilde{k}}}\leq\frac{2\sqrt{2} }{\sqrt{1-\tilde{k}}} C^{1/2}(T)\hat{E}^{1/2}(0),$$
		that together with (A2)-(A3) implies
		$$\Vert \nabla\psi_2(u(r+\epsilon'))\Vert \le L\left (\frac{2\sqrt{2}}{\sqrt{1-\tilde{k}}}C^{1/2}(T)\hat{E}^{1/2}(0)\right )\left\|\Ar u(r+\epsilon')\right\|<\frac {1-\tilde{k}}{2}\left\|\Ar u(r+\epsilon')\right\|,$$
		in contradiction with \eqref{visco_E3}. Then, it must be $r=\delta.$
\end{proof}

We are now in a position to establish that, for sufficiently small initial data, the solutions to system \eqref{visco_p} are always defined and satisfy the well-posedness assumptions \eqref{visco_U_rho_th}-\eqref{serve2} of Theorem \ref{visco_th_stability}.
\begin{theorem}\label{visco_Stability}	
	Assume \eqref{visco_cdt_alpha_2} and \eqref{cdt_alpha}. Then, there exist $\rho>0$ and $C_\rho>0,$ with
	$L(C_\rho)<\frac {\hat{\omega}-\omega'}{\hat{M}(1+e^{\hat{\omega}\tau})}$, for which if $\hat{\Phi}=(f_0,g_0,\eta^0)\in C([-\tau,0]; \hat{\He})$ satisfies	
		\begin{multline}\label{visco_smallnessID}
			\|u_1\|^2 + (1-\tilde{k})\|\sqrt{A} u_0\|^2 
			+\int^{0}_{-\tau}
			\Vert \nabla \psi_2(u_0(s))\Vert^2 ds+\\ \int_{0}^{+\infty} k(s)\|\sqrt{A}\eta^0(s)\|^{2} ds+ \int_{-\tau}^0 |\alpha(s+\tau)| \cdot \|B^* g_0(s)\|^2 ds< \rho^2
		\end{multline}
		and
		\begin{equation}\label{visco_additional}
			\max _{s\in [-\tau,0]} \Vert \hat{\Phi}(s)\Vert_{\hat{\mathcal H}}<\rho,
	\end{equation}
	then, problem \eqref{visco_P_abstract} has a unique global solution 
	satisfying 
	$$\Vert U(t)\Vert_{\hat{\mathcal H}}\le C_\rho,\quad \forall \ t\ge 0.$$
\end{theorem}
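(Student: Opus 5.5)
The proof follows the scheme of Theorem \ref{Stability}, with Lemma \ref{visco_lem_abst_exis}, Proposition \ref{visco_pro_1}, Lemma \ref{visco_exis_lemma} and Theorem \ref{visco_th_stability} replacing their frictional counterparts. The factor $1-\tilde{k}$ has to be tracked throughout, because $\hat{E}(t)>\frac14\|U(t)\|^2_{\hat{\He}}$ while $\|\Ar u\|$ is only controlled by $\frac{2}{\sqrt{1-\tilde k}}\hat E^{1/2}$.

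\textbf{Choice of constants.} First fix $T\ge\tau$ so large that
\[
\hat C_T:=2\hat M^2e^{2\gamma}\max\left\{1+e^{\hat\omega\tau}(b\sigma+\tau),\,e^{\hat\omega\tau}\right\}\left(1+e^{2\hat\omega\tau}(\sigma+\tau)^2\right)e^{-(\hat\omega-\omega')T}<1 .
\]
Set $C_T^*$ as before; it is finite by \eqref{cdt_alpha} and satisfies $C_T^*\ge C(T)$. Then choose $\rho$ so that three conditions hold:
\[
h_1\!\left(\tfrac{2\sqrt{2C_T^*}}{\sqrt{1-\tilde k}}\rho\right)<\tfrac{1-\tilde k}{2},\qquad L\!\left(\tfrac{2\sqrt{2C_T^*}}{\sqrt{1-\tilde k}}\rho\right)<\tfrac{1-\tilde k}{2},\qquad L(C_\rho)<\tfrac{\hat\omega-\omega'}{2\hat M(1+e^{\hat\omega\tau})},
\]
with $C_\rho:=2\sqrt{C_T^*}\rho$. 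This is possible because $L(R)\to0$ as $R\to 0$ and $h_1$ is increasing with $h_1(0)=0$ (using (A3) and continuity).

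\textbf{Estimates on $[0,T]$.} By \eqref{cdt_psi_1}, the smallness condition \eqref{visco_smallnessID} gives $\hat E(0)\le\rho^2$; the bound $\frac12 h_1\|\Ar u_0\|^2\le \frac{1-\tilde k}{4}\|\Ar u_0\|^2$ absorbs the $\psi_1$ term. Lemma \ref{visco_exis_lemma} then applies to the local solution of Lemma \ref{visco_lem_abst_exis}. Together with Proposition \ref{visco_pro_1} it gives $\frac14\|U(t)\|^2_{\hat\He}<\hat E(t)\le C_T^*\hat E(0)<C_T^*\rho^2$. This bound lets us continue the solution to all of $[0,T]$ with $\|U\|\le C_\rho$. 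Theorem \ref{visco_th_stability} then yields decay at rate $\frac{\hat\omega-\omega'}{2}$. As in the frictional proof, we bound $\hat\beta$ and the $\alpha$-integral of $\hat\Phi$ through \eqref{visco_additional} and \eqref{cdt_alpha}, and obtain
\[
\|U(t)\|^2_{\hat\He}\le 2\hat M^2\rho^2e^{2\gamma}\left(1+e^{2\hat\omega\tau}(\sigma+\tau)^2\right)e^{-(\hat\omega-\omega')t}.
\]

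\textbf{Restarting at $t=T$.} On $[T-\tau,T]$ we control three pieces: the $\alpha$-weighted $\|B^*u_t\|^2$ term, the term $\int\|\nabla\psi_2(u)\|^2\le\int\|\Ar u\|^2$ (using $L<1$ there), and $\|U(T)\|^2_{\hat\He}$. Here the memory component $\int k\|\Ar\eta^T\|^2$ is already part of $\|U(T)\|^2_{\hat\He}$. The sum of the three pieces is bounded by $\hat C_T\rho^2<\rho^2$, and likewise $\max_{[T-\tau,T]}\|U\|^2<\rho^2$. These bounds reproduce \eqref{visco_smallnessID}–\eqref{visco_additional} at time $T$.

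\textbf{Main obstacle.} The delicate point is the restart. The history variable $\eta^T$ encodes all of $u$ on $(-\infty,T]$, not only the window $[T-\tau,T]$. So one must check that the shifted problem on $[T,2T]$, whose initial datum is $U(s)$ for $s\in[T-\tau,T]$ with memory component $\eta^T$, fits the framework of Lemma \ref{visco_lem_abst_exis}. One must also check that its energy $\hat{\mathcal E}(T)=\hat E(T)$ is at most the quantity just estimated. Since $\hat{\mathcal E}(T)$ is controlled by $\|U(T)\|^2_{\hat\He}$ plus the two delay integrals (the $-\psi_1$ term again being absorbed), this should go through. Lemma \ref{visco_exis_lemma} and Proposition \ref{visco_pro_1}, applied on $[T,2T]$ with $C_T^*$ dominating the growth factor there, then give $\|U\|\le C_\rho$ on $[T,2T]$. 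Uniqueness lets us glue the two pieces into a solution satisfying Duhamel's formula \eqref{visco_formule_U_rho} on $[0,2T]$. Theorem \ref{visco_th_stability} applies on $[0,2T]$, and the resulting decay restores the same smallness at $2T$, since $e^{-(\hat\omega-\omega')2T}\le e^{-(\hat\omega-\omega')T}$. Induction on $n$ then gives a global solution with $\|U(t)\|_{\hat\He}\le C_\rho$.
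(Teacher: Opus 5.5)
Your proposal is correct and follows the paper's proof essentially step by step: the same choice of $T$, $C_T^*$ and $\rho$, the bound $\frac14\|U\|^2_{\hat{\He}}<\hat E\le C_T^*\hat E(0)$ on $[0,T]$ from Lemma \ref{visco_exis_lemma} and Proposition \ref{visco_pro_1}, the decay from Theorem \ref{visco_th_stability}, the smallness check on $[T-\tau,T]$, the restart in the abstract setting (where $\eta^T$ is just part of the state $U(T)$) with $\hat{\mathcal E}(T)=\hat E(T)$, and then induction. The only slip is bookkeeping: bounding $\int_{T-\tau}^{T}\|\nabla\psi_2(u)\|^2$ via $L<1$ costs a factor $1/(1-\tilde k)$ that your $\hat C_T$ does not contain, but the condition $L<\frac{1-\tilde k}{2}$ you already imposed gives $\|\nabla\psi_2(u)\|^2\le\frac{1-\tilde k}{4}\|U\|^2_{\hat{\He}}$, so your constant works as written.
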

\begin{proof}
	Let $T$ be a fixed time with $T \geq \tau$ such that,
	\begin{equation*}
			C_T := 
			2\hat{M}^2 e^{2\gamma}\max \left\{\left(1+ e^{\tau} \left[b\sigma+\tau\left(1-\tilde{k}\right)^2\right]\right), e^{\hat{\omega}\tau}\right \}\left(1 + e^{2\hat{\omega} \tau} (\sigma+\tau)^2 \right) e^{-(\hat{\omega} - \omega') T}
			< 1. 
	\end{equation*}
	Also, let us denote
	\begin{equation*}
			C^*_T :=\sup \left\{e^{2T+2b^2\int_{nT}^{(n+1)T} \left(\vert\alpha(s)\vert+\vert\alpha(s+\tau)\vert\right) ds},\; n\in \mathbb{N} \right\}.
	\end{equation*}
	Condition \eqref{cdt_alpha} guarantees that $C_T^*$ is finite. Note that $C_T^* \geq C(T)$, where $C(T)$ is defined in \eqref{constant_t}. 
	Next, let $\rho > 0$ be chosen such that
	\begin{equation}\label{visco_rho_condition}
			\rho \leq \frac{\sqrt{1-\tilde{k}}}{2 \sqrt{2C_T^*}} h^{-1}\left(\frac{1-\tilde{k}}{2}\right) \quad\text{and} \quad L\left( \frac{2 \sqrt{2}}{\sqrt{1-\tilde{k}}}\sqrt{C^*_T} \rho \right) < \frac{1-\tilde{k}}{2}, 
	\end{equation}
	and assume the conditions
	\begin{multline*}
		\|u_1\|^2 + (1-\tilde{k})\|\sqrt{A} u_0\|^2 + \int_{0}^{+\infty} k(s)\|\sqrt{A}\eta^0(s)\|^{2} ds \\
		+\int^{0}_{-\tau}
		\Vert \nabla \psi_2(u_0(s))\Vert^2 ds+\int_{-\tau}^0 |\alpha(s+\tau)| \cdot \|B^* g_0(s)\|^2 ds< \rho^2,
	\end{multline*}
	and
	$$\max _{s\in [-\tau,0]} \Vert \hat{\Phi}(s)\Vert_{\hat{\mathcal H}}<\rho.$$
	This implies, by considering the abstract setting, i.e., the system \eqref{visco_P_abstract},
	\begin{equation}\label{visco_rho_abstract}
		\|U_0\|_{\hat{\He}}^2 +\int_{0}^{\tau}
		\Vert \nabla \psi_2(u_0(s-\tau))\Vert^2 ds + \int_0^\tau |\alpha(s)| \cdot \| B^* g_0(s-\tau)\|^2 \, ds < \rho^2.
	\end{equation}
	
	From Lemma \ref{visco_lem_abst_exis}, we know that there exists a solution \( u \) to problem \eqref{visco_p} on a time interval \( [0,\delta) \). Now, we have
	\begin{equation}\label{visco_cdt_psi_1}
		h_1\left(\left\|\sqrt{A} u_0\right\| \right) \leq h_1\left(\frac{\rho}{\sqrt{1-\tilde{k}}}\right) \leq h_1\left( \frac{1}{2\sqrt{2C^*_T}} h_1^{-1}\left(\frac{1-\tilde{k}}{2}\right) \right) < \frac{1-\tilde{k}}{2},
	\end{equation}
	
	using the fact that \( C^*_T > 1 \). Thus, by Lemma \ref{visco_exis_lemma}, \( \hat{E}(0) > 0 \). Moreover, from \eqref{visco_cdt_psi_1}, we obtain
	\begin{eqnarray*}
		\hat{E}(0) &\leq &\frac{1}{2} \|u_1\|^2 + \frac{3}{4} (1-\tilde{k})\|\sqrt{A} u_0\|^2 + \frac{1}{2} \int_{0}^{+\infty} k(s) \|\sqrt{A} \eta^0(s)\|^2\, ds \\
		& & +\frac{1}{2}\int^{0}_{-\tau}
		\Vert \nabla \psi_2(u_0(s))\Vert^2 ds + \frac{1}{2} \int_{-\tau}^0 |\alpha(s+\tau)| \cdot \|B^* g_0(s)\|^2 \, ds \\
		&<& \rho^2,
	\end{eqnarray*}
	which gives us
	\begin{equation}\label{visco_h_cdt_1_2}
		h_1\left(2\frac{ \sqrt{2C^*_T \hat{E}(0)}}{\sqrt{1-\tilde{k}}} \right) < h_1\left( 2\frac{\sqrt{2C^*_T} }{\sqrt{1-\tilde{k}}}\rho \right) < h_1\left( h_1^{-1}\left( \frac{1-\tilde{k}}{2} \right) \right) = \frac{1-\tilde{k}}{2},
	\end{equation}
	and
\begin{equation}\label{visco_suL}
			L\left(\frac{2 \sqrt{2}}{\sqrt{1-\tilde{k}}}\sqrt{C^*_T \hat{E}(0)}\right) \leq L\left( \frac{2 \sqrt{2}}{\sqrt{1-\tilde{k}}}\sqrt{C^*_T} \rho \right) < \frac{1-\tilde{k}}{2}.
		\end{equation}

	Therefore, by applying Lemma \ref{visco_exis_lemma} once more, we can conclude that \eqref{visco_estimate_psi2} and \eqref{visco_estimate_energy_existance} hold for all \( t \in [0, \delta) \). Using Proposition \ref{visco_pro_1}, we obtain
	\begin{multline}\label{visco_eq_E_bound}
		\qquad 0< \frac{1}{4} \| u_t(t) \|^2 + \frac{1-\tilde{k}}{4} \| \sqrt{A} u(t) \|^2 +\frac{1}{4}\int^{t}_{t-\tau}
		\Vert \nabla \psi_2(u(s))\Vert^2 ds\\ + \frac{1}{4} \int_{t - \tau}^t |\alpha(s+\tau)| \| B^* u_t(s) \|^2 ds + \frac{1}{4} \int_{0}^{+\infty} k(s) \|\sqrt{A} \eta^t(s)\|^2\, ds\leq \hat{E}(t)\leq C^*_T \hat{E}(0), 
	\end{multline}
	for any \( t \in [0, \delta) \). Then, we can extend the solution on the entire interval \( [0, T] \). From \eqref{visco_h_cdt_1_2} and \eqref{visco_eq_E_bound}, for \( t = T \), we have
	\begin{equation}\label{visco_sqrt_A_u_t}
		h_1\left( \| \sqrt{A} u(T)\| \right) \leq h_1(2\frac{\sqrt{\hat{E}(T)}}{\sqrt{1-\tilde{k}}}) \leq h_1\left( 2\frac{\sqrt{2C^*_T \hat{E}(0)}}{\sqrt{1-\tilde{k}}} \right) \leq h_1\left(2 \frac{\sqrt{2C^*_T}}{\sqrt{1-\tilde{k}}} \rho \right) < \frac{1-\tilde{k}}{2}.
	\end{equation}
	
	Moreover, from the assumption \eqref{visco_rho_abstract} on the initial data $(U_0, \eta^0)$ and
	from \eqref{visco_eq_E_bound}, we arrive at
	\[
	\frac{1}{4} \|U(t)\|_{\hat{\He}}^2 \leq \hat{E}(t) \leq C^*_T \hat{E}(0) < C^*_T \rho^2, \quad \forall t \in [0, T],
	\]
	and so
	\begin{equation}\label{visco_U_rdo_leq}
		\|U(t)\|_{\hat{\He}} \leq C_{\rho} := 2\sqrt{C^*_T} \rho,
	\end{equation}
	for any \( t \in [0, T] \). By choosing eventually smaller values of \( \rho \), we assume that \( \rho \) is such that \( L(C_\rho) < \frac{\hat{\omega} - \omega'}{2\hat{M}(1+e^{\hat{\omega}\tau})} \). Thus, the well-posedness assumption is satisfied on the interval \( [0, T] \). Applying Theorem \ref{visco_th_stability}, we then obtain the following estimate:
	\begin{equation}\label{visco_eq_23}
		\|U(t)\|_{\hat{\He}} \leq \hat{M} e^{\gamma} \left( \|U_0\|_{\hat{\He}} +\hat{\beta} + \int_0^\tau e^{\hat{\omega} s} |\alpha(s)| \cdot \| \hat{\Phi}(s-\tau)\|_{\hat{\He}} ds \right) e^{-\frac{\hat{\omega} - \omega'}{2} t}, 
	\end{equation}
	for any $t \in [0, T]$. By using assumption \eqref{cdt_alpha}, \eqref{visco_additional} and H\"{o}lder inequality, we get
	\[
	\int_0^{\tau} |\alpha(s)| e^{\hat{\omega} s} \| \hat{\Phi}(s-\tau)\|_{\hat{\He}} ds \leq e^{\hat{\omega} \tau} \sqrt{\int_0^{\tau} |\alpha(s)| ds }\sqrt{\int_0^{\tau} |\alpha(s)| \cdot \| \hat{\Phi}(s-\tau)\|_{\hat{\He}}^2 ds }\leq e^{\hat{\omega} \tau}\sigma \rho.
	\]
	According to \eqref{visco_beta_eq}, \eqref{visco_additional} and \eqref{visco_suL}, we get
	$$\hat{\beta}\le \frac 12 \int_0^\tau e^{\hat{\omega} s} \Vert \hat{\Phi} (s-\tau)\Vert_{\hat{\mathcal{H}}} ds \le \frac 12 e^{\hat{\omega}\tau}\rho \tau.
	$$
	From \eqref{visco_eq_23}, we arrive at
	\begin{eqnarray}\label{visco_eq_24}\notag
		\|U(t)\|^2_{\hat{\He}} &\leq& \hat{M}^2 e^{2\gamma} \left( \|U_0\|_{\hat{\He}} + e^{\hat{\omega} \tau} \rho (\sigma+\tau) \right)^2 e^{-(\hat{\omega} - \omega')t}, \\
		&\leq&2 \hat{M}^2 e^{2\gamma} \left( \|U_0\|_{\hat{\He}}^2 + e^{2\hat{\omega} \tau} (\sigma+\tau)^2 \rho^2 \right) e^{-(\hat{\omega} - \omega')t}, 
	\end{eqnarray}
	which, by using \eqref{visco_rho_abstract}, leads to
	\begin{equation}\label{visco_U_estimate_bound_above}
		\|U(t)\|^2_{\hat{\He}} \leq 2\hat{M}^2 \rho^2 e^{2\gamma} \left(1 + e^{2\hat{\omega} \tau} (\sigma+\tau)^2 \right) e^{-(\hat{\omega} - \omega') t}, 
	\end{equation}
	for all \( t \in [0, T] \). Also, for all \( s \in [T - \tau, T] \subseteq [0, T] \), \eqref{visco_U_estimate_bound_above} yields
		\begin{eqnarray*}
			\Vert u_t(s)\Vert^2 &\leq & \|U(s)\|_{\hat{\mathcal{H}}}^2 \leq 2\hat{M}^2 \rho^2 e^{2\gamma} \left(1 + e^{2\hat{\omega} \tau} (\sigma+\tau )^2 \right) e^{-(\hat{\omega} - \omega') s} \\
			&\leq & 2\hat{M}^2 \rho^2 e^{2\gamma+\hat{\omega}\tau} \left(1 + e^{2\hat{\omega} \tau} (\sigma+\tau)^2 \right) e^{-(\hat{\omega} - \omega') T}, 
		\end{eqnarray*}
	Consequently, by \eqref{cdt_alpha},
		\begin{equation}\label{visco_nuova1}
			\begin{array}{l}
				\displaystyle{ \int_{T-\tau}^{T} |\alpha(s)| \cdot \|B^* u_t(s)\|^2 ds \leq 2\hat{M}^2 \rho^2 e^{2\gamma+\hat{\omega}\tau} b\left(1 + e^{2\hat{\omega} \tau} (\sigma+\tau)^2 \right) e^{-(\hat{\omega} - \omega') T} \int_{T-\tau}^{T} |\alpha(s)| ds} \\
				\displaystyle{ \hspace{4.3 cm}\leq 2\hat{M}^2 \rho^2 e^{2\gamma+\hat{\omega}\tau} b\sigma\left(1 + e^{2\hat{\omega} \tau} (\sigma+\tau)^2 \right) e^{-(\hat{\omega} - \omega') T}. }
			\end{array}
		\end{equation}
	Moreover, from
		\begin{equation*}
			\int^{T}_{T-\tau} \Vert \nabla \psi_2(u(s))\Vert^2 ds \leq \int^{T}_{T-\tau} L^2\left(\left\|\sqrt{A}u(s)\right\|\right) \left\|\sqrt{A}u(s)\right\|^2ds,
		\end{equation*}
		since we have
		$$ L\left(\Vert \Ar u(s)\Vert\right) \leq L\left(\frac{2}{\sqrt{1-\tilde{k}}}{\sqrt{\hat{E}(s)}} \right)\leq L\left(\frac{2\sqrt{2}}{\sqrt{1-\tilde{k}}}\sqrt{C^*_T\hat{E}(0)}\right) <\frac{1-\tilde{k} }{2} $$
		and
		\begin{eqnarray*}
			(1-\tilde{k})\left\|\sqrt{A}u(s)\right\|^2 &\leq & \|U(s)\|_{\hat{\mathcal{H}}}^2 \leq 2\hat{M}^2 \rho^2 e^{2\gamma} \left(1 + e^{2\hat{\omega} \tau} (\sigma+\tau)^2 \right) e^{-(\hat{\omega} - \omega') s} \\
			&\leq & 2\hat{M}^2 \rho^2 e^{2\gamma+\hat{\omega}\tau} \left(1 + e^{2\hat{\omega} \tau} (\sigma+\tau)^2 \right) e^{-(\hat{\omega} - \omega') T}, 
		\end{eqnarray*}
		it follows
		\begin{equation}\label{visco_cdt_bound_psi}
			\int^{T}_{T-\tau} \Vert \nabla \psi_2(u(s))\Vert^2 ds \leq 2\hat{M}^2 \rho^2 e^{2\gamma+\hat{\omega}\tau} \tau\left(1-\tilde{k}\right)\left(1 + e^{2\hat{\omega} \tau} (\sigma+\tau)^2 \right) e^{-(\hat{\omega} - \omega') T} .
		\end{equation}

	This last fact together with \eqref{visco_U_estimate_bound_above} implies that
	\begin{multline}\label{visco_U_T_rho}
		\|U(T)\|_{\hat{\mathcal{H}}}^2 + \int^{T}_{T-\tau} \Vert \nabla \psi_2(u(s))\Vert^2 ds + \int_{T-\tau}^{T} |\alpha(s)| \cdot \|B^* u_t(s)\|^2 \, ds \\
		\leq 2\hat{M}^2 \rho^2 e^{2\gamma}\left(1+ e^{\hat{\omega}\tau} \left[b\sigma+\tau \left(1-\tilde{k}\right)\right]\right)\left(1 + e^{2\hat{\omega} \tau} \left(\sigma+\tau\right)^2 \right) e^{-(\hat{\omega} - \omega') T} \leq C_T \rho^2 < \rho^2.
	\end{multline}
		Moreover, from \eqref{visco_eq_24}, we have
		\begin{equation}\label{visco_U_rho}
			\max_{s\in [T-\tau, T]} \|U(s)\|_{\hat{\mathcal{H}}}^2< 
			2\hat{M}^2 \rho^2 e^{2\gamma+\hat{\omega}\tau} \left(1 + e^{2\hat{\omega} \tau }(\sigma+\tau)^2 \right) e^{-(\hat{\omega} - \omega') T}\le C_T\rho^2<\rho^2.
	\end{equation}
Conditions \eqref{visco_U_T_rho} and \eqref{visco_U_rho} allow us to replicate the previous arguments on the interval \([T, 2T]\).

 Specifically, we consider the initial value problem
	\begin{equation}\label{visco_P_abstract_222}
		\left\{
		\begin{array}{ll}
			W_t(t) = \hat{\A} W(t) +\alpha(t)\hat{\B} W(t - \tau)+ \hat{F}(W(t), W(t - \tau)), &\quad t \in [T, 2T], \\
			W(s) = U(s), \quad s \in [T - \tau, T],
		\end{array}
		\right.
	\end{equation}
	where \(U(\cdot)\) denotes the solution to \eqref{visco_P_abstract} on \([0, T]\).
	Now, let introduce the energy functional for the solution:
	\begin{eqnarray}\label{visco_function_E_222} \notag \hat{\mathcal{E}}(t)&=&\frac{1}{2}\left\|w_{t}(t)\right\|^{2}+\frac{ 1 - \tilde{k}}{2}\left\|\sqrt{A}w(t)\right\|^{2}-\psi_1(w(t))+\frac{1}{2}\int^{t}_{t-\tau}
		\Vert \nabla \psi_2(w(s))\Vert^2 ds \\
		& &+ \frac{1}{2} \int_{0}^{+\infty} k(s) \|\sqrt{A} \eta^t(s)\|^2\, ds+ \frac{1}{2}\int_{t-\tau}^{t}\vert \alpha(s+\tau)\vert \Vert B^*w_t(s) \Vert^2ds , 
	\end{eqnarray}
	where $\eta^t(s) := w(t) - w(t-s)$. Observe that $\hat{\mathcal{E}}(T) = \hat{E}(T)$. By Lemma \ref{visco_lem_abst_exis}, problem \eqref{visco_P_abstract_222} with initial data \(U(s)\), \(s \in [T - \tau, T]\), admits a unique local solution \(W(\cdot)\) on \([T, T + \delta)\) given by Duhamel's formula:
	\begin{equation}\label{visco_Formule_W_2T}
		W(t) = \hat{S}(t - T) U(T) + \int_T^t \hat{S}(t - T - s) \bigl[\alpha(s)\hat{\B} W(s - \tau)+ \hat{F}(W(s), W(s - \tau))\bigr] \, ds,
	\end{equation}
	for all \(t \in [T, T + \delta)\). We may assume \(\delta \leq \tau\) and that \(W\) is nontrivial; otherwise, \eqref{visco_U_rho} holds trivially for all \(t \geq T\).
	From \eqref{visco_sqrt_A_u_t}, we have \(\hat{\mathcal{E}}(T) > 0\). If
	$$ \hat{\mathcal{E}}(t) \geq \frac{1}{4} \|w_t(t)\|^2+\frac{1-\tilde{k}}{4} \|\sqrt{A} w(t)\|^2,$$
	and 
		$\Vert \nabla\psi_2(w(t))\Vert\le(1-\tilde{k}) \Vert \sqrt{A}w(t) \Vert$, for all \(t \in [T, T + \delta)\), then, following Proposition \ref{visco_pro_1}, we obtain the growth estimate
	\begin{equation}\label{visco_estiamte_energy_22}
	\hat{\mathcal {E}}(t)\leq e^{2\int_{T}^{t}[ 1+ \left(\vert\alpha(s)\vert+\vert\alpha(s+\tau)\vert\right)b] ds}\hat{\mathcal{E}}(T), \quad \forall t \in [T, T + \delta) ,
	\end{equation}
	Additionally, by applying the same reasoning as in Lemma \ref{visco_exis_lemma} and noting that $$C^*_T \geq e^{2\int_{T}^{2T}[ 1+ \left(\vert\alpha(s)\vert+\vert\alpha(s+\tau)\vert\right)b] ds},$$ if
\begin{equation}\label{visco_cdt_h_T_2T}
		h_1\left(\frac{2\sqrt{2} }{\sqrt{1-\tilde{k}}} (C_T^*)^{1/2}{\hat{\mathcal E}}^{1/2}(T)\right) <\frac{1-\tilde{k}}{2} \quad \text{and} \quad L\left(\frac{2\sqrt{2} }{\sqrt{1-\tilde{k}}} (C_T^*)^{1/2}{\hat{\mathcal E}}^{1/2}(T)\right) <\frac{1-\tilde{k}}{2},
	\end{equation}
	thus, for every $t \in [T, T + \delta)$,
	\begin{eqnarray}\label{visco_function_E_333} \notag
		\hat{\mathcal{E}}(t)&>& \frac{1}{4}\left\|w_{t}(t)\right\|^{2}+\frac{1-\tilde{k}}{4}\left\|\sqrt{A}w(t)\right\|^{2}+\frac{1}{4}\int^{t}_{t-\tau}
		\Vert \nabla \psi_2(w(s))\Vert^2 ds \\
		& &+ \frac{1}{4} \int_{0}^{+\infty} k(s) \|\sqrt{A} \eta^t(s)\|^2\, ds + \frac{1}{4}\int_{t-\tau}^{t}\vert \alpha(s+\tau)\vert \Vert B^*w_t(s) \Vert^2ds ,
	\end{eqnarray}
	and 
		\begin{equation}\label{visco_quasi}
			\Vert \nabla\psi_2(w(t))\Vert\le(1-\tilde{k}) \Vert \sqrt{A}w(t) \Vert.
		\end{equation}
	which leads to 
	\begin{equation}\label{visco_E_W_cdt}	\hat{\mathcal{E}}(t)>\frac{1}{4}\left\|W(t)\right\|_{\hat{\He}}^{2}, \quad \forall t \in [T, T + \delta).
	\end{equation} 
	Note that, by \eqref{visco_U_T_rho}, we have $\hat{\mathcal{E}}(T)<\rho$, thus, the condition \eqref{visco_cdt_h_T_2T} is satisfied. Consequently, \eqref{visco_function_E_333}, \eqref{visco_E_W_cdt} and \eqref{visco_quasi} hold for all $t \in [T, T + \delta)$.
	As a result, the inequality \eqref{visco_estiamte_energy_22} is fulfilled. Combining \eqref{visco_estiamte_energy_22} and \eqref{visco_function_E_333}, we finally get
	\begin{multline}\label{visco_eq_E_bound_T}
		\frac{1}{4} \|w_t(t)\|^2 + \frac{1}{4} \|\sqrt{A} w(t)\|^2 +\frac{1}{4}\int^{t}_{t-\tau}
		\Vert \nabla \psi_2(w(s))\Vert^2 ds 
		+ \frac{1}{4} \int_{t - \tau}^t |\alpha(s+\tau)| \cdot \|B^* w_t(s)\|^2 ds\\+ \frac{1}{4} \int_{0}^{+\infty} k(s) \|\sqrt{A} \eta^t(s)\|^2\, ds<\hat{\mathcal{E}}(t) \leq e^{2\int_{T}^{2T}[ 1+ \left(\vert\alpha(s)\vert+\vert\alpha(s+\tau)\vert\right)b] ds} \hat{\mathcal{E}}(T) \leq C_T^* \hat{\mathcal{E}}(T).
	\end{multline}
	Therefore, the solution $W$ remains bounded, allowing us to extend it to $t = T + \delta$ and across the entire interval $[T, 2T]$. Furthermore, the estimates \eqref{visco_eq_E_bound_T} and \eqref{visco_E_W_cdt} remain valid for every $t$ in $[T, 2T]$. Consequently, we obtain
	\begin{equation}\label{visco_W_rho_leq}
		\|W(t)\|_{\hat{\He}} \leq 2\sqrt{C_T^* \hat{\mathcal{E}}(T)} \leq 2\sqrt{C_T^*} \rho = C_\rho.
	\end{equation}
	By combining the two partial solutions \eqref{visco_formule_U_rho} and \eqref{visco_Formule_W_2T} obtained on the time intervals $[0, T]$ and $[T, 2T]$, respectively, we establish the existence of a unique solution $U \in C([0, 2T]; \hat{\He})$ to \eqref{visco_P_abstract} defined on $[0, 2T]$ and satisfying Duhamel's formula \eqref{visco_formule_U_rho} for all $t \in [0, 2T]$.

Additionally, from \eqref{visco_U_rdo_leq} and \eqref{visco_W_rho_leq}, the solution $U$ satisfies \eqref{visco_U_rho_th} on the interval $[0, 2T]$. Hence, since \( L(C_\rho) < \frac{\hat{\omega} - \omega'}{2\hat{M}(1+e^{\hat{\omega}\tau})} \), the exponential decay estimate \eqref{visco_stab_estimate_2} holds for $U$; namely, 
	\begin{equation}\label{visco_stab_estimate_2_needed}
		\left\Vert U(t)\right\Vert_{\hat{\He}} \leq \hat{M}e^\gamma\left(\left\Vert U_0\right\Vert_{\hat{\He}}+\hat{\beta}+\int_{0}^{\tau} e^{\hat{\omega} s}\vert \alpha(s)\vert\cdot\Vert \hat{\Phi}(s-\tau)\Vert_{\hat{\He}} ds \right) e^{-\frac{\hat{\omega} - \omega'}{2} t}.
	\end{equation}
	for all $t \in [0, 2T]$. Again, we deduce that \eqref{visco_U_estimate_bound_above} holds for all $t \in [0, 2T]$. Furthermore, as before, we can obtain \eqref{visco_nuova1} and \eqref{visco_cdt_bound_psi} on $[2T - \tau, 2T].$

	Then, similar to \eqref{visco_U_T_rho}, we get
	\begin{multline}\label{visco_U_2T_rho}
		\|U(2T)\|_{\hat{\mathcal{H}}}^2 + \int^{2T}_{2T-\tau} \Vert \nabla \psi_2(u(s))\Vert^2 ds + \int_{2T-\tau}^{2T} |\alpha(s)| \cdot \|B^* u_t(s)\|^2 \, ds \\
		\leq 2\hat{M}^2 \rho^2 e^{2\gamma}\left(1+ e^{\hat{\omega}\tau} \left[b\sigma+\tau \left(1-\tilde{k}\right)\right]\right)\left(1 + e^{2\hat{\omega} \tau} \left(\sigma+\tau\right)^2 \right) e^{-(\hat{\omega} - \omega') T} \leq C_T \rho^2 < \rho^2.
	\end{multline}
	Moreover,
		\begin{equation}\label{visco_U_rho_2}
		\max_{s\in [2T-\tau, 2T]} \|U(s)\|_{\hat{\mathcal{H}}}^2< 2\hat{M}^2 \rho^2 e^{2\gamma+\hat{\omega}\tau} \left(1 + e^{2\hat{\omega} \tau }(\sigma+\tau)^2 \right) e^{-(\hat{\omega} - \omega') T}\le C_T\rho^2<\rho^2.
		\end{equation}
	The inequalities \eqref{visco_U_2T_rho} and \eqref{visco_U_rho_2} enable us to repeat the same reasoning used earlier on the interval $[2T, 3T]$. This yields the existence of a unique solution to \eqref{visco_P_abstract} defined on $[0, 3T]$ that satisfies \eqref{visco_formule_U_rho} for every $t \in [0, 3T]$. By iterating this process, we ultimately obtain a unique global solution $U \in C([0, +\infty); \hat{\He})$ to \eqref{visco_P_abstract}, which fulfills \eqref{visco_formule_U_rho}.

	Consequently, we have shown that for sufficiently small initial data, solutions to problem \eqref{visco_P_abstract} exist globally in time and are bounded by a positive constant $C_\rho$ satisfying \( L(C_\rho) < \frac{\hat{\omega} - \omega'}{2\hat{M}(1+e^{\hat{\omega}\tau})} \).
\end{proof}

As a direct consequence of Theorems \ref{visco_Stability} and \ref{visco_th_stability}, we obtain the following energy decay result for the viscoelastic model \eqref{visco_p} under small initial data.	
		\begin{theorem}\label{last_visco}
			Assume \eqref{visco_cdt_alpha_2} and \eqref{cdt_alpha}. There exists $\rho>0$ 
			such that, if the initial datum $\hat{\Phi}=(f_0,g_0,\eta^0)\in C([-\tau,0]; \hat{\He})$ satisfies
			\begin{multline*}
				\|u_1\|^2 + (1-\tilde{k})\|\sqrt{A} u_0\|^2 
				+\int^{0}_{-\tau}
				\Vert \nabla \psi_2(u_0(s))\Vert^2 ds+\\ \int_{0}^{+\infty} k(s)\|\sqrt{A}\eta^0(s)\|^{2} ds+ \int_{-\tau}^0 |\alpha(s+\tau)| \cdot \|B^* g_0(s)\|^2 ds< \rho^2.
			\end{multline*}
			and
			$$
			\max _{s\in [-\tau,0]} \Vert \hat{\Phi}(s)\Vert_{\hat{\mathcal H}}<\rho,
			$$
			then
			problem \eqref{visco_p} has a unique global solution 
			satisfying 
			$$\hat{E}(t)\le \tilde C e^{-(\hat{\omega}-\omega') t},\quad \forall \ t\ge 0,$$
			where $\tilde C$ is a suitable constant depending on the initial data.
		\end{theorem}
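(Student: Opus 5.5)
Theorem \ref{last_visco} follows by combining Theorem \ref{visco_Stability} with Theorem \ref{visco_th_stability} and then comparing the energy $\hat E$ with the norm in $\hat{\He}$. We take $\rho>0$ as in Theorem \ref{visco_Stability}, shrinking it if necessary so that the conditions \eqref{visco_rho_condition} hold and $L(C_\rho)<\frac{\hat\omega-\omega'}{2\hat M(1+e^{\hat\omega\tau})}$. Then Theorem \ref{visco_Stability} gives a unique global solution with $\|U(t)\|_{\hat{\He}}\le C_\rho$ for all $t\ge 0$. Since $\|\hat\Phi(s)\|_{\hat{\He}}<\rho\le C_\rho$ on $[-\tau,0]$, the well-posedness assumption \eqref{visco_U_rho_th} holds on $[-\tau,+\infty)$ with $C'=C_\rho$. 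Theorem \ref{visco_th_stability} with $T'=+\infty$ then yields
\[
\|U(t)\|_{\hat{\He}}\le K e^{-\frac{\hat\omega-\omega'}{2}t},\qquad t\ge0,
\]
where $K=\hat M e^{\gamma}\bigl(\|U_0\|_{\hat{\He}}+\hat\beta+\int_0^\tau e^{\hat\omega s}|\alpha(s)|\,\|\hat\Phi(s-\tau)\|_{\hat{\He}}ds\bigr)$ depends only on the initial data. Squaring gives the rate $e^{-(\hat\omega-\omega')t}$ required in the statement.

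Next we bound $\hat E(t)$ above by quantities that decay at this rate. We go through the terms of \eqref{visco_function_E} one at a time.
\begin{itemize}
\item The terms $\frac12\|u_t\|^2+\frac{1-\tilde k}{2}\|\sqrt A u\|^2+\frac12\int_0^\infty k\|\sqrt A\eta^t\|^2$ are bounded by $\frac12\|U(t)\|^2_{\hat{\He}}$.
\item By \eqref{cdt_psi_1}, and since $h_1(\|\sqrt A u(t)\|)<\frac{1-\tilde k}{2}$ along the solution (established in the proof of Theorem \ref{visco_Stability}), we get $|\psi_1(u(t))|\le \frac{1}{2}\|U(t)\|^2_{\hat{\He}}$.
\item For the delayed source term, \eqref{visco_estimate_psi2}, which holds globally by the iteration, gives $\|\nabla\psi_2(u(s))\|^2\le \|\sqrt A u(s)\|^2\le (1-\tilde k)^{-1}\|U(s)\|^2$.
\item Similarly, $\|B^*u_t(s)\|^2\le b\|U(s)\|^2$.
\end{itemize}

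The two history integrals over $[t-\tau,t]$ are the only delicate point. For $t\ge\tau$ we use the decay bound at times $s\in[t-\tau,t]$, where $e^{-(\hat\omega-\omega')s}\le e^{(\hat\omega-\omega')\tau}e^{-(\hat\omega-\omega')t}$. Together with \eqref{cdt_alpha} for the $\alpha$-weighted integral, $\int_{t-\tau}^t|\alpha(s+\tau)|ds\le\sigma$, this gives a bound of the form $c\,(\tau+b\sigma)K^2e^{-(\hat\omega-\omega')t}$. For $t\in[0,\tau)$ the integrals also involve the initial history on $[-\tau,0]$, but $\hat E(t)$ is bounded there, for instance by $C^*_T\hat E(0)$ from \eqref{visco_eq_E_bound}. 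That bound can be absorbed into $\tilde C$ by multiplying by $e^{(\hat\omega-\omega')\tau}$.

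Collecting these estimates gives $\hat E(t)\le\tilde C e^{-(\hat\omega-\omega')t}$, with $\tilde C$ depending on $K$, $\hat E(0)$, $\tau$, $\sigma$, $b$ and $\tilde k$. The main care needed is to check that the pointwise bounds on $\psi_1$ and $\nabla\psi_2$ obtained in the iterative proof of Theorem \ref{visco_Stability} really hold for all $t\ge0$ and not just on $[0,T]$. This is the case because each step of the iteration re-establishes \eqref{visco_quasi} and the $h_1$ smallness on $[nT,(n+1)T]$.
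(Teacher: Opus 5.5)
Your proposal is correct and follows the paper's route. The paper presents this theorem as a direct consequence of Theorem \ref{visco_Stability} (a global solution with $\|U(t)\|_{\hat{\He}}\le C_\rho$) and Theorem \ref{visco_th_stability} (norm decay at a rate of at least $\frac{\hat\omega-\omega'}{2}$); your term-by-term comparison of $\hat E$ with $\|U\|^2_{\hat{\He}}$, including the history integrals handled via \eqref{cdt_alpha} and the bound on $[0,\tau)$, supplies the details the paper leaves implicit. (Your closing concern about revisiting the iteration is unnecessary: the pointwise bounds on $\psi_1$ and $\nabla\psi_2$ follow directly from (A2)--(A3) together with the uniform bound $\|\sqrt{A}u(t)\|\le C_\rho/\sqrt{1-\tilde k}$.)
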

\section{Examples}\label{expls_sec}
We now present some concrete examples that fall within the abstract frameworks developed in Sections \ref{linear_dam_sec} and \ref{visco_dam_sec}. In each case, we show how the abstract operators and assumptions are realized, and we discuss the physical interpretation of the models.
\subsection{Euler--Bernoulli beam with Duffing nonlinearities
and power-type delayed force}\label{ex:beam-duffing}

Let \(\Omega = (0,L)\). Consider the following beam equation
\begin{equation} \label{beam}
\left\{
\begin{array}{ll}
u_{tt} + u_{xxxx} + c u_t + \alpha(t) b(x) u_t(x,t-\tau) = \mu |u|^{p-2}u + \nu |u(t-\tau)|^{q-2}u(t-\tau), \\
(x,t) \in (0,L) \times (0,+\infty), \\
u(x,t-\tau) = f_0(x,t-\tau), \quad u_t(x,t-\tau) = g_0(x,t-\tau), \quad t \in [0,\tau],\\
u(0,t) = u_x(0,t) = u(L,t) = u_x(L,t) = 0, \quad t > 0,
\end{array}
\right.
\end{equation}
with \(u_0 := f_0(0)\), \(u_1 := g_0(0)\), \(p,q > 2\), \(\mu,\nu > 0\), \(c > 0\), and \(b \in L^\infty(0,L)\)  {satisfies $b(x)\ge 0$ a.e. $x\in\Omega.$}

The above system falls into the form \eqref{p} with \(H = L^2(0,L)\), \(A = \partial_x^4\) and \(D(A) = H^4(0,L) \cap H_0^2(0,L)\). Moreover, \(D(A^{\frac{1}{2}}) = H_0^2(0,L)\).

Here \(C = c^{1/2}I\), \(B\) is the multiplication operator by {\(b(x)^{1/2}\),} and the nonlinearities are given by
\[
\psi_1(u) = \frac{\mu}{p}\int_0^L |u|^p\,dx, \qquad
\psi_2(u) = \frac{\nu}{q}\int_0^L |u|^q\,dx.
\]

The Fr\'{e}chet derivatives are
\begin{equation}
\nabla \psi_1(u) = \mu |u|^{p-2}u, \qquad \nabla \psi_2(u) = \nu |u|^{q-2}u.
\end{equation}

Let us consider the functionals
\begin{equation}
\psi_i(u) := \frac{\mu_i}{p_i} \int_0^L |u|^{p_i} dx, \quad \forall u \in H_0^2(0,L), \quad i = 1,2,
\end{equation}

with \(\mu_1 = \mu, \mu_2 = \nu\), \(p_1 = p, p_2 = q\). By Sobolev's embedding theorem \(H_0^2(0,L) \hookrightarrow L^\infty(0,L)\), the functionals \(\psi_i\) are well-defined for all \(p_i > 2\). Also, the Gateaux derivative of \(\psi_i\) at any point \(u \in H_0^2(0,L)\) is

\begin{equation}
D\psi_i(u)(v) = \mu_i \int_0^L |u|^{p_i-2} u v dx,
\end{equation}

for all \(v \in H_0^2(0,L)\). Moreover, as in the general setting, if \(p_i > 2\), then \(\psi_i\) satisfies the assumptions (A1), (A2), (A3).
In order to reformulate \eqref{beam} as an abstract first-order equation, we introduce the Hilbert space \(\mathcal{H} = H_0^2(0,L) \times L^2(0,L)\) endowed with the inner product

\begin{equation}
\left\langle \begin{pmatrix} u_1 \\ v_1 \end{pmatrix}, \begin{pmatrix} u_2\\ v_2 \end{pmatrix} \right\rangle_{\mathcal{H}} := \int_0^L \nabla^2 u_1 \nabla^2 u_2 \, dx + \int_0^L  v_1  v_2 \, dx.
\end{equation}

We set \(U = (u, u_t)^T\). Then, \eqref{beam} can be rewritten in the form
\begin{equation}
U'(t) = \A U(t) + \alpha(t)\B U(t-\tau) + F(U(t), U(t - \tau)),
\end{equation}
where
\begin{equation}
\A \begin{pmatrix} u \\ u_t \end{pmatrix}  = \begin{pmatrix} u_t\\ -u_{xxxx} -cu_t \end{pmatrix}, \qquad \B  \begin{pmatrix} u \\ u_t \end{pmatrix} = \begin{pmatrix}  0 \\- b(x)u_t \end{pmatrix},
\end{equation}
with domain
\begin{equation}
D(\A) = \{(u,v) \in H_0^2(0,L) \times H_0^2(0,L) : u \in H^4(0,L) \cap H_0^2(0,L)\},
\end{equation}
and
\begin{equation}
 F(U(t), U(t - \tau)) = \begin{pmatrix}
0 \\
\mu |u|^{p-2}u + \nu |u(t - \tau)|^{q-2}u(t - \tau)
\end{pmatrix}.
\end{equation}
Let define the energy functional as follows 
\begin{eqnarray*} 
E(t) &= &\frac{1}{2} \int_0^L |u_t(x,t)|^2 dx + \frac{1}{2} \int_0^L |u_{xx}(x,t)|^2 dx - \frac{\mu}{p} \int_0^L |u(x,t)|^p dx \\
& & + \frac{\nu^2}{2} \int_{t-\tau}^t \int_0^L |u(x,s)|^{2q-2} dx \, ds + \frac{1}{2} \int_{t-\tau}^t |\alpha(s+\tau)| \int_0^L b(x) |u_t(x,s)|^2 dx \, ds.
\end{eqnarray*}
This models the active vibration control of a clamped flexible beam (robot arm, aeroelastic panel) made of a nonlinearly elastic material (Duffing restoring force \(\mu |u|^{p-2}u\)), equipped with a delayed distributed actuator with gain \(b(x)\) and latency \(\tau\). The term \(\nu |u(t-\tau)|^{q-2}u(t-\tau)\) represents a nonlinear delayed restoring force (e.g. reaction of a nonlinear spring measured with delay).

{
	Thus, Theorem \ref{last} ensures
	that for sufficiently small initial data
	solutions to \eqref{beam} are globally defined and satisfy an exponential decay estimate provided that \eqref{cdt_alpha_2} is satisfied. }
\subsection{Viscoelastic wave equation with seismic isolation,
sine source and van der Pol delayed force}

Let \(\Omega \subset \mathbb{R}^n\) be a bounded domain. Consider
\begin{equation} \label{wave_visco}
\left\{
\begin{array}{ll}
u_{tt} - \Delta u + \int_{0}^{+\infty} k(s) \Delta u(t-s) ds + \alpha(t) \mathbf{1}_{\chi} u_t(t-\tau) = \sin u + \mu u^3(t-\tau) - \lambda u(t-\tau), \\
(x,t) \in \Omega \times (0,+\infty), \\
u = 0 \quad \text{on } \partial\Omega \times (0,+\infty), \\
u(t-\tau) = f_0(t-\tau), \quad t \in (-\infty, \tau], \\
u_t(t-\tau) = g_0(t-\tau), \quad t \in [0, \tau],
\end{array}
\right.
\end{equation}
with \(u_0 := f_0(0)\) and \(u_1 := g_0(0)\), where: \(\chi \subset \Omega\) is the control region, \(k(t) = k_0 e^{-\zeta t}\) is the memory kernel with \(\tilde{k} = \frac{k_0}{\zeta} < 1\), \(\mu, \lambda > 0\), \(\mathbf{1}_{\chi}\) is the characteristic function of \(\chi\) and \(B = \mathbf{1}_{\chi}\) (multiplication operator).

The system \eqref{wave_visco} can be recast in the abstract framework \eqref{visco_p} with the choice of \(H = L^2(\Omega)\), \(A = -\Delta\) and \(D(A) = H^2(\Omega) \cap H_0^1(\Omega)\). Moreover, \(D(A^{\frac{1}{2}}) = H_0^1(\Omega)\).

Here \(B = \mathbf{1}_{\chi}\) (the multiplication operator by the characteristic function of \(\chi\)), and the nonlinearities are given by
\begin{equation}
\psi_1(u) = \int_{\Omega} (1 - \cos u) dx, \qquad \psi_2(u) = \frac{\mu}{4} \int_{\Omega} u^4 dx - \frac{\lambda}{2} \int_{\Omega} u^2 dx.
\end{equation}
The Fr\'{e}chet derivatives are
\begin{equation}
\nabla \psi_1(u) = \sin u, \qquad \nabla \psi_2(u) = \mu u^3 - \lambda u.
\end{equation}
Let us consider the functionals
\begin{equation}
\psi_1(u) := \int_{\Omega} (1 - \cos u) dx, \quad \forall u \in H_0^1(\Omega),
\end{equation}
and
\begin{equation}
\psi_2(u) := \frac{\mu}{4} \int_{\Omega} u^4 dx - \frac{\lambda}{2} \int_{\Omega} u^2 dx, \quad \forall u \in H_0^1(\Omega).
\end{equation}

By Sobolev's embedding theorem \(H_0^1(\Omega) \hookrightarrow L^6(\Omega)\) for \(n \leq 3\) (physical dimension), the functionals are well-defined. Also, the Gateaux derivatives are

\begin{equation}
D\psi_1(u)(v) = \int_{\Omega} \sin u \cdot v \, dx, \quad D\psi_2(u)(v) = \int_{\Omega} (\mu u^3 - \lambda u) v \, dx,
\end{equation}

for all \(v \in H_0^1(\Omega)\). Moreover, as in the general setting, \(\psi_1\) satisfies (A1)-(A3), while \(\psi_2\) satisfies (A1)-(A3) under a smallness condition on \(\lambda\).

Let us define \(\eta_s^t\) as in \eqref{def_eta}. Then, system \eqref{wave_visco} can be rewritten as follows
\begin{multline} \label{wave_visco_rewritten}
\qquad u_{tt}(x,t) - (1 - \tilde{k})\Delta u(x,t) - \displaystyle\int_0^{+\infty} k(s)\Delta \eta^t(x,s) ds + \alpha(t) \mathbf{1}_{\chi} u_t(x,t-\tau) \\
  = \sin u(x,t) + \mu u^3(x,t-\tau) - \lambda u(x,t-\tau), \qquad
\end{multline}
for all $(x,t) \in \Omega \times (0,+\infty)$, with
\begin{equation}
\left\{
\begin{array}{ll}
\eta_t^t(x,s) = -\eta_s^t(x,s) + u_t(x,t), & \quad (x,s,t) \in \Omega \times (0,+\infty) \times (0,+\infty), \\
u(x,t) = 0, & \quad (x,t) \in \partial\Omega \times (0,+\infty), \;  \\
 \eta^t(x,s) = 0, & \quad  (x,s) \in \partial\Omega \times (0,+\infty), \; t \geq 0, \\
u(x,0) = u_0(x) := f_0(x,0), \quad u_t(x,0) = u_1(x) := g_0(x,0), & \quad  x \in \Omega, \\
\eta^0(x,s) = \eta_0(x,s) := f_0(x,0) - f_0(x,-s), & \quad (x,s) \in \Omega \times (0,+\infty), \\
u_t(x,t) = g_0(x,t), & \quad (x,t) \in \Omega \times [-\tau, 0].
\end{array}
\right.
\end{equation}

In order to reformulate \eqref{wave_visco} as an abstract first-order equation, we introduce the Hilbert space \(L_k^2((0,+\infty); H_0^1(\Omega))\) endowed with the inner product
\begin{equation}
\langle \phi, \varphi \rangle_{L_k^2((0,+\infty); H_0^1(\Omega))} := \int_{\Omega} \left( \int_0^{+\infty} k(s) \nabla \phi(x,s) \nabla \varphi(x,s) ds \right) dx,
\end{equation}
and the Hilbert space
\begin{equation}
\hat{\He} = H_0^1(\Omega) \times L^2(\Omega) \times L_k^2((0,+\infty); H_0^1(\Omega)).
\end{equation}
We set \(U = (u, u_t, \eta^t)\). Then, \eqref{wave_visco_rewritten} can be rewritten in the abstract form \eqref{visco_P_abstract}, where
\begin{equation}
\hat{\A}
\begin{pmatrix} u \\ v \\ w \end{pmatrix}
=
\begin{pmatrix}
v \\
(1 - \tilde{k}) \Delta u + \displaystyle\int_0^{+\infty} k(s) \Delta w(s) \, ds \\
- w_s + v
\end{pmatrix},
\end{equation}
with domain
\begin{equation}
D(\hat{\A}) = \bigg\{ (u,v,w) \in H_0^1(\Omega) \times H_0^1(\Omega) \times L_k^2((0,+\infty); H_0^1(\Omega)) : \bigg.
\end{equation}
\begin{equation}
\bigg. (1 - \tilde{k})u + \int_0^{+\infty} k(s)w(s)ds \in H^2(\Omega) \cap H_0^1(\Omega), \; w_s \in L_k^2((0,+\infty); H_0^1(\Omega)) \bigg\},
\end{equation}
and
{\begin{equation}
\hat{\B}
\begin{pmatrix} u \\ v \\ w \end{pmatrix}
=
\begin{pmatrix}
0 \\
- \mathbf{1}_{\chi} v \\
0
\end{pmatrix}, \qquad  \hat{\Phi}(t-\tau) = (f_0(t-\tau), g_0(t-\tau), \eta^0)^T \text{ in } [0, \tau],
\end{equation}
and nonnlinear term }
\begin{equation}
\hat{F}(U(t), U(t-\tau))
=
\begin{pmatrix}
0 \\
\sin u(t) + \mu u^3(t-\tau) - \lambda u(t-\tau) \\
0
\end{pmatrix}.
\end{equation}

The energy functional is given by

\begin{eqnarray*} \label{energy_visco_example_5} 
\hat{E}(t) &:= & \frac{1}{2} \int_{\Omega} |u_t(x,t)|^2 dx + \frac{1 - \tilde{k}}{2} \int_{\Omega} |\nabla u(x,t)|^2 dx - \int_{\Omega} (1 - \cos u(x,t)) dx \\
& &  + \frac{1}{2} \int_0^{+\infty} k(s) \int_{\Omega} |\nabla \eta^t(x,s)|^2 dx \, ds  + \frac{1}{2} \int_{t-\tau}^t \int_{\Omega} |\mu u(x,s)^3 - \lambda u(x,s)|^2 dx \, ds \\
& &  + \frac{1}{2} \int_{t-\tau}^t |\alpha(s+\tau)| \int_{\chi} |u_t(x,s)|^2 dx \, ds.
\end{eqnarray*}
{Thus, Theorem \ref{last_visco} ensures
that for sufficiently small initial data
 solutions to \eqref{wave_visco} are globally defined and satisfy an exponential decay estimate provided that \eqref{visco_cdt_alpha_2} is satisfied. }
 
\subsection{Viscoelastic plate with exponential source
and power delayed force}\label{ex:visco-plate}
Let \(\Omega \subset \mathbb{R}^2\) be a bounded domain, \(H = L^2(\Omega)\), \(A = \Delta^2\), \(D(A) = H^4(\Omega) \cap H_0^2(\Omega)\), so that \(D(\sqrt{A}) = H_0^2(\Omega)\). Consider
\begin{equation} \label{plate}
u_{tt} + \Delta^2 u - \displaystyle\int_{0}^{+\infty} k(s) \Delta^2 u(t-s) ds + \alpha(t) \mathbf{1}_{\chi} u_t(t-\tau) 
= (e^u - 1) + \nu |u(t-\tau)|^{q-2}u(t-\tau), 
\end{equation}
for all $(x,t) \in \Omega \times (0,+\infty)$, with
\begin{equation}
\left\{
\begin{array}{ll}
u=\Delta u = 0, & \quad  (x,t) \in \partial\Omega \times (0,+\infty), \\
u(t-\tau) = f_0(t-\tau), &\quad t \in (-\infty, \tau], \\
u_t(t-\tau) = g_0(t-\tau), & \quad t \in [0, \tau],
\end{array}
\right.
\end{equation}
with \(u_0 := f_0(0)\) and \(u_1 := g_0(0)\), where: \(\chi \subset \Omega\) is the control region, \(k(t) = k_0 e^{-\zeta t}\) is the memory kernel with \(\tilde{k} = \frac{k_0}{\zeta} < 1\), \(q > 2\), \(\nu > 0\), \(\mathbf{1}_{\chi}\) is the characteristic function of \(\chi\) and \(B = \mathbf{1}_{\chi}\) (multiplication operator).
Let us consider the functionals
\begin{equation}
\psi_1(u) := \int_{\Omega} (e^u - u - 1) dx, \quad \forall u \in H_0^2(\Omega),
\end{equation}
and
\begin{equation}
\psi_2(u) := \frac{\nu}{q} \int_{\Omega} |u|^q dx, \quad \forall u \in H_0^2(\Omega).
\end{equation}
By Sobolev's embedding theorem \(H_0^2(\Omega) \hookrightarrow C^0(\overline{\Omega})\) for \(n \leq 3\), the functionals \(\psi_1\) and \(\psi_2\) are well-defined for \(q > 2\). The Gateaux derivatives of \(\psi_1\) and \(\psi_2\) at any point \(u \in H_0^2(\Omega)\) are
\begin{equation}
D\psi_1(u)(v) = \int_{\Omega} (e^u - 1) v \, dx, \qquad D\psi_2(u)(v) = \nu \int_{\Omega} |u|^{q-2} u v \, dx,
\end{equation}
for all \(v \in H_0^2(\Omega)\). Moreover, as in the general setting, \(\psi_1\) satisfies (A1)-(A3) under the smallness condition on the initial data, while \(\psi_2\) satisfies (A1)-(A3).

{Thus, Theorem \ref{last_visco} ensures
	that for sufficiently small initial data
	solutions to \eqref{plate} are globally defined and satisfy an exponential decay estimate provided that \eqref{visco_cdt_alpha_2} is satisfied. }

\end{document}